\newtheorem{theo}{Theorem}[section]
\newtheorem{coro}{Corollary}[section]
\newtheorem{lemm}{Lemma}[section]
\newtheorem{main}{Main Theorem}
\theoremstyle{definition}
\newtheorem{defi}{Definition}[section]
\theoremstyle{remark}
\newtheorem{rema}{Remark}[section]
\begin{document}

\title{\large{\textbf{Ricci flow with bounded curvature integrals}}}

\author{Shota Hamanaka\thanks{supported in doctoral program in Chuo University, Japan.}}

\date{}

\maketitle

\begin{abstract}
In this paper, we study the Ricci flow on a closed manifold and finite time interval $[0,T)~(T < \infty)$
on which certain integral curvature energies are finite.
We prove that in dimension four, such flow converges to a smooth Riemannian manifold
except for finitely many orbifold singularities.
We also show that in higher dimensions, the same assertions hold for a closed Ricci flow satisfying another conditions of integral curvature bounds.
Moreover, we show that such flows can be extended over $T$
by an orbifold Ricci flow.
\end{abstract}

\section{Introduction}
~~In this paper, we will consider extension problem for the Ricci flow on a closed manifold.
Given a $n$-manifold $M~(n \ge 2),$ a family of smooth Riemannian metrics $g(t)$ on $M$ is called a Ricci flow
(introduced by Hamilton in \cite{hamilton1982three}) 
on the time interval $[0,T) \subset \mathbb{R}$ 
if it satisfies 
\[
\frac{\partial g(t)}{\partial t} = - 2\,\mathrm{Ric}_{g(t)},~~\mathrm{for~all}~t \in [0,T),
\]
where $\mathrm{Ric}_{g(t)} = (R_{ij})$ denotes the Ricci curvature tensor of $g(t).$
One hopes that the Ricci flow will deform any Riemannian metric to some canonical metric, such as Einstein metric.
Hamilton \cite{hamilton1982three} first proved that for any smooth initial metric, the flow will exist uniquely for a short time 
and his proof was simplified by DeTurck \cite{deturck1983deforming}.
The next immediate question is the maximal existence time for the Ricci flow with respect to the initial metric.
In \cite{hamilton2formation}, Hamilton proved that
a Ricci flow on a closed manifold develops a singularity at a finite time $T$
(i.e., $T$ is the maximal existence time of the flow)
if and only if
the maximum of the norm $|\mathrm{Rm}| = \left( R_{ijkl} \cdot R^{ijkl} \right)^{1/2}$ of the Riemannian curvature tensor $\mathrm{Rm} = \mathrm{Rm}_{g(t)} = (R_{ijkl})$ blows up at $T.$
{\v{S}}e{\v{s}}um \cite{vsevsum2005curvature} proved that indeed a bound on the Ricci curvature rather than the full Riemannian curvature tensor sufficies to extend the closed Ricci flow.
A different approach was adopted by Wang \cite{wang2008conditions}
and consists of considering integral bounds rather that point-wise ones.
In this paper, we also consider a closed Ricci flow with some integral bounds.
Here, we state our two main results.
First one is the following.
\begin{main}[cf.~{\cite[Corollary~1.11]{bamler2017heat}}]
\label{maintheo}
Let $(M^{4}, g(t))_{t \in [0,T)}~(T < \infty)$ be a 4-dimensional closed (i.e., $M$ is smooth compact and connected manifold without boundary) Ricci flow satisfying
\[
(*)~~~\big| \big| \sup_{M} |R_{g(t)}| \big| \big|_{L^{1}([0, T))} \le C < +\infty
\]
for some positive constant $C,$ where $R_{g(t)}$ denotes the scalar curvature of $g(t).$
Then there exists a positive constsnat $\varepsilon = \varepsilon(M, g(0), T)$ such that the following holds :
~$(*)_{p_{0}, \varepsilon}$~~For fixed $p_{0} > 2,$ assume that there exists $r > 0$ such that 
\[
\sup_{t \in [0,T)}~|| R_{g(t)} ||_{L^{p_{0}}(B(x, r, t))} \le \varepsilon
\]
for all $x \in M,$ where $B(x,r,t)$ denotes the geodesic open ball centered at $x$ of radius $r$ with respect to $g(t).$
Then $(M, g(t))$ converges to an orbifold in the smooth Cheeger-Gromov sense.
More specifically, we can find a decomposition $M = M^{\mathrm{reg}} \bigcup M^{\mathrm{sing}}$
with the following properties:

\noindent
(1)~$M^{\mathrm{reg}}$ is open and connected in $M$,

\noindent
(2)~$M^{\mathrm{sing}}$ is a zero set with respect to the Riemannian volume measure
$dvol_{g(t)}$ for all $t \in [0,T),$

\noindent
(3)~$g(t)$ smoothly converges to a Riemannian metric $g_{T}$ on $M^{\mathrm{reg}}$ as $t \rightarrow T.$

\noindent
(4)~$(M^{\mathrm{reg}}, g_{T})$ can be compactified to a metric space $(\Bar{M}^{\mathrm{reg}}, \bar{d})$
by adding finitely many points and the differentiable structure on $M^{\mathrm{reg}}$ can be extended to a smooth orbifold structure on $\Bar{M}^{\mathrm{reg}}$ 
such that the orbifold singularities are of cone type,

\noindent
(5)~Around every orbifold singularity of $(\Bar{M}^{\mathrm{reg}}, \bar{d})$ the metric $g_{T}$
satisfies 
\[
| \nabla^{m} \mathrm{Rm} | < o (\rho^{-2 -m})~\mathrm{and}~| \nabla^{m} \mathrm{Ric} | < O (\rho^{-1 -m -\frac{2}{p_{0}}})
~\mathrm{as}~\rho \rightarrow 0,
\]
where $\rho$ denotes the distance to the singularity.
Furtheremore, for every $\varepsilon > 0$ we can find a smooth orbifold metric $\bar{g}_{\varepsilon}$ on $\Bar{M}^{\mathrm{reg}}$ such that the following holds:
\[
|| g_{T} - \bar{g}_{\varepsilon} ||_{C^{0} (M^{\mathrm{reg}}, \bar{g}_{\varepsilon})}
+ || g_{T} - \bar{g}_{\varepsilon} ||_{W^{2,2} (M^{\mathrm{reg}}, \bar{g}_{\varepsilon})} < \varepsilon.
\]
Here, the $C^{0}$ and $W^{2,2}$-norms are taken with respect to $\bar{g}_{\varepsilon}.$
\end{main}
\begin{rema}
(1)~In Main Theorem \ref{maintheo}, the metric space as limit of the flow is a singular space in the sense of {\cite[Definition~1.16]{bamler2020structure}} (it follows that the space is also length space by the same argument in {\cite[Proof~of~Theorem~6.7]{simon2015extending}}. See also {\cite[Theorem~2.4.16]{burago2001course}}).

\noindent
(2)~Bamler and Zhang \cite{bamler2017heat} proved that the similar result in Main Theorem \ref{maintheo}
under the uniformly bounded scalar curvature assumption (see {\cite[Corollary~1.11]{bamler2017heat}} for more detail).
Later, Bamler \cite{bamler2018convergence} generalized this result to higher dimensions.
The assumption $(b)$ in Main Theorem \ref{maintheo} admits the possibility that the scalar curvature
blows up at most $(T -t)^{- \alpha}~(0 < \alpha < 1)$-rate.
So the assumption $(b)$ may be weaker than one in \cite{bamler2017heat}.
Note also that, as  far as we know, there are no examples that satisfy the assumptions and
have some singularities as in Main Theorem \ref{maintheo}, {\cite[Corollary~1.11]{bamler2017heat}} or {\cite[Theorem~1.1]{bamler2018convergence}}.
On the other hand, at least in dimension $4 \le n \le 7,$
it has been conjectured that a bound on the scalar curvature could potentially be sufficient to extend the flow
(see \cite{buzano2020local}).

\end{rema}
Second one is the following.
\begin{main}[cf.~{\cite[Theorem~A]{ye2008curvature}}]
\label{maintheo2}
Let $(M^{n}, g(t))_{t \in [0,T)}~(T < \infty)$ be a $n$-dimensional ($n \ge 5$) closed Ricci flow satisfying $(*)$
in Main Theorem \ref{maintheo}.
Then there exists a positive constsnat $\varepsilon = \varepsilon(M, g(0), n, T)$ such that the following holds :
Suppose that 
\[
\sup_{t \in [0,T)}~|| \mathrm{Rm}_{g(t)} ||_{L^{n/2}(M)} < +\infty
\]
and $(*)_{p_{0}, \varepsilon}$ for some $p_{0} > n/2$ in Main Theorem \ref{maintheo} holds.
Then the assertions (1)-(5) as in Main Theorem \ref{maintheo} hold.
\end{main}

\begin{rema}
In {\cite[Theorem~A]{ye2008curvature}}, Ye proved an extension theorem under a smallness assumption on the $(n/2, +\infty)$-mixed norm. But we know nothing about the relationship between the constant $\delta_{0}$ in {\cite[Theorem~A]{ye2008curvature}} and the constant $\varepsilon$ in Main Theorem \ref{maintheo2}.
See also related work by Chen and Wang \cite{chen2012space}.
\end{rema}

In the rest of this section, we shall explain backgrounds.
As stated above, Wang \cite{wang2008conditions} considered integral bounds rather that point-wise ones.
First one consists of integral bound of the Riemannian curvature tensor:
\begin{theo}[{\cite[Theorem~1.1]{wang2008conditions}}]
\label{wang1}
Let $(M^{n}, g(t))_{t \in [0,T)}~(T < + \infty)$ be a closed Ricci flow.
If
\[
||\mathrm{Rm} (\cdot, t)||_{L^{\alpha} (M \times [0,T))} < + \infty,~~\alpha \ge \frac{n+2}{2},
\]
then this flow can be extended smoothly over $T.$
\end{theo}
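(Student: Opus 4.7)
~By the Hamilton criterion recalled in the introduction, it suffices to show that $\sup_{M\times[0,T)}|\mathrm{Rm}|<+\infty$. Setting $u=|\mathrm{Rm}|$, the standard evolution of $|\mathrm{Rm}|^2$ under Ricci flow and Kato's inequality give, weakly,
\[
\partial_t u \le \Delta_{g(t)} u + c_n u^2.
\]
This is a heat-type inequality with quadratic reaction, and under the parabolic rescaling $(x,t)\mapsto(\lambda x,\lambda^2 t)$ one has $u\mapsto\lambda^{-2}u$, so $\|u\|_{L^{(n+2)/2}(M\times[0,T))}$ is scale-invariant. The critical Lebesgue exponent that forces $L^\infty$ regularity is thus exactly $\alpha_c=(n+2)/2$, matching the hypothesis. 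My target is therefore a parabolic $\varepsilon$-regularity statement
\[
\iint_{Q_{2\rho}} u^{(n+2)/2}\,dvol_{g(t)}\,dt < \varepsilon_0 \ \ \Longrightarrow\ \ \sup_{Q_\rho} u \le C\rho^{-2},
\]
where $Q_\rho=B(x_0,\rho,t_0)\times[t_0-\rho^2,t_0]$ is a parabolic cylinder.

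To run the Moser/De~Giorgi iteration that produces this $\varepsilon$-regularity, I need a Sobolev inequality whose constants are uniform for $t\in[0,T)$. Since $M$ is closed and $T<\infty$, monotonicity of Perelman's $\mu$-functional along the flow yields a uniform lower bound $\mu(g(t),\tau_0)\ge -K(M,g(0),T)$, and by the work of Q.~Zhang and R.~Ye this is equivalent to
\[
\Bigl(\int_M |\varphi|^{\frac{2n}{n-2}}\,dvol_{g(t)}\Bigr)^{\frac{n-2}{n}} \le A\int_M|\nabla\varphi|^2\,dvol_{g(t)} + B\int_M \varphi^2\,dvol_{g(t)},
\]
with $A,B$ depending only on $(M,g(0))$ and $T$. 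Testing the inequality for $u$ against $u^{p-1}\eta^2$ with space-time cutoffs $\eta$, integrating by parts, absorbing the reaction term using the smallness of $\|u\|_{L^{(n+2)/2}(Q_{2\rho})}$ via Hölder, and iterating on nested cylinders gives the desired $\varepsilon$-regularity.

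For the supercritical range $\alpha>(n+2)/2$ the smallness hypothesis $\iint_{Q_{2\rho}}u^{(n+2)/2}<\varepsilon_0$ is automatic on any sufficiently small parabolic cylinder by Hölder, so the Moser iteration yields a direct $L^\infty$ bound. The borderline case $\alpha=(n+2)/2$ is handled by absolute continuity of the finite global integral: pick $\delta>0$ with $\iint_{M\times[T-\delta,T)}u^{(n+2)/2}<\varepsilon_0$, then cover this slab by finitely many cylinders of a uniform size (using compactness of $M$) and apply the $\varepsilon$-regularity on each. In either case $|\mathrm{Rm}|$ is uniformly bounded on $M\times[0,T)$, so by Hamilton's theorem the flow extends smoothly past~$T$.

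The hard part is the critical case $\alpha=(n+2)/2$: the scaling-invariant global bound cannot by itself yield a pointwise estimate, and one must route the argument through $\varepsilon$-regularity and absolute continuity of the integral near the singular time. The secondary technical obstacle is securing a Sobolev inequality with $t$-uniform constants without imposing any further curvature hypothesis, and this is precisely what Perelman's entropy monotonicity provides in the closed, finite-time setting.
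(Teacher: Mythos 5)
The paper does not reprove this statement---it simply quotes Wang \cite{wang2008conditions}---and your Moser-iteration/$\varepsilon$-regularity argument, using the evolution inequality $\partial_t|\mathrm{Rm}|\le\Delta|\mathrm{Rm}|+c_n|\mathrm{Rm}|^2$, a $t$-uniform Sobolev inequality derived from Perelman's entropy, and absolute continuity of the space-time integral at the critical exponent $\alpha=(n+2)/2$, is essentially Wang's own proof. One small caveat worth flagging: the uniform Sobolev inequality actually at your disposal (Ye's Theorem~D$^{*}$, restated here as Lemma~\ref{sobolev}) carries the scalar-curvature term $\tfrac{R}{4}u^{2}$ on the right-hand side, and the integration by parts under Ricci flow also produces a volume-form drift term involving $-R\,dvol_{g(t)}$; both are of the same order as the reaction $c_n u^{2}$ and must be absorbed in the same H\"older/iteration step rather than dropped, but since $|R|\le c_n|\mathrm{Rm}|$ this causes no additional difficulty and the structure of your argument is unaffected.
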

Second one consists of the integral bound of sclar curvature and uniformly Ricci curvature bound from below:
\begin{theo}[{\cite[Theorem~1.2]{wang2008conditions}}]
\label{wang2}
Let $(M^{n}, g(t))_{t \in [0,T)}~(T < +\infty)$ be a closed Ricci flow.
If
\[
(1)~\mathrm{Ric} (x,t) \ge -A \cdot g(x,t),~~\mathrm{for~all}~(x,t) \in M \times [0,T),~\mathrm{for~some}~A \in \mathbb{R},
\]
\[
(2)~|| R (\cdot, t) ||_{L^{\alpha} (M \times [0,T))} < + \infty,~~\alpha \ge \frac{n+2}{2},
\]
then this flow can be extended smoothly over $T.$
\end{theo}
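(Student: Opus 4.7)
The plan is to reduce Theorem~\ref{wang2} to Theorem~\ref{wang1} by upgrading the scalar-integral hypothesis to an $L^\alpha$-bound on the full Riemannian curvature tensor under the uniform Ricci lower bound.

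The first step is purely algebraic. If the eigenvalues of $\mathrm{Ric}_{g(t)}$ are $\lambda_1,\dots,\lambda_n$ with $\lambda_i \ge -A$, then the shifted eigenvalues $\mu_i := \lambda_i + A \ge 0$ satisfy $\sum \mu_i^2 \le \bigl(\sum \mu_i\bigr)^2 = (R + nA)^2$. Expanding yields the pointwise estimate
\[
|\mathrm{Ric}|^2 \;\le\; R^2 + 2(n-1)AR + n(n-1)A^2,
\]
so $|\mathrm{Ric}| \le |R| + C(n,A)$. Because $R \ge -nA$, Gronwall applied to $\partial_t \mathrm{dvol}_{g(t)} = -R\,\mathrm{dvol}_{g(t)}$ gives $\mathrm{vol}_{g(t)}(M) \le e^{nAT}\mathrm{vol}_{g(0)}(M)$ on $[0,T)$. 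Combining these with hypothesis~$(2)$ produces $\|\mathrm{Ric}\|_{L^\alpha(M\times[0,T))} < +\infty$.

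The main step is to promote this into $\|\mathrm{Rm}\|_{L^\alpha(M\times[0,T))} < +\infty$; once that is done, Theorem~\ref{wang1} applies verbatim. I would use the Kato-type inequality
\[
\partial_t |\mathrm{Rm}| \;\le\; \Delta |\mathrm{Rm}| + C(n)\,|\mathrm{Rm}|^2
\]
for a Moser / De Giorgi iteration on $|\mathrm{Rm}|^{\alpha/2}$: multiplying by $|\mathrm{Rm}|^{\alpha-1}$ and integrating over $M$ produces a cubic term $\int |\mathrm{Rm}|^{\alpha+1}$ that can be reorganized, using the structure of the Ricci-flow evolution of $\mathrm{Rm}$, as $\int |\mathrm{Ric}|\,|\mathrm{Rm}|^{\alpha}$ plus a Weyl self-interaction. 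The first piece is controlled by H\"older against the $L^\alpha$-bound on $\mathrm{Ric}$ just established; the second, critical piece is absorbed by the gradient term $\int |\nabla |\mathrm{Rm}||^2$ via parabolic Sobolev, whose critical exponent in $n{+}1$ spacetime dimensions is precisely $(n+2)/2$ --- exactly the threshold imposed on $\alpha$ in the hypothesis. An alternative, more geometric route is a blowup argument: supposing $\|\mathrm{Rm}\|_{L^\alpha}$ infinite, rescale at curvature-maximizing points; Perelman's $\kappa$-noncollapsing on $[0,T)$ with $T<\infty$ provides a nontrivial pointed Hamilton limit which, by the already-established Ricci bounds, is a complete ancient flow with $R \equiv 0$ and $\mathrm{Ric} \ge 0$, hence $\mathrm{Ric} \equiv 0$; this Ricci-flat limit satisfies $|\mathrm{Rm}|(0,0) = 1$ yet inherits infinite $L^\alpha$-mass concentration from the original sequence, contradicting the standard $\varepsilon$-regularity for Ricci-flat spaces.

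The principal obstacle is the absorption step: in dimension $n \ge 4$ the Weyl part of $\mathrm{Rm}$ is not algebraically controlled by $\mathrm{Ric}$, so the cubic self-interaction of $\mathrm{Rm}$ must be genuinely absorbed by the parabolic Sobolev inequality at the critical exponent $\alpha = (n+2)/2$. Checking that the noncollapsing and bounded-volume estimates coming from $\mathrm{Ric} \ge -A$ are uniform up to $t = T$, so that the iteration does not degenerate as one approaches the singular time, is the technical heart of the argument.
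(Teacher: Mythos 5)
The paper does not prove this theorem; it is quoted verbatim from Wang with a citation, so there is no in-paper proof to compare against and I assess your argument on its own merits. Your opening algebraic reduction is correct and is the right first move: with $\mu_i := \lambda_i + A \ge 0$ one gets $|\mathrm{Ric} + Ag|^2 = \sum \mu_i^2 \le \bigl(\sum\mu_i\bigr)^2 = (R+nA)^2$, hence $|\mathrm{Ric}| \le |R| + C(n,A)$ and $R \ge -nA$, giving the volume bound and $\|\mathrm{Ric}\|_{L^\alpha(M\times[0,T))} < \infty$.

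The gap is the main step. You aim to produce $\|\mathrm{Rm}\|_{L^\alpha(M\times[0,T))} < \infty$ so that Theorem~\ref{wang1} applies, but a Moser iteration on $|\mathrm{Rm}|^{\alpha/2}$ cannot start without an a priori $L^\alpha$ bound on the quantity being iterated, and the hypotheses give no integral control on $\mathrm{Rm}$ at all. Worse, the quadratic reaction term in the evolution of $\mathrm{Rm}$ is genuinely $\mathrm{Rm}\ast\mathrm{Rm}$: in dimension $n\ge 4$ the Weyl tensor self-interacts with no Ricci factor, so there is no ``reorganization'' into $|\mathrm{Ric}|\,|\mathrm{Rm}|^\alpha$ plus an absorbable piece. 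You flag exactly this obstacle at the end but do not resolve it. Your blowup variant has the same hole: the rescaled limit is a complete, $\kappa$-noncollapsed, static Ricci-flat metric with $|\mathrm{Rm}|=1$ at the basepoint, and such spaces exist (Eguchi--Hanson, for instance). Since nothing in the hypotheses descends to control $L^{n/2}$ or $L^\alpha$ Weyl mass in the limit, there is no $\varepsilon$-regularity or gap theorem to invoke, hence no contradiction.

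The correct object to iterate is the scalar curvature itself, for which the hypothesis does supply the seed. Setting $u := R + nA \ge 0$, the evolution $\partial_t R = \Delta R + 2|\mathrm{Ric}|^2$ together with the pointwise bound $|\mathrm{Ric}|^2 \le (R+nA)^2 + nA^2$ (which, crucially, uses the Ricci lower bound to control the reaction term by $R$ alone) gives $\partial_t u \le \Delta u + 2u^2 + C$. Since $u \in L^\alpha(M\times[0,T))$ with $\alpha \ge (n+2)/2$, the parabolic Moser iteration is supercritical (or critical) and yields $\|R\|_{L^\infty(M\times[0,T))} < \infty$. Then $|\mathrm{Ric}| \le |R| + C$ gives a uniform Ricci bound, and {\v{S}}e{\v{s}}um's theorem \cite{vsevsum2005curvature} extends the flow smoothly past $T$. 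No control on $\mathrm{Rm}$ is needed, and Theorem~\ref{wang1} is never used.
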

\begin{rema}
In Theorem \ref{wang1}, \ref{wang2}, let $\alpha = + \infty,$
one can recover Hamilton's \cite{hamilton2formation} and {\v{S}}e{\v{s}}um's \cite{vsevsum2005curvature} results.
\end{rema}
Recently, Di Matteo \cite{matteo2020mixed} generalized these results using mixed integral norms.
\begin{theo}[{\cite[Theorem~1.2]{matteo2020mixed}}]
\label{matteo1}
Let $(M^{n}, g(t))_{t \in [0,T)}~(T < +\infty)$ be a smooth Ricci flow
such that $(M, g(t))$ is complete and has bounded curvature for every $t$ in $[0,T).$
Assume that the initial slice $(M, g(0))$ satisfies $\mathrm{inj} (M, g(0)) > 0.$
Assume also that
\[
\big| \big| || \mathrm{Rm} (\cdot, t) ||_{L^{\alpha}(M)} \big| \big|_{L^{\beta}([0,T))} < +\infty
\]
for some pair $(\alpha, \beta) \in (1, +\infty) \subset \mathbb{R}$ with
\[
\alpha \ge \frac{n}{2} \frac{\beta}{\beta-1}.
\]
Then this flow can be extended smoothly over $T.$
\end{theo}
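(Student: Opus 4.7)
The plan is to argue by contradiction, exploiting the fact that the hypothesis $\alpha\ge\frac{n}{2}\frac{\beta}{\beta-1}$ is precisely scale-subcritical (or scale-critical) for the $L^\beta_t L^\alpha_x$ norm of $|\mathrm{Rm}|$ under parabolic rescaling. Suppose $T$ is the maximal existence time. Completeness together with bounded curvature at each time slice makes the Hamilton--\v{S}e\v{s}um extension criterion available, so $\sup_{M\times[0,T)}|\mathrm{Rm}|=+\infty$; in particular $\lim_{t\to T}\sup_M|\mathrm{Rm}_{g(t)}|=+\infty$. Pick $t_k\nearrow T$ and $x_k\in M$ with $Q_k:=|\mathrm{Rm}_{g(t_k)}|(x_k)\to\infty$ and, by a standard Perelman--Hamilton parabolic point-picking argument, arrange in addition that $|\mathrm{Rm}|\le 2Q_k$ on the parabolic cylinder $B_{g(t_k)}\bigl(x_k,r_kQ_k^{-1/2}\bigr)\times[t_k-r_k^2Q_k^{-1},t_k]$ for some $r_k\to\infty$.

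Next, rescale parabolically: set $g_k(s):=Q_k\,g\bigl(t_k+s/Q_k\bigr)$ based at $x_k$. Then $|\mathrm{Rm}_{g_k}|(x_k,0)=1$ and $|\mathrm{Rm}_{g_k}|\le 2$ on parabolic balls of arbitrarily large fixed size. Perelman's $\kappa$-non-collapsing theorem applies on such bounded-curvature cylinders and yields $\kappa>0$ depending only on $n$, $T$ and $g(0)$ via $\mathrm{inj}(M,g(0))>0$; combined with a Cheeger--Gromov--Taylor estimate this provides a uniform positive lower bound on $\mathrm{inj}_{g_k(0)}(x_k)$. Hamilton's compactness theorem then extracts a subsequential smooth pointed Ricci flow limit $(M_\infty,g_\infty(s),x_\infty)$ on a nontrivial parabolic neighbourhood of $(x_\infty,0)$, still satisfying $|\mathrm{Rm}_{g_\infty}|(x_\infty,0)=1$.

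The crux is then a scaling identity: for any precompact parabolic cylinder $V\times[-\tau,0]\subset M_\infty\times[-\tau,0]$ eventually contained in the domain of the rescaled flows, a direct computation gives
\begin{equation*}
\bigl\|\,\|\mathrm{Rm}_{g_k}(\cdot,s)\|_{L^\alpha(V)}\bigr\|_{L^\beta([-\tau,0])}
= Q_k^{-\delta}\cdot\bigl\|\,\|\mathrm{Rm}_g(\cdot,t)\|_{L^\alpha(\widetilde V_{k,t})}\bigr\|_{L^\beta(\widetilde I_k)},
\end{equation*}
where $\delta:=1-\tfrac{n}{2\alpha}-\tfrac{1}{\beta}$ and $\widetilde V_{k,t}\times\widetilde I_k\subset M\times[0,T)$ is the preimage parabolic cylinder, which shrinks to the single point $(x_k,t_k)$ in both space and time as $k\to\infty$. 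The hypothesis $\alpha\ge\frac{n}{2}\frac{\beta}{\beta-1}$ is exactly $\delta\ge 0$. In the strictly subcritical range $\delta>0$ the prefactor $Q_k^{-\delta}\to 0$ while the right-hand mixed norm is dominated by the globally finite one; in the critical case $\delta=0$ the prefactor is $1$ but the right-hand norm itself tends to $0$ by absolute continuity of the $L^\beta_t$ integrand on sets shrinking to a null set. Either way the left side tends to $0$; so the mixed norm of $|\mathrm{Rm}_{g_\infty}|$ vanishes on every precompact parabolic cylinder around $(x_\infty,0)$, forcing $\mathrm{Rm}_{g_\infty}\equiv 0$ there, which contradicts $|\mathrm{Rm}_{g_\infty}|(x_\infty,0)=1$.

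The main obstacle is not the scaling identity itself, which is essentially bookkeeping, but the compactness step in a non-compact complete ambient: one must check that Perelman's no-local-collapsing and the ensuing Hamilton compactness go through without a global compactness hypothesis on $M$, using only $\mathrm{inj}(M,g(0))>0$ and the time-by-time bounded-curvature assumption. Once this input is in hand, the proof reduces to the contradiction outlined above.
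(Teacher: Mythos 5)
The paper does not prove this statement; it is cited verbatim as {\cite[Theorem~1.2]{matteo2020mixed}} and used only as background motivation, so there is no in-paper proof to compare against. You are therefore effectively reconstructing Di~Matteo's argument. Your outline matches the standard Wang/Di~Matteo blow-up strategy, and the central scaling identity is computed correctly: with $g_k(s)=Q_kg(t_k+s/Q_k)$ one has $|\mathrm{Rm}_{g_k}|=Q_k^{-1}|\mathrm{Rm}_g|$, $dvol_{g_k}=Q_k^{n/2}dvol_g$, $ds=Q_k\,dt$, giving the prefactor $Q_k^{-\delta}$ with $\delta=1-\tfrac{n}{2\alpha}-\tfrac{1}{\beta}$, and the hypothesis $\alpha\ge\tfrac{n}{2}\tfrac{\beta}{\beta-1}$ is exactly $\delta\ge 0$. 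The split into the subcritical case ($Q_k^{-\delta}\to 0$, RHS dominated by the finite global norm) and the critical case ($\delta=0$, RHS vanishing by absolute continuity of the $L^\beta_t$ integral over time intervals shrinking to $\{T\}$) is sound.

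The genuine gap is the one you correctly flag but do not close: Perelman's $\kappa$-non-collapsing theorem as usually quoted is for closed manifolds, and here $M$ is merely complete with bounded curvature at each fixed time (the bound is not assumed uniform in $t$; indeed uniformity is exactly what is being contradicted). Invoking ``Perelman's theorem applies on such bounded-curvature cylinders'' is not enough on its own. To run your compactness step you must either cite a non-compact extension of Perelman's entropy monotonicity and no-local-collapsing (for instance along the lines of Chau--Tam--Yu, using $\mathrm{inj}(M,g(0))>0$ together with the time-slice-wise bounded curvature to control the reduced volume and the $\mathcal{L}$-geometry), or replace it with a local non-collapsing argument that only uses the mixed-norm bound and the initial injectivity radius. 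Until that is supplied, the Cheeger--Gromov--Taylor injectivity radius bound and the ensuing Hamilton compactness extraction are not justified, and the contradiction does not go through. Everything downstream of the non-collapsing step is fine; this one input is what needs a precise reference or proof.
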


\begin{theo}[{\cite[Theorem~1.3]{matteo2020mixed}}]
\label{matteo2}
Let $(M^{n}, g(t))_{t \in [0,T)}~(T < +\infty)$ be a smooth Ricci flow
such that $(M, g(t))$ is complete and has bounded curvature for every $t$ in $[0,T).$
Assume that the initial slice $(M, g(0))$ satisfies $\mathrm{inj} (M, g(0)) > 0.$
Assume also that the following conditions hold:
\[
(1)~\mathrm{Ric}(x,t) \ge -A \cdot g(x,t)~~\mathrm{for~all}~(x,t) \in M \times [0,T),
\]
\[
(2)~\big| \big| || R (\cdot, t) ||_{L^{\alpha}(M)} \big| \big|_{L^{\beta}([0,T))} < +\infty
\]
for some $A \in \mathbb{R}$ and pair $(\alpha, \beta) \in (1, +\infty) \subset \mathbb{R}$ with
\[
\alpha \ge \frac{n}{2} \frac{\beta}{\beta - 1}.
\]
Then this flow can be extended smoothly over $T.$
\end{theo}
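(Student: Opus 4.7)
The plan is to argue by contradiction: assume $T$ is the maximal existence time, so by {\v{S}}e{\v{s}}um's extension theorem $\limsup_{t\to T}\|\mathrm{Ric}(\cdot,t)\|_{L^\infty(M)} = +\infty$. The hypothesis $(1)$, $\mathrm{Ric} \geq -Ag$, implies both the metric comparison $g(0) \leq g(t) \leq e^{2AT}g(0)$ (with volume measures comparable) and the pointwise inequality $|\mathrm{Ric}|^2 \leq n(R + nA)^2$; it therefore suffices to produce a uniform bound on $\sup_M R(\cdot,t)$, for then $\|\mathrm{Ric}\|_{L^\infty}$ is bounded and {\v{S}}e{\v{s}}um's criterion yields a contradiction. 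I would further use $\mathrm{inj}(M,g(0))>0$, bounded curvature on each slice, and Perelman's $\mu$-entropy monotonicity (or, in the complete non-compact setting, a Hebey-type argument combined with the metric comparison above) to obtain a time-independent Sobolev inequality
\[
\Bigl(\int_M f^{\frac{2n}{n-2}}\,dV_{g(t)}\Bigr)^{\frac{n-2}{n}} \leq C \int_M \bigl(|\nabla f|^2 + f^2\bigr)\,dV_{g(t)}, \qquad t\in[0,T).
\]

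Next I would execute parabolic Moser iteration on $u := R_+ + 1$, which by $\partial_t R = \Delta R + 2|\mathrm{Ric}|^2$ and the Ricci lower bound satisfies $\partial_t u \leq \Delta u + Cu^2$ in the viscosity/distributional sense. Multiplying by $u^{p-1}$ for $p\geq 2$, integrating against $dV_{g(t)}\,dt$, and using the evolution $\partial_t dV_{g(t)} = -R\,dV_{g(t)}$ yields the standard energy inequality
\[
\sup_{s\in[0,T)}\int_M u^{p}\,dV_{g(s)} + \int_{0}^{T}\!\!\int_M \bigl|\nabla u^{p/2}\bigr|^2\,dV\,dt \leq C(p)\Bigl(\int_M u^p\,dV_{g(0)} + \int_{0}^{T}\!\!\int_M u^{p+1}\,dV\,dt\Bigr).
\]
The right-hand integral I would control by first splitting $u^{p+1}=u\cdot u^p$, applying H\"older in space with exponents $\alpha$ and $\alpha/(\alpha-1)$, and then H\"older in time with exponents $\beta$ and $\beta/(\beta-1)$:
\[
\int_0^T\!\!\int_M u^{p+1}\,dV\,dt \leq \big\|\|R_+\|_{L^\alpha(M)}\big\|_{L^\beta([0,T))} \cdot \Big\|\|u^{p/2}\|^2_{L^{\frac{2\alpha}{\alpha-1}}(M)}\Big\|_{L^{\frac{\beta}{\beta-1}}([0,T))}.
\]
The second factor is then estimated via the parabolic Sobolev embedding $L^{\infty}_t L^{2}_x \cap L^{2}_t H^{1}_x \hookrightarrow L^{q}_t L^{r}_x$ applied to $u^{p/2}$, with $q = 2\beta/(\beta-1)$ and $r = 2\alpha/(\alpha-1)$. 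A direct scaling computation shows that the required embedding exponent identity $\frac{2}{q}+\frac{n}{r} = \frac{n}{2}$ is precisely $\frac{n}{2\alpha}+\frac{1}{\beta} = 1$, so the hypothesis $\alpha\geq\frac{n}{2}\cdot\frac{\beta}{\beta-1}$ is exactly what is needed for the interpolation to close against the two quantities on the left.

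The main obstacle is the critical/limiting case $\frac{n}{2\alpha}+\frac{1}{\beta} = 1$, where there is no scaling slack and the absorption constant in front of the nonlinearity is not automatically small. I would handle this by exploiting absolute continuity of the mixed-norm integral: partition $[0,T)$ into finitely many subintervals $[t_j,t_{j+1})$ on each of which $\|\|R\|_{L^\alpha(M)}\|_{L^\beta([t_j,t_{j+1}))}$ is smaller than a prescribed $\varepsilon(p)$; on each subinterval the smallness converts into the absorption constant required in the energy inequality, and the iteration runs from a uniform $L^p$ bound at $t_j$ obtained inductively in $j$. Running the Moser iteration with $p_k = \bigl(\tfrac{n}{n-2}\bigr)^k p_0$ on each subinterval, monitoring the polynomial growth of $C(p)$ against the geometric Sobolev gain, and patching across the partition yields a uniform bound $\sup_{[0,T)\times M} R \leq C<\infty$. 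By the Ricci lower bound the full Ricci tensor is then uniformly bounded, contradicting {\v{S}}e{\v{s}}um's criterion and proving that the flow extends smoothly over $T$.
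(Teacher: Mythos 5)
This theorem is stated in the paper purely as background, cited verbatim from Di~Matteo (and ultimately generalizing Wang's Theorem~1.2); the paper does not supply its own proof. That said, the strategy you propose --- establish a uniform Sobolev inequality along the flow, run parabolic Moser iteration on the scalar curvature, close the nonlinearity with a mixed-norm H\"older/parabolic interpolation whose exponent arithmetic reproduces $\alpha \geq \tfrac{n}{2}\tfrac{\beta}{\beta-1}$, and handle the critical case by cutting $[0,T)$ into pieces of small mixed norm --- is indeed the standard route in this literature, so in outline you have recovered the method of the cited sources.

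However, there is a concrete gap in the step you delegate to the metric comparison. From $\partial_t g = -2\,\mathrm{Ric}$ and $\mathrm{Ric} \geq -A g$ you get $\partial_t g \leq 2A g$ and hence only the \emph{upper} bound $g(t) \leq e^{2At}g(0)$; a lower bound $g(0) \leq g(t)$ would require $\mathrm{Ric} \leq 0$, which is not assumed. Without a two-sided comparison the Hebey-type transfer of the Sobolev inequality from $g(0)$ to $g(t)$ does not go through, because $|\nabla f|^2_{g(t)}\,dV_{g(t)}$ is not controlled by $|\nabla f|^2_{g(0)}\,dV_{g(0)}$ in both directions. In the closed case the way around this is Ye's Sobolev inequality under Ricci flow (Lemma~\ref{sobolev}), but that estimate carries the extra term $A\int_M \tfrac{R}{4} u^2\,dvol$, which cannot simply be discarded when $R > 0$; absorbing it is precisely where the $L^\alpha$-in-space bound on $R$ enters (via H\"older and interpolation between $L^2$ and $L^{2n/(n-2)}$), and your proposal never closes that loop. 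A secondary issue is the choice $u := R_+ + 1$: Di~Matteo's theorem allows complete non-compact $M$, where $\|1\|_{L^\alpha(M)}$ and $\int_M u^p\,dV_{g(0)}$ may be infinite, so you would need $u = R_+$ (or a cutoff) and a justification of the integration by parts in the energy inequality on a non-compact manifold. None of these are fatal to the overall strategy, but as written the Sobolev step is not established, and the remainder of the Moser scheme rests on it.
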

In the present paper, we will consider a closed $n$-dimensional Ricci flow under the conditions 
correspoinding to $(\alpha, \beta)=$``$(+\infty, 1)$'' and ``$(n/2, +\infty)$''
in the above Theorem \ref{matteo1} or \ref{matteo2}.
These conditions are listed as follows :
Let $(M^{n}, g(t))_{t \in [0,T)}~(T < +\infty)$ be a closed $n$-dimensional Ricci flow
satisfying the following $(A)-(B)$ or (a)-(c):

\[
(A)~\sup_{t \in [ 0, T )} \big| \big| |\mathrm{Rm}|_{g(t)} \big| \big|^{n/2}_{L^{n/2}(M)} \le A_{1},
\]
\[
(B)~\big| \big| \sup_{M} |\mathrm{Rm}|_{g(t)} \big| \big|_{L^{1}([0, T))} \le A_{2}
\]
for some positive constants $A_{1}, A_{2},$ or
\[
(a)~\sup_{t \in [ 0, T )} || R_{g(t)} ||^{n/2}_{L^{n/2}(M)} \le C_{1}
\]
for some positive constant $C_{1},$
\[
(b)~\big| \big| \sup_{M} |R_{g(t)}| \big| \big|_{L^{1}([0, T))} \le C_{2}
\]
for some positive constant $C_{2},$
and
\[
(c)~\mathrm{Ric} (g(t)) \ge - C_{3} \cdot g(t)
\]
for some constant $C_{3}.$

\begin{rema}
\label{rema1}
(1)~Under the assumption (c), the condition (b) implies the following condition $(b')$ :
\[
(b')~\big| \big| \sup_{M} |\textrm{Ric}_{g(t)}| \big| \big|_{L^{1}([0, T))} \le C_{4} := n (C_{2} + (n-1)C_{3}).
\]

\noindent
(2)~As in {\cite[Example]{wang2008conditions}},
under the condition (a) only,
we can not expect that such flow can be extended smoothly over $T.$
The same example also shows that
we can not expect that Ricci flow satisfying (a) and 
\[
\big| \big| \log ( \sup_{M} |R_{g(t)}|) \big| \big|_{L^{1}([0, T))} < +\infty
\]
can be extended smoothly over $T.$
\end{rema}
As mentioned above, 
$T < +\infty$ is the maximal exsitence time if and only if
\[
\limsup_{t \rightarrow T} |\mathrm{Rm}_{t}|_{t} = +\infty.
\] 
Moreover, this condition is equivalent to 
\[
\sup_{M} |\mathrm{Rm}_{t}|_{t} \ge \frac{1}{8(T -t)}~~\forall t \in [0,T) 
\]
from the evolution of $|\mathrm{Rm}|^{2}$ under the Ricci flow and Hamilton's long-existence theorem
(see {\cite[Corollary~7.7]{chow2008ricci}}).
This condition implies that
\[
\big| \big| \sup_{M} |\mathrm{Rm}|_{g(t)} \big| \big|_{L^{1}([0,T))} = +\infty.
\]
Therefore, if $(M^{n}, g(t))_{t \in [0,T)}~(T < +\infty)$
satisfies $(B),$ then the flow can be smoothly extended over $T.$
Hence we will consider the following weaker conditions $(A), (B')$ :
\[
(A)~\sup_{t \in [ 0, T )} \big| \big| |\mathrm{Rm}|_{g(t)} \big| \big|^{n/2}_{L^{n/2}(M)} \le A_{1},
\]
\[
(B')~\big| \big| \sup_{M} |\mathrm{Ric}|_{g(t)} \big| \big|_{L^{1}([0, T))} \le A_{2}.
\]
However, under the assumption $(B')$ (or equivalently $(b')$), the flow can be extended smoothly over $T$
by the following result (He \cite{he2014remarks}) :
\begin{theo}[{\cite[Theorem~1.1]{he2014remarks}}]
If a closed Ricci flow $(M, g(t))_{t \in [0,T)}$ satisfies
\[
\big| \big| \sup_{M} |\mathrm{Ric}|_{g(t)} \big| \big|_{L^{1}([0, T))} < +\infty,
\]
then the flow can be extended smoothly over $T.$
\end{theo}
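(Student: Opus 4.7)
The plan is a parabolic blow-up contradiction argument, very much in the spirit of {\v{S}}e{\v{s}}um's and Wang's extension theorems. Suppose for contradiction that the flow cannot be extended smoothly past $T$. By Hamilton's characterization recalled just above the statement, $\sup_{M} |\mathrm{Rm}|_{g(t)} \to \infty$ as $t \to T$.

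First I would establish that under the hypothesis the metrics $g(t)$ are uniformly equivalent on $[0,T)$. From $|\partial_{t} g|_{g} \le 2|\mathrm{Ric}|_{g}$, integrating along the flow yields
\[
e^{-2 \| \sup_{M} |\mathrm{Ric}|_{g(\cdot)} \|_{L^{1}}} g(s) \le g(t) \le e^{2 \| \sup_{M} |\mathrm{Ric}|_{g(\cdot)} \|_{L^{1}}} g(s)
\]
for $0 \le s \le t < T$. Consequently distances, volumes and diameters are uniformly comparable, so Perelman's no local collapsing theorem gives a uniform lower bound on the injectivity radius at any fixed scale (with respect to the rescaled flows below).

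Next I would do standard point-picking: choose $(x_{k}, t_{k})$ with $t_{k} \to T$ and $K_{k} := |\mathrm{Rm}|_{g(t_{k})}(x_{k}) \to \infty$ essentially realizing the maximum on $M \times [0, t_{k}]$, then parabolically rescale $\tilde g_{k}(\tau) := K_{k}\, g(t_{k} + \tau / K_{k})$ on $[-K_{k} t_{k}, 0]$. The key observation is that the hypothesis is parabolically scale-invariant: $|\mathrm{Ric}|_{\tilde g_{k}} = K_{k}^{-1} |\mathrm{Ric}|_{g}$ and $d\tau = K_{k}\, dt$, so
\[
\int_{-K_{k} t_{k}}^{0} \sup_{M} |\mathrm{Ric}|_{\tilde g_{k}}(\tau)\, d\tau = \int_{0}^{t_{k}} \sup_{M} |\mathrm{Ric}|_{g}(s)\, ds \le C.
\]
With $|\mathrm{Rm}|_{\tilde g_{k}} \le 1$ (up to a harmless factor), $|\mathrm{Rm}|_{\tilde g_{k}}(x_{k}, 0) = 1$, and the non-collapsing, Hamilton's compactness theorem produces a smooth complete ancient limit Ricci flow $(M_{\infty}, g_{\infty}(\tau))_{\tau \in (-\infty, 0]}$ satisfying $|\mathrm{Rm}_{g_{\infty}}| \le 1$, $|\mathrm{Rm}_{g_{\infty}}|(x_{\infty}, 0) = 1$, and $\int_{-\infty}^{0} \sup_{M_{\infty}} |\mathrm{Ric}_{g_{\infty}}|(\tau)\, d\tau \le C$.

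The hard step is ruling out such a limit. The $L^{1}$ bound over an infinite time interval, combined with the uniform curvature bound (which via the reaction equation $(\partial_{\tau} - \Delta)|\mathrm{Ric}|^{2} \le C|\mathrm{Rm}||\mathrm{Ric}|^{2}$ prevents $\sup_{M_{\infty}}|\mathrm{Ric}_{g_{\infty}}|$ from oscillating too rapidly), should force $\mathrm{Ric}_{g_{\infty}} \equiv 0$; then $g_{\infty}(\tau)$ is the stationary flow on a Ricci-flat manifold. To finally contradict $|\mathrm{Rm}_{g_{\infty}}|(x_{\infty}, 0) = 1$, one invokes Perelman's pseudolocality theorem: pick $\tau_{j} \to -\infty$ with $\sup_{M_{\infty}} |\mathrm{Ric}_{g_{\infty}}|(\tau_{j}) \to 0$, pull this back to the unrescaled flow, and conclude that $|\mathrm{Rm}|_{g(t)}$ cannot grow to $K_{k}$ near $(x_{k}, t_{k})$. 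The delicate point throughout is the passage from an integrated smallness of $\mathrm{Ric}$ on the limit to genuine pointwise vanishing, for which bounded curvature of the limit is essential.
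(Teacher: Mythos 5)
The strategy you outline---metric equivalence from integrating $|\partial_t g|\le 2|\mathrm{Ric}|$, non-collapsing, point-picking, parabolic rescaling, and a Ricci-flat blow-up limit---is indeed the right skeleton, and your observation that the $L^1([0,T))$ Ricci bound is parabolically scale-invariant is correct and is the crux of the matter. However, there are two genuine gaps in the argument as written.

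First, your derivation of $\mathrm{Ric}_{g_\infty}\equiv 0$ does not follow from what you have. You assert that the bound $\int_{-\infty}^{0}\sup_{M_\infty}|\mathrm{Ric}_{g_\infty}|\,d\tau\le C$ together with bounded curvature ``should force'' $\mathrm{Ric}_{g_\infty}\equiv 0$, and you invoke the reaction equation for $|\mathrm{Ric}|^2$ to prevent oscillation. But a nonnegative integrable function on $(-\infty,0]$ need not vanish at $\tau=0$ (or anywhere): integrability only forces $\liminf_{\tau\to-\infty}\sup|\mathrm{Ric}|=0$. The reaction inequality cannot bridge this, because the drift term has the wrong sign to control $|\mathrm{Ric}|(0)$ from behavior at $\tau\to-\infty$. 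The correct (and cleaner) observation uses absolute continuity of the Lebesgue integral over shrinking intervals: for any fixed $\tau_0<0$,
\[
\int_{\tau_0}^{0}\sup_{M}|\mathrm{Ric}|_{\tilde g_k}(\tau)\,d\tau \;=\;\int_{\,t_k+\tau_0/K_k}^{\,t_k}\sup_{M}|\mathrm{Ric}|_{g}(s)\,ds\;\longrightarrow\;0
\]
as $k\to\infty$, because $t_k\to T$, $\tau_0/K_k\to 0$, and $\sup_M|\mathrm{Ric}|_g\in L^1([0,T))$. Passing to the local smooth Cheeger--Gromov--Hamilton limit then gives $\int_{\tau_0}^{0}|\mathrm{Ric}_{g_\infty}|(y,\tau)\,d\tau=0$ for every $y$ and every $\tau_0<0$, hence $\mathrm{Ric}_{g_\infty}\equiv 0$. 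Your version only yields a finite bound, not vanishing, and the ``oscillation'' heuristic is not a substitute.

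Second, the closing contradiction is not well-formed. Once $\mathrm{Ric}_{g_\infty}\equiv 0$, the limit flow is static, and picking $\tau_j\to-\infty$ with $\sup|\mathrm{Ric}_{g_\infty}|(\tau_j)\to 0$ is vacuous. More seriously, a complete, non-flat, Ricci-flat, $\kappa$-noncollapsed manifold with bounded curvature is \emph{not} automatically impossible (Eguchi--Hanson is one), so ``Ricci-flat and $|\mathrm{Rm}|(x_\infty,0)=1$'' is not by itself a contradiction, and Perelman's pseudolocality is a forward-in-time smoothing statement that does not apply in the direction you want. One needs an additional structural input---for instance, Perelman's $\mathcal{W}$-entropy or reduced-volume monotonicity to upgrade the limit to a gradient shrinking soliton (a Ricci-flat shrinker being the flat Gaussian), or some other mechanism specific to the point-picking---to actually rule the limit out. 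As written, that step is asserted rather than proved. These two issues---the missing absolute-continuity argument for $\mathrm{Ric}_{g_\infty}\equiv 0$ and the unjustified final contradiction---would need to be repaired before the proposal constitutes a proof.
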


\noindent
Hence we will consider the assumptions $(A), (b)$ or $(a), (b)$. 
Main Theorem \ref{maintheo} relates to $(a), (b)$ and Main Theorem \ref{maintheo2} relates to $(A), (b).$

In dimension two, 
there is no difference between scalar curvature and Ricci curvature.
Hence, from the above observation, 
closed two-dimensional Ricci flow satisfying (a) and (b) can be smoothly extended over $T.$
In dimension three,
we have Hamilton-Ivey pinching result({\cite[Theorem~10.2.1]{topping2006lectures}}).
Hence, from the above observation, 
closed three-dimensional Ricci flow satisfying (a) and (b) can be smoothly extended over $T.$
Therefore we will consider in dimension $\ge 4.$
Some results about closed Ricci flow on which the scalar curvature is uniformly bounded in space and time
are already known (see  \cite{simon2015extending}, \cite{bamler2017heat}, \cite{bamler2018convergence}, \cite{bamler2019heat}, \cite{buzano2020local}).
We will borrow ideas in \cite{bamler2017heat} in some respects of the proof of Main Theorems.

This paper is organized as follows.
In section 2, we firstly prepare some basic properties and secondly show necessary lemmas.
Finally, we will prove Main Theorem \ref{maintheo} by using these lemmas.
In section 3, we will prove Main Theorem \ref{maintheo2} by the same method used in the proof of Main Theorem \ref{maintheo}.
In section 4, we will prove that the flow considered in section 2, 3 can be extended by an orbifold Ricci flow.
Finally, in section 5, we will describe a well-known lemma which is necessary for the proof of Main Theorems.

\section{Under scalar curvature integral bounds}
We begin to discuss some basic estimates.
We will write the Riemannian volume measure of $g(t)$ by $vol_{g(t)}$ or $vol_{t}$ briefly.
\begin{lemm}[Volume bounds]
\label{volume}
Assume (b).
Then there exist positive constants $V_{1} = V_{1}(M, g(0), C_{2}),~V_{2} = V_{2}(M, g(0), T)$ such that the following holds
\[
0 < V_{1} \le \mathrm{Vol} (M, g(t)) \le V_{2}
\]
for all $t \in [0,T),$ where $\mathrm{Vol} (M, g(t))$ denotes the volume of $(M, g(t)).$
Here, $C_{2}$ denotes the upper bound in $(b)$ stated in section 1. 
\end{lemm}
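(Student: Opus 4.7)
The plan is to integrate the standard evolution equation of the Riemannian volume form along the Ricci flow. Since $\partial_t g(t) = -2\,\mathrm{Ric}_{g(t)}$ implies $\partial_t\, d\mathrm{vol}_{g(t)} = -R_{g(t)}\, d\mathrm{vol}_{g(t)}$, one has
\[
\frac{d}{dt} \mathrm{Vol}(M, g(t)) = -\int_M R_{g(t)}\, d\mathrm{vol}_{g(t)}.
\]
This identity is the starting point for both the lower and upper bound.

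For the lower bound $V_1$, I would bound the right-hand side crudely by $\sup_M |R_{g(t)}| \cdot \mathrm{Vol}(M, g(t))$, giving
\[
\left| \frac{d}{dt} \log \mathrm{Vol}(M, g(t)) \right| \le \sup_M |R_{g(t)}|.
\]
Integrating from $0$ to $t$ and applying assumption (b) yields
\[
\mathrm{Vol}(M, g(t)) \ge e^{-C_2}\, \mathrm{Vol}(M, g(0)) =: V_1,
\]
which depends only on $(M, g(0), C_2)$ as required.

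For the upper bound $V_2$ I would invoke the weak maximum principle applied to the evolution equation $\partial_t R_{g(t)} = \Delta_{g(t)} R_{g(t)} + 2|\mathrm{Ric}_{g(t)}|^2 \ge \Delta_{g(t)} R_{g(t)} + \tfrac{2}{n} R_{g(t)}^2$, which produces a one-sided bound $R_{g(t)} \ge -c_0$ for some $c_0 = c_0(\min_M R_{g(0)}, n, T) > 0$ valid on all of $[0,T)$. Plugging this into the volume identity,
\[
\frac{d}{dt} \mathrm{Vol}(M, g(t)) = -\int_M R_{g(t)}\, d\mathrm{vol}_{g(t)} \le c_0 \,\mathrm{Vol}(M, g(t)),
\]
and integrating gives $\mathrm{Vol}(M, g(t)) \le e^{c_0 T} \mathrm{Vol}(M, g(0)) =: V_2$, depending only on $(M, g(0), T)$.

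There is no genuine obstacle: both ingredients (the evolution of the volume form and the scalar-curvature lower bound via the maximum principle) are classical. The only care needed is to check that the one-sided bound on $R_{g(t)}$ suffices for the upper volume bound (so that the time integral of the negative part of $R$ can be controlled without appealing to $C_2$), thereby matching the parameter dependence stated in the lemma.
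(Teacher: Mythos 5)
Your proposal is correct and follows essentially the same route as the paper: integrate the volume evolution identity $\frac{d}{dt}\mathrm{Vol}(M,g(t)) = -\int_M R\,d\mathrm{vol}_{g(t)}$, use assumption (b) for the lower bound, and use a maximum-principle lower bound on $R$ for the upper bound. The only (cosmetic) difference is that you invoke the constant lower bound $R_{g(t)} \ge \min\{0, \inf_M R_{g(0)}\}$, whereas the paper uses the universal bound $\inf_M R_{g(t)} \ge -\tfrac{n}{2t}$; your choice is arguably cleaner because it avoids the singular integrand at $t=0$, but it yields the same parameter dependence for $V_1$ and $V_2$.
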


\begin{proof}
The proof is almost same as in {\cite[p.6~(3.8)]{simon2015extending}}.
To be self-contained, we will describe the proof without omitting.

We firstly prove the volume estimate from below.
Under the Ricci flow (see \cite{hamilton1982three}, \cite{simon2015extending}),
\[
\frac{d}{dt} \mathrm{Vol} (M, g_{t}) = - \int_{M} R_{g_{t}} dvol_{g_{t}}
\ge - (\sup_{M} |R_{g_{t}}|) \cdot \mathrm{Vol} (M, g_{t}) 
\]
Integrating this and from the assumption (b), we obtain
\[
\log \frac{\mathrm{Vol} (M, g_{t})}{\mathrm{Vol} (M, g_{0})} \ge - C_{2}.
\]
Hence we obtain that
\[
\mathrm{Vol} (M, g_{t}) \ge e^{- C_{2}} \mathrm{Vol} (M, g_{0}) > 0
\]
for all $t \in [ 0, T ).$

Next, we will prove the estimate from above.
By the evolution of the scalar curvature and maximum principle (see {\cite[Lemma~3.36]{bamler2020structure}}),
we obtain that 
\[
\begin{split}
\frac{d}{dt} \mathrm{Vol} (M, g_{t}) &= - \int_{M} R_{g_{t}} dvol_{g_{t}}
\le - (\inf_{M} R_{g_{t}}) \cdot \mathrm{Vol} (M, g_{t}) \\
\le \frac{n}{2t} \mathrm{Vol} (M, g_{t})
\end{split}
\]
Hence we obtain that 
\[
\mathrm{Vol} (M, g_{t}) \le e^{n/2}t \cdot \mathrm{Vol} (M, g_{0}) < e^{n/2}T \cdot \mathrm{Vol} (M, g_{0}).
\]
\end{proof}

\begin{lemm}[Non-inflating estimate]
\label{non-inflating}
Assume (a) and (b).
Then there exists a positive constant $\kappa_{1} = \kappa_{1}(M, g(0), n, T, C_{2}) > 0$ such that 
\[
\int_{B(x,t,r)}~dvol_{g(t)} =: \mathrm{Vol}_{t}~(B(x, t, r)) \le \kappa_{1} r^{n}
\]
for all $r \in (0, 1].$
Here, $C_{2}$ denotes the upper bound in $(b)$ stated in section 1.
\end{lemm}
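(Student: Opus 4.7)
The plan is to adapt the heat-kernel and entropy-based volume estimates of \cite{bamler2017heat} from the pointwise scalar-curvature setting to the integral one given by (a) and (b). First I would establish a pointwise comparison of the volume forms along the flow using (b), then derive a Gaussian upper bound on the conjugate heat kernel of the flow, and finally extract the non-inflating estimate from that.

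Step one is essentially free. Since $\partial_{t} dv_{g(t)} = -R_{g(t)} dv_{g(t)}$, integrating in $t$ gives $dv_{g(t)}(x) = \exp\bigl(-\int_{0}^{t} R_{g(s)}(x)\,ds\bigr)\,dv_{g(0)}(x)$, and by (b) the exponent is bounded in absolute value by $C_{2}$, so
\[
e^{-C_{2}}\,dv_{g(0)}(x) \;\le\; dv_{g(t)}(x) \;\le\; e^{C_{2}}\,dv_{g(0)}(x)
\]
pointwise on $M$, uniformly in $t \in [0, T)$. In particular $\mathrm{Vol}_{g(t)}(S) \le e^{C_{2}}\mathrm{Vol}_{g(0)}(S)$ for every Borel set $S \subset M$.

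Step two is the crux. Following the strategy of \cite{bamler2017heat}, I would study the evolution of Perelman's $\mathcal{W}$-entropy along the conjugate heat kernel $p(\cdot, \cdot\,;\,x_{0}, t_{0})$ of the flow. The resulting monotonicity picks up an error term of the form $\int R\, p\, dv$, which integrated in $t$ is controlled by $\int_{0}^{T} \sup_{M} |R_{g(s)}|\,ds \le C_{2}$. Equivalently, Perelman's $\mu$-functional admits a uniform lower bound $\mu(g(t), r^{2}) \ge -\Lambda$ for all $t \in [0, T)$ and $r^{2} \le 1$, with $\Lambda$ depending only on $M$, $g(0)$, $n$, $T$ and $C_{2}$. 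This in turn gives a Gaussian-type upper bound on $p$ with the same dependencies.

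Step three is then standard. The Gaussian upper bound on $p$ combined with the normalization $\int_{M} p\,dv_{s} = 1$ produces the on-diagonal lower bound $p(x_{0}, t_{0};\,x_{0}, t_{0} - r^{2}) \ge c\,r^{-n}$, and a Harnack-type propagation transfers this to a lower bound on the whole ball $B(x_{0}, t_{0}, r)$; integrating against $dv_{t_{0} - r^{2}}$ and using the volume-form comparison of step one yields $\mathrm{Vol}_{g(t_{0})}(B(x_{0}, t_{0}, r)) \le \kappa_{1} r^{n}$. The main obstacle is step two: the classical $\mathcal{W}$-entropy arguments used in \cite{bamler2017heat} rely on pointwise $|R|$ bounds, so the error terms must be carefully redone, tracking integrated quantities controlled by (b); assumption (a) serves as a backstop ensuring that the integrals arising in the entropy computation are finite.
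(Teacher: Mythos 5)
The overall shape of your proposal — heat-kernel lower bound plus normalization, with the scalar-curvature contributions controlled via $(b)$ — is indeed the same strategy the paper adopts (following Zhang's \cite{zhang2011bounds} Theorem~1.1(a)). Your Step~1 (volume form comparison) is correct, and you rightly identify that the only new ingredient needed to replace the pointwise $|R|$ bound is that $\int_0^T \sup_M|R|\,ds \le C_2$ controls the exponential factor.

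However, your Step~3 has a genuine circularity. You claim that a Gaussian upper bound on the conjugate heat kernel $p$ together with $\int_M p\,dv = 1$ yields the on-diagonal lower bound $p(x_0,t_0;x_0,t_0-r^2)\ge c\,r^{-n}$. What normalization plus an off-diagonal Gaussian tail actually gives you is a lower bound on $\max_{B} p$ of the form $\frac{1}{2\,\mathrm{Vol}(B(x_0,t_0-r^2,Cr))}$ — which only becomes $c\,r^{-n}$ \emph{if} you already have the volume upper bound $\mathrm{Vol}(B)\lesssim r^n$ that you are trying to prove. The correct and non-circular route (which is Zhang's Step~3 and what the paper adapts) is to use Perelman's reduced distance and the Li–Yau–Perelman Harnack estimate directly: pick a point $z$ with $l_{(x_0,t_0)}(z,t_0-r^2/2)\le n/2$ from Perelman's reduced-volume monotonicity, obtain $G(x_0,t_0-r^2;\,x,t_0)\ge c_1 J(t_0)\,r^{-n}\,e^{-2c_2}\,e^{-\frac{1}{r}\int_{t_0-r^2}^{t_0}\sqrt{t_0-s}\,R(x,s)\,ds}$ from the Harnack estimate, bound the exponent by $C_2$ via $(b)$, and then integrate $1 = \int_M G\,dv_{t_0}\ge \int_{B(x_0,t_0,r)} G\,dv_{t_0}$. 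Without the reduced distance / differential Harnack input you do not get the heat-kernel lower bound independently of the volume bound.

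Two smaller points: the lower bound you need is on a full ball at the later time slice $t_0$, not just on the diagonal, and this propagation is handled by the gradient estimate for $G$ (which you correctly anticipate as a ``Harnack-type propagation''). Also, assumption $(a)$ is not actually used in this lemma at all — the uniform Sobolev inequality of Ye (Lemma~\ref{sobolev}) needed in Zhang's Steps~1–2 depends only on $g(0)$, $n$, $T$, so your remark that $(a)$ ``serves as a backstop'' is unnecessary; only $(b)$ enters.
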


\begin{proof}
The proof of Zhang {\cite[Theorem 1.1 (a)]{zhang2011bounds}} in the uniformly bounded scalar curvature case
can be adapted to this case with some minor modifications.
We only indicate the necessary changes. 
Step 1 and 2 in it also hold in this situation 
(Note that they used the logarithmic Sobolev inequality under the Ricci flow in Step 2,
but since we can take $\varepsilon$ arbitrarily at that point, 
we do not need the uniform scalar curvature bound).
In Step 3,
we have to modify the arguments in the following 
(we will use the same notation as in \cite{zhang2011bounds}):

for $x \in M$ such that $d_{t_{0}} (x_{0}, x) \le r$ where $d_{t_{0}}$ denotes the natural distance function with respect to $g(t_{0}),$ 
\[
\begin{split}
G(x_{0}, t_{0} - r^{2} ; x, t_{0}) 
&\ge \frac{c_{1} J(t_{0})}{r^{n}} e^{- 2 c_{2}} 
e^{- \frac{1}{r} \int^{t_{0}}_{t_{0} - r^{2}} \sqrt{t_{0} - s} R(x, s) ds} \\
&\ge \frac{c_{1} J(t_{0})}{r^{n}} e^{- 2 c_{2}} 
e^{- \frac{1}{r} \sqrt{t_{0} - (t_{0} - r^{2})} \int^{t_{0}}_{t_{0} - r^{2}}  \sup_{M} |R(\bullet , s)| ds} \\
&\ge \frac{c_{1} J(t_{0})}{r^{n}} e^{- 2 c_{2}} 
e^{- C_{2}}.
\end{split}
\]
Here $G(y, s ; x,t)~(x, y \in M,~0 < s < t \le t_{0})$ denotes the conjugate heat kernel with respect to $(y,s) \in M \times (0,t)$ centered at $(x,t).$ 
Note that $G(y,s ; x,t) = K(x,t ; y,s),$ where $K(\cdot, \cdot ; y,s)$ denotes the heat kernel coupled with Ricci flow centered at $(y,s)$ as in the proof of Lemma \ref{distance2} below(see also \cite{bamler2020structure}, \cite{bamler2017heat}).
Hence we can continue the same arguments in the Zhang's proof and obtain that
\[
\mathrm{Vol}_{t} (B(x_{0}, t_{0}, r)) \le \kappa_{1} r^{n}
\]
for all $r \in (0, \sqrt{t_{0}}~].$
And we consider the parabollic rescaling flow $(\sqrt{t_{0}})^{-1} g (\sqrt{t_{0}} \cdot),$
then we can obtain the desired result since the volume growth estimate is invariant under the rescaling.
\end{proof}

\begin{lemm}[Diameter bounds]
Assume (a) and (b).
Then there exist positive constants $D_{1} = D_{1}(M, g(0), n, T, C_{2})$
and $D_{2} = D_{2}(M, g(0), n, T, C_{1}, C_{2})$ such that 
\[
0 < D_{1} \le \mathrm{diam} (M, g(t)) \le D_{2} < +\infty
\]
for all $t \in [0, T),$ where $\mathrm{diam} (M, g(t))$ denotes the diameter of $(M, g(t)).$
Here, $C_{1}, C_{2}$ denote respectively the upper bounds in $(a), (b)$ stated in section 1.
\end{lemm}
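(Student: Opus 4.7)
The plan is to prove the two inequalities separately. The lower bound will follow immediately from combining Lemmas~\ref{volume} and~\ref{non-inflating}, while the upper bound requires a distance distortion estimate obtained via heat kernel methods in the spirit of Bamler--Zhang~\cite{bamler2017heat} and the Simon paper~\cite{simon2015extending} that was invoked in Lemma~\ref{non-inflating}.

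For $D_1$, fix $t \in [0,T)$ and set $D := \operatorname{diam}(M,g(t))$. If $D \ge 1$ we are already done; otherwise $D \in (0,1]$, and for any choice of basepoint $x_0 \in M$ we have $M = B(x_0,t,D)$. Applying Lemma~\ref{volume} and then Lemma~\ref{non-inflating} to this single ball gives
\[
V_1 \le \operatorname{Vol}(M,g(t)) = \operatorname{Vol}_t\bigl(B(x_0,t,D)\bigr) \le \kappa_1 D^n,
\]
so $D \ge (V_1/\kappa_1)^{1/n}$. Setting $D_1 := \min\{1,(V_1/\kappa_1)^{1/n}\}$ finishes this direction.

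For $D_2$, I would derive a distance distortion estimate of the form
\[
d_t(x,y) \le C_\ast\, d_s(x,y) + C_\ast \sqrt{t-s}, \qquad 0 \le s \le t < T,
\]
with $C_\ast = C_\ast(M,g(0),n,C_1,C_2,T)$. The strategy is the same as that underlying Lemma~\ref{non-inflating}: the non-inflating volume estimate yields (via the standard Davies-type argument) an upper Gaussian bound on the heat kernel $K$ coupled with the Ricci flow, and a matching lower Gaussian bound is produced by a Perelman-type reduced-length computation. In that computation, the factor $\exp\bigl(-\tfrac{1}{r}\int_s^t \sqrt{t-\tau}\,\sup_M |R|\,d\tau\bigr)$ that already appeared in the proof of Lemma~\ref{non-inflating} is controlled by $\sqrt{T}\cdot C_2$ using hypothesis (b). Chaining the two Gaussians along a minimizing geodesic, as in~\cite{bamler2017heat}, produces the claimed distortion. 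Taking $s=0$ then yields $\operatorname{diam}(M,g(t)) \le C_\ast \operatorname{diam}(M,g(0)) + C_\ast \sqrt{T}$, which is finite since $M$ is closed and depends only on the listed constants; this serves as $D_2$.

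The main obstacle is the lower Gaussian heat kernel bound. In the bounded scalar curvature setting of Bamler--Zhang this follows rather directly from Perelman's reduced-volume monotonicity, but under the weaker hypotheses $(a)$ and $(b)$ one must verify two things: first, that $(a)$ provides enough control on the Sobolev/$\mu$-entropy constants along the flow to run Perelman's $\mathcal{L}$-length argument uniformly in $t$; and second, that the pointwise scalar bound typically used in the exponent can be replaced by the time-integral in $(b)$, as was done in the final step of the proof of Lemma~\ref{non-inflating}. Once the lower Gaussian is in hand, extracting the explicit off-diagonal distance dependence rather than merely the on-diagonal estimate requires some bookkeeping, but no new analytic input beyond what is already used for Lemmas~\ref{volume} and~\ref{non-inflating}.
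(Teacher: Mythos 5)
Your lower bound argument is correct and is essentially the one the paper uses (by reference to Simon's Lemma 3.5): if $D:=\operatorname{diam}(M,g(t))\le 1$ then $M$ is a single ball of radius $D$ and the non-inflating estimate together with the volume lower bound forces $D\ge (V_1/\kappa_1)^{1/n}$. This part is fine.

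For the upper bound your route is genuinely different from the paper's, and it has a real gap. You want a global-in-time distance distortion $d_t\le C_* d_s + C_*\sqrt{t-s}$ via an upper Gaussian bound (from non-inflating) paired with a lower Gaussian bound from Perelman's $\mathcal{L}$-length. The problem is that the lower heat-kernel bound needs a uniform non-collapsing estimate along the flow, and the paper is explicit that non-collapsing is \emph{not} available under (a) and (b) alone: the non-collapsing lemma (Lemma \ref{non-collapsing}) requires the smallness hypothesis $(a')$, namely $\|R\|_{L^{n/2}(B(x,r,t))}^{n/2}\le\varepsilon$ on balls of a fixed radius, which does not follow from the global bound (a). Ye's Sobolev inequality (Lemma \ref{sobolev}) controls the Sobolev constants but by itself does not yield non-collapsing without the smallness of the local $L^{n/2}$-norm of $R$; this is precisely why the paper's distance-distortion Lemma \ref{distance2} is stated under $(a')$ rather than (a). You correctly flag this as ``the main obstacle,'' but you do not resolve it, and under (a),(b) only it cannot be resolved by the method you sketch. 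A second, smaller issue: even where a distortion estimate \emph{is} available (Lemma \ref{distance2} and its corollary) it is local in time, with constraints like $|t-t_0|\le\alpha r_0^2$ and $r_0\le\sqrt{t_0}$; getting the clean two-point estimate $d_t\le C_* d_0 + C_*\sqrt{t}$ you write down would require an additional chaining argument that is not justified. The paper sidesteps all of this by invoking Topping's diameter bound ({\cite[Theorem~2.4]{topping2005diameter}}): one needs only $\int_M R^{(n-1)/2}\,dvol_{g_t}$ uniformly bounded, which follows directly from (a), H\"older's inequality, and the volume upper bound of Lemma \ref{volume}, with no heat kernel input at all. That is the argument you should use here.
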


\begin{proof}
The uniform lower bound follows in the same way as in the proof of {\cite[Lemma~3.5]{simon2015extending}}
because we have volume lower bound and the non-inflating estimate
(Note that these follows from the assumption (b) as above).
Here, we do not have the non-collapsing estimate,
but we can get the upper diameter bound from {\cite[Theorem 2.4]{topping2005diameter}}.
In fact,
from H{\"o}lder's inequality, the volume bound from above and the assumtion (a), we obtain
\[
\int_{M} R^{\frac{n-1}{2}} dvol_{g_{t}} \le || R ||^{\frac{n-1}{2}}_{L^{n/2}} \cdot \bigl( \mathrm{Vol} (M, g(t)) \bigr)^{1/n} \le 
C_{1}^{\frac{n-1}{n}} \cdot V_{2}^{1/n} < +\infty.
\]
Hence, using Theorem 2.4 in \cite{topping2005diameter}, we obtain a diameter bound from above. 
\end{proof}
Next, we will show that the non-collapsing estimate.
To do this, we firstly prepare the following Sobolev inequality under the closed Ricci flow :
\begin{lemm}[The Sobolev inequaltiy under the Ricci flow({\cite[Theorem~$\textrm{D}^{*}$]{ye2007logarithmic}})]
\label{sobolev}
Let $(M^{n}, g(t))_{t \in [0,T}~(T < +\infty,~n \ge 3)$ be a closed $n$-dimensional Ricci flow.
Then there are some positive constants $A$ and $B$ depending only on the dimension $n$, $\min \{ \inf R_{g(0)},~0 \},$
a positive lower bound for $\mathrm{Vol} (M, g_{0}),$ an upper bound for the Sobolev constant of $(M, g_{0})$
, and $T,$
such that for each $t \in [0, T)$
and all $u \in W^{1,2}(M)$
there holds
\[
\left( \int_{M} |u|^{\frac{2n}{n-2}} dvol \right)^{\frac{n-2}{n}}
\le A \int_{M} \left( |\nabla u|^{2} + \frac{R}{4} u^{2} \right) dvol
+ B \int_{M} u^{2} dvol,
\]
where all geometric quantities, except $A$ and $B,$ are associated with $g(t).$
\end{lemm}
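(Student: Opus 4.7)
The plan is to follow Ye's approach via Perelman's $\mathcal{W}$-entropy. First, I would recall Perelman's $\mathcal{W}$-functional, defined for smooth $f$ and $\tau>0$ by
\[
\mathcal{W}(g,f,\tau)=\int_M \bigl[\tau(|\nabla f|^2+R)+f-n\bigr](4\pi\tau)^{-n/2}e^{-f}\,dvol_g,
\]
constrained by $\int_M (4\pi\tau)^{-n/2}e^{-f}\,dvol_g=1$, and set $\mu(g,\tau)=\inf_f \mathcal{W}(g,f,\tau)$ and $\nu(g,\tau)=\inf_{0<s\le \tau}\mu(g,s)$.

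The next step uses Perelman's monotonicity formula: along the Ricci flow $(M,g(t))$, with $\tau(t)=\tau_0-t$, the quantity $t\mapsto \mu(g(t),\tau(t))$ is nondecreasing. Thus for every $t\in[0,T)$ and every $\tau>0$,
\[
\mu(g(t),\tau)\ge \mu(g(0),\tau+t)\ge \nu(g(0),\tau+T),
\]
so the problem reduces to a uniform lower bound on $\nu(g(0),\tau)$ for $\tau\in(0,T+1]$. To obtain this initial bound, I would convert the classical Sobolev inequality on $(M,g(0))$ (with fixed Sobolev constant) into a logarithmic Sobolev inequality via Gross's theorem, combine it with the lower bound $R_{g(0)}\ge \min\{\inf R_{g(0)},0\}$ to absorb the scalar curvature term in $\mathcal{W}$, and use the volume lower bound to control normalization constants. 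This yields $\nu(g(0),\tau)\ge -K_0$ on $(0,T+1]$ with $K_0$ depending only on $n$, the initial Sobolev constant, the volume lower bound, $\min\{\inf R_{g(0)},0\}$, and $T$.

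Finally, from the uniform bound $\mu(g(t),\tau)\ge -K$ along the flow I would deduce the claimed Sobolev inequality. Writing a test function $u\in W^{1,2}(M)$ as $e^{-f/2}=(4\pi\tau)^{n/4}u/\|u\|_{L^2}$ and inserting into $\mathcal{W}\ge -K$ gives, after a standard rescaling, a $\tau$-parameter family of inequalities of the form
\[
\int_M \Bigl(|\nabla u|^2+\tfrac{R}{4}u^2\Bigr)dvol\ge \tfrac{1}{\tau}\left[\int_M u^2\log u^2\,dvol -\bigl(\tfrac{n}{2}\log(4\pi\tau)+n+K\bigr)\int_M u^2\,dvol\right].
\]
Optimizing $\tau$ (equivalently applying the tight-log-Sobolev-implies-Sobolev machinery, or the Davies argument via semigroup ultracontractivity on the Schrödinger operator $-\Delta+R/4$), one recovers
\[
\left(\int_M |u|^{2n/(n-2)}\,dvol\right)^{(n-2)/n}\le A\int_M\Bigl(|\nabla u|^2+\tfrac{R}{4}u^2\Bigr)dvol+B\int_M u^2\,dvol
\]
with $A,B$ depending only on the permitted data.

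The main technical obstacle is this final conversion: the monotonicity of $\mu$ delivers a uniform logarithmic Sobolev inequality essentially for free, but extracting the clean $L^{2n/(n-2)}$ form above, with $A,B$ depending \emph{only} on the listed data and not on $t$ or $u$, requires careful tracking of constants through the optimization in $\tau$ and a scaling argument to cover the regime $\tau\to 0$. This is precisely the content of Ye's Theorem $\mathrm{D}^{*}$, and I would cite it at this stage rather than redo the optimization in detail.
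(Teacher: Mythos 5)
The paper does not actually prove this lemma; it is stated with a direct citation to Ye's Theorem $\textrm{D}^{*}$ in \cite{ye2007logarithmic}, which is precisely where you land as well. Your sketch is a faithful outline of Ye's argument — propagate a uniform lower bound on Perelman's $\mu$-functional from $t=0$ along the flow via the monotonicity formula, noting $\mu(g(t),\tau)\ge\mu(g(0),\tau+t)\ge\nu(g(0),\tau+T)$, obtain the initial bound on $\nu(g(0),\cdot)$ from the Sobolev constant, volume lower bound, and $\min\{\inf R_{g(0)},0\}$, and then convert the resulting one-parameter family of log-Sobolev inequalities into the $L^{2n/(n-2)}$ Sobolev inequality via the Davies/ultracontractivity machinery — and you correctly identify the $\tau$-optimization/scaling step as the genuine technical content, deferring to Ye for it just as the paper does. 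One small terminological note: the passage from a Sobolev inequality to a log-Sobolev inequality is the elementary direction (Jensen plus scaling), not usually attributed to Gross, whose theorem concerns the reverse implication to hypercontractivity; this does not affect the correctness of your outline.
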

Thus, we can show that closed Ricci flow with small $L^{n/2}$-norm of the scalar curvature 
is local non-collapsed.
\begin{lemm}
\label{non-collapsing}
Let $(M^{n}, g)$ be a Riemannian $n$-manifold $(n \ge 3)$ which satisfies the Sobolev inequality as in the previous lemma.
Then there is a positive constant $\varepsilon = \varepsilon (A) > 0$ such that the following holds:
for $x \in M$ and $r > 0,$ if $|| R_{g} ||^{n/2}_{L^{n/2}( B(x, r) )} \le \varepsilon,$
then
\[
\mathrm{Vol}_{g} (B(x, l)) \ge \delta l^{n},~~\forall l \in (0, \min \{ 1, r \}]
\]
for some $\delta = \delta(A, B) > 0.$
Here $A, B$ denote the corresponding constants in Lemma \ref{sobolev}.
\end{lemm}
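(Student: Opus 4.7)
The plan is to apply the Sobolev inequality of Lemma \ref{sobolev} to a standard distance cutoff supported in $B(x,l)$ and use the smallness of $\|R\|_{L^{n/2}(B(x,r))}$ to absorb the scalar-curvature term into the left-hand side. Writing $V(s) := \mathrm{Vol}_{g}(B(x,s))$, this will produce a recursion of the form $V(l/2)^{(n-2)/n} \le K\,l^{-2}\,V(l)$, which iterated against the Euclidean small-scale limit $V(\rho)/\rho^{n} \to \omega_{n}$ as $\rho \to 0$ yields the desired non-collapsing.

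Concretely, I would take the Lipschitz cutoff $\phi(y) = \min\bigl(1, \max(0, 2 - 2\,d_{g}(x,y)/l)\bigr)$, so that $\phi \equiv 1$ on $B(x,l/2)$, $\mathrm{supp}\,\phi \subset B(x,l)$, and $|\nabla \phi| \le 2/l$ almost everywhere. Plugging $u = \phi$ into the Sobolev inequality, the scalar-curvature term is controlled from above by H\"older coupled to the Sobolev exponent:
\[
\int_{M} R\,\phi^{2} \le \int_{M} |R|\,\phi^{2} \le \| R \|_{L^{n/2}(B(x,l))} \left( \int_{M} \phi^{2n/(n-2)} \right)^{(n-2)/n} \le \varepsilon^{2/n} \left( \int_{M} \phi^{2n/(n-2)} \right)^{(n-2)/n}.
\]
Choosing $\varepsilon = \varepsilon(A)$ so that $(A/4)\,\varepsilon^{2/n} \le 1/2$ lets me absorb this into the left-hand side of Sobolev. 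Combined with the routine bounds $\int \phi^{2n/(n-2)} \ge V(l/2)$, $\int |\nabla \phi|^{2} \le 4\,l^{-2}\,V(l)$, and $\int \phi^{2} \le V(l)$, and using $l \le 1$, I obtain the advertised recursion with $K = K(A,B)$.

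Rewriting in terms of $h(l) := V(l)/l^{n}$, the recursion reads $h(l/2) \le A_{0}\,h(l)^{n/(n-2)}$ for some $A_{0} = A_{0}(A,B,n)$. Setting $\theta_{k} := A_{0}^{(n-2)/2}\,h(l/2^{k})$ gives $\theta_{k+1} \le \theta_{k}^{n/(n-2)}$, and since the exponent exceeds $1$, one has $\theta_{k} \to 0$ whenever $\theta_{0} < 1$. On a smooth Riemannian manifold, however, $h(l/2^{k}) \to \omega_{n} > 0$ as $k \to \infty$, which forces $\theta_{0} \ge 1$, i.e., $h(l) \ge A_{0}^{-(n-2)/2} =: \delta(A,B)$. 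The main obstacle is the careful bookkeeping in the absorption step, to ensure $\varepsilon$ depends only on $A$ and $\delta$ only on $A,B$; a minor subtlety is that $\phi$ is only Lipschitz, which is harmless since Lipschitz functions belong to $W^{1,2}(M)$, so Lemma \ref{sobolev} applies to $\phi$ directly.
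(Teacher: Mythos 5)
Your proof is correct, but it packages the argument differently from the paper. The paper argues by contradiction: it assumes $\mathrm{Vol}_g(B(x,r)) < \delta r^n$, rescales to $\bar g = r^{-2}g$, and uses H\"older together with both the small scalar-curvature assumption \emph{and} the small-volume hypothesis to absorb the $\frac{R}{4}u^{2}$ term and the $Bu^{2}$ term simultaneously, arriving at a pure Sobolev inequality $\bigl(\int |u|^{2n/(n-2)}\bigr)^{(n-2)/n} \le \frac{A}{1-a}\int|\nabla u|^{2}$ on $B(x,1)$; it then invokes a known implication (Akutagawa, Carron, cf.\ Ye's Theorem $\mathrm{E}^{*}$) that such an inequality forces a volume lower bound, contradicting the assumed smallness. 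You instead keep the $B$ term by paying a harmless $l^{-2}$ factor for $l\le 1$, and you reprove the Sobolev-implies-noncollapsing step by hand via the recursion $h(l/2)\le A_{0}h(l)^{n/(n-2)}$ for $h(l)=V(l)/l^{n}$, iterated against the smooth small-scale limit $h(\rho)\to\omega_{n}$. The two proofs rest on the same mechanism (cutoff plus H\"older plus absorption), but yours is self-contained and avoids the rescaling/contradiction bookkeeping, at the cost of proving inline a step the paper outsources; the paper's route keeps the exposition shorter and the dependence of $\delta$ on $A,B$ cleaner. One small point worth making explicit in your write-up: the recursion $V(l/2)^{(n-2)/n}\le K l^{-2}V(l)$ needs both $l\le r$ (for the H\"older estimate on $R$ over $B(x,l)\subset B(x,r)$) and $l\le 1$ (to combine $4Al^{-2}+B\le (4A+B)l^{-2}$), and both persist under halving, so the iteration downward is legitimate for all dyadic subscales of $\min\{1,r\}$.
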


\begin{proof}
Since $|| R_{g} ||^{n/2}_{L^{n/2}( B(x, l) )} \le \varepsilon$ holds for all $0< l \le 1$ if $r > 1,$
it is sufficient to show the assertion in the case that $r \le 1.$

Fix a constant $a \in (0, 1).$
Take and fix $0 < \varepsilon < ( \frac{4a}{A} )^{n/2}.$
We suppose that the following holds
\[
\mathrm{Vol}_{g} (B(x, r)) < \delta r^{n},
\]
where 
\[
\delta := \min \left\{ \left( \frac{a - \left( \frac{A}{4}  \right) \varepsilon^{2/n}}{B} \right)^{n/2},~
\left( \frac{1 - a}{2^{n + 2} A} \right)^{n/2} \right\}.
\]
We will lead to a contradiction.
Set $\bar{g} := \frac{1}{r^{2}} g.$
Then we have for $\bar{g},$
\[
(*)~~~\mathrm{Vol}_{\bar{g}} (B(x, 1)) < \delta
\]
and
\[
|| R_{\bar{g}} ||^{n/2}_{L^{n/2}( B(x, 1) )} \le \varepsilon.
\]
In the rest of the proof, we will denote all geometric quantities(the geodesic balls, the norms, $dvol,$ and $R$) with respect to $\bar{g}.$
By the Sobolev inequality for $g$ and the assumption $r \le 1,$
for $u \in C^{\infty}(M)$ with support contained in $B(x, 1),$ 
we then have
\[
\begin{split}
\left( \int_{B(x, 1)} |u|^{\frac{2n}{n-2}} dvol \right)^{\frac{n-2}{n}}
&\le A \int_{B(x, 1)} \left(|\nabla u|^{2} + \frac{R}{4} u^{2} \right) dvol
+ B r^{2} \int_{B(x, 1)} u^{2} dvol \\
&\le A \int_{B(x, 1)} \left(|\nabla u|^{2} + \frac{R}{4} u^{2} \right) dvol
+ B \int_{B(x, 1)} u^{2} dvol.
\end{split}
\]
From H{\"o}lder's inequality and upper volume estimate, we have
\[
\int_{B(x, 1)} u^{2} dvol \le \delta^{2/n} \left( \int_{B(x, 1)} |u|^{\frac{2n}{n-2}} dvol \right)^{\frac{n-2}{n}}.
\]
And, from H{\"o}lder's inequality and the assumption (Note that the $L^{n/2}$-assumption is scale invariant), 
we have
\[
\int_{B(x, 1)} \frac{R}{4} u^{2} dvol \le \frac{\varepsilon^{2/n}}{4} \left( \int_{B(x, 1)} |u|^{\frac{2n}{n-2}} dvol \right)^{\frac{n-2}{n}}.
\]
Hence we deduce
\[
\begin{split}
\left( \int_{B(x, 1)} |u|^{\frac{2n}{n-2}} dvol \right)^{\frac{n-2}{n}}
&\le A \int_{B(x, 1)} |\nabla u|^{2} dvol \\
&~~~~~~~~~~+ \left( \frac{A \varepsilon^{2/n}}{4} + B \delta^{2/n} \right) \left( \int_{B(x, 1)} |u|^{\frac{2n}{n-2}} dvol \right)^{\frac{n-2}{n}} \\
&\le A \int_{B(x, 1)} |\nabla u|^{2} dvol + a \left( \int_{B(x, 1)} |u|^{\frac{2n}{n-2}} dvol \right)^{\frac{n-2}{n}}.
\end{split}
\]
It follows that
\[
\left( \int_{B(x, 1)} |u|^{\frac{2n}{n-2}} dvol \right)^{\frac{n-2}{n}}
\le \frac{A}{1 - a} \int_{B(x, 1)} |\nabla u|^{2} dvol.
\]
Then, we have the following volume bound as in {\cite[Proof~of~Proposition~2.1]{akutagawa1994yamabe}}
or \cite{carron1994sobolev}
(cf. {\cite[Theorem~$\textrm{E}^{*}$]{ye2007logarithmic}}).
\[
\mathrm{Vol} (B(x, \rho)) \ge \left( \frac{1 - a}{2^{n + 2} A} \right)^{n/2} \rho^{n}
\]
for all $B(x, \rho) \subset B(x, 1).$
Consequently we have 
\[
\mathrm{Vol} (B(x, 1)) \ge \left( \frac{1 - a}{2^{n + 2} A} \right)^{n/2}.
\]
This contradict the above upper volume bound $(*)$.
\end{proof}

\noindent
Thus, we will consider the following assumption $(a')$ which is stronger than (a):
\[
(a')~~~~~\mathrm{For~some}~r > 0,~\sup_{t \in [0,T)}~|| R_{g} ||^{n/2}_{L^{n/2}( B(x, r, t) )} \le \varepsilon,~~\forall x \in M,
\]
where $\epsilon$ denotes the positive constant as in the previous lemma.

Note that $(a')$, (b) implies (a) from the above non-collapsing estimate and volume upper bound.
Indeed, fix any $t \in [0,T)$ and let $N \in \mathbb{N}$ be the maximal number of collection of balls so that
$\{ B(x_{i}, r/2, t) \}_{1 \le i \le N}$ are disjoint each other and $\{ B(x_{i}, r, t) \}_{1 \le i \le N}$ are a covering of $M.$
Then, from the non-collapsing estimate and volume upper bound,
$N \le \frac{V_{2}}{\delta \cdot (r/2)^{n}},$
where $V_{2}, \delta,~r$ denote respectively the upper volume bound in Lemma \ref{volume}, the non-collapsing constant in Lemma \ref{non-collapsing}
and the radius in the definition of $(a').$
Moreover, by the definition of $N,$
\[
|| R_{g(t)} ||^{2}_{L^{2}(M)} \le \sum^{N}_{i = 1}~|| R_{g(t)} ||^{2}_{L^{2}(B(x, r, t))} \le \varepsilon N \le \varepsilon \cdot \frac{V_{2}}{\delta \cdot (r/2)^{n}}.
\]
\begin{rema}
By {\cite[Theorem~8.1]{bamler2020entropy}}, the pointed Nash entropy at $(x,t)$ is bounded by the volume growth of geodesic ball centered at $(x,t)$ from below. 
Hence, for closed Ricci flow $(M^{n}, g(t))_{t \in [0,T)}$ satisfying $(a'),$
we can adapt the theory in \cite{bamler2020structure}.
That is, take a sequence of points $(x_{i}, t_{i}) \in M \times [0,T)$ with $t_{i} \rightarrow T,$
then after application of a time-shift by $-t_{i},$
the flows $(M, g_{i}(s) := g(s + t_{i}))_{s \in [-t_{i}, 0]}$ satisfies the non-collapsedness assumption $(1.3)$ in \cite{bamler2020structure} as mentioned above (see {\cite[Part~1]{bamler2020structure}}).
\end{rema}

Next, we will show a distance distortion estimate.
\begin{lemm}[cf. {\cite[Theorem~1.1]{bamler2017heat}}]
\label{distance2}
Suppose that $(M^{n}, g(t))_{t \in [0,T)}$ satisfies $(a')$ and $(b).$
Then there exists a constant $\alpha = \alpha(g_{0}, n, T, C_{2})$
such that the following holds:
Suppose that $t_{0} \in [0, T ),~0 \le r_{0} \le \sqrt{t_{0}}.$
Let $x_{0}, y_{0} \in M$ with $d_{t_{0}} (x_{0}, y_{0}) \ge r_{0}$
and let $t \in [ t_{0} - \alpha r_{0}^{2}, \min \{ t_{0} + \alpha r_{0}^{2}, T \} ).$
Then
\[
\alpha d_{t_{0}} (x_{0}, y_{0}) < d_{t} (x_{0}, y_{0}) < \alpha^{-1} d_{t_{0}} (x_{0}, y_{0}).
\]
Here, $C_{2}$ denotes the upper bound in $(b)$ stated in section 1.
\end{lemm}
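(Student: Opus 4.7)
The strategy is to adapt Bamler and Zhang's heat kernel approach from \cite{bamler2017heat} to the present weaker hypothesis, where the scalar curvature is controlled only through the integral bound $(b)$ rather than a pointwise $L^\infty$ bound. Direct geodesic estimates on $d_t$ would require a Ricci bound that is not available, so instead I would work with the heat kernel $K(x,t;y,s)$ coupled to the Ricci flow (and its conjugate $G(y,s;x,t)=K(x,t;y,s)$) and exploit two-sided Gaussian bounds on it.

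First I would establish a Gaussian upper bound of the form $K(x,t;y,s) \le C(t-s)^{-n/2}\exp(-d_t^2(x,y)/C(t-s))$ with $C=C(g(0),n,T,C_2)$. The Sobolev inequality of Lemma \ref{sobolev}, together with the $L^{n/2}$-smallness of $R$ coming from $(a')$ via Lemma \ref{non-collapsing}, gives by Moser iteration the on-diagonal estimate $K(x,t;y,s)\le C(t-s)^{-n/2}$, and the off-diagonal Gaussian improvement then follows from a Davies-type exponential shift argument. I would pair this with a matching Gaussian lower bound on $G$, obtained by the same Harnack-chain construction that already appears in the proof of Lemma \ref{non-inflating}, where the key replacement $\int_s^t \sup_M|R|\,d\tau \le C_2$ has already been carried out successfully.

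With the two-sided bounds in hand, the reduced potential $\Phi_{x_0,t_0}(y,s) := -\log K(x_0,t_0;y,s) - \frac{n}{2}\log(4\pi(t_0-s))$ is comparable, up to constants depending only on $g(0),n,T,C_2$, to $d_{t_0}(x_0,y)^2/(4(t_0-s))$ on the parabolic neighborhoods in play. I would then compare the kernels based at $(x_0,t_0)$ and at $(x_0,t)$ for $|t-t_0|\le \alpha r_0^2$: their concentration regions at the intermediate time $s=\max\{0,t_0-r_0^2\}$ must overlap on a set whose $d_{t_0}$-diameter is at most a universal multiple of $r_0$. Since $d_{t_0}(x_0,y_0)\ge r_0$, this pinches $d_t(x_0,y_0)$ between $\alpha d_{t_0}(x_0,y_0)$ and $\alpha^{-1}d_{t_0}(x_0,y_0)$ for a small $\alpha=\alpha(g(0),n,T,C_2)$, exactly as in Bamler and Zhang. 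Both the upper and lower bounds on $d_t(x_0,y_0)$ emerge from the same comparison, after exchanging the roles of $t$ and $t_0$ within the time window.

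The main obstacle is verifying that every appeal in \cite{bamler2017heat} to a pointwise bound $|R|\le R_0$ can be replaced by the integral bound $(b)$. This works because each such appeal falls into one of two categories: either $R$ appears in a Sobolev--Moser computation, where $L^{n/2}$-smallness from $(a')$ is what is really being used, or $R$ enters inside a time-integrated exponential weight, where $(b)$ bounds the integral uniformly by $C_2$ independently of which subinterval of $[0,T)$ is considered. As a consequence, all constants in the Bamler--Zhang argument can be taken to depend only on $g(0),n,T,C_2$, producing the stated distortion estimate with $\alpha=\alpha(g(0),n,T,C_2)$.
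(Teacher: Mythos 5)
Your overall intuition about \emph{where} to substitute the integral bound $(b)$ for a pointwise bound is sound, and the paper's actual proof indeed does exactly what you describe in your last paragraph: in every time-integrated exponential weight $\int_{t_0}^{t} R\,ds$ appearing in the Bamler--Zhang argument, the pointwise $|R|\le R_0$ is replaced by $(b)$, together with a uniform-continuity argument for $(t,p)\mapsto\int_t^T R(p,s)\,ds$ to make the integral small on short windows. However, the \emph{route} you take to deploy that principle---first proving two-sided Gaussian bounds for the heat kernel, then comparing a ``reduced potential'' at two base times---is not what the paper (or Bamler--Zhang's Theorem~1.1) does, and as written it has a genuine circularity.

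The problem lies in the off-diagonal content of your proposed bounds. In a Ricci flow, the Davies exponential-shift argument for the Gaussian upper bound requires controlling the time evolution of Lipschitz weights $e^{\pm\psi}$ under $\partial_t-\Delta$; this brings in pointwise Ricci bounds (or at least a Ricci lower bound), which are unavailable under $(a')$ and $(b)$. The matching Gaussian lower bound via Harnack chaining has the same issue: moving from the near-diagonal lower bound used in Lemma~\ref{non-inflating} to a full bound $G\gtrsim (t-s)^{-n/2}e^{-d^2/c(t-s)}$ requires that distances along the chain at time $s$ and at time $t_0$ be comparable---but that is precisely the distance-distortion estimate you are trying to prove. (In Bamler--Zhang, the two-sided Gaussian bound is a \emph{consequence} of Theorem~1.1, not an input to it.) Similarly, the claim that the concentration regions of $K(x_0,t_0;\cdot,s)$ and $K(x_0,t;\cdot,s)$ ``must overlap'' on a set of controlled $d_{t_0}$-diameter is exactly the statement that the metric does not distort too much between $s$, $t$, $t_0$, which again begs the question.

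The paper instead follows Bamler--Zhang's Theorem~1.1 verbatim in structure: use Perelman's reduced distance to find a point $z$ with $l_{(x_0,t_0)}(z,t_0-r_0^2/2)\le n/2$; consider the heat kernel $K(\cdot,\cdot;z,t_0-r_0^2/2)$; apply the on-diagonal upper bound, the lower bound at $(x_0,t_0)$ from the reduced distance, and the gradient estimate for $\log K$ to get a uniform lower bound for $K(\cdot,t_0)$ along a $g(t_0)$-minimizing geodesic $\gamma$; propagate this lower bound to nearby times via the time-derivative estimate for $K$; then combine $\int_M K\,dvol_t\le e^{-\min R(g_0)\cdot(t-s)}$ with the non-collapsing estimate to bound the number of $g(t)$-balls needed to cover $\gamma$, hence $d_t(x_0,y_0)$. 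None of these steps require off-diagonal Gaussian bounds. The only place $(b)$ enters is in the time-derivative step, where $\int_{t_0}^{t} R(z,s)\,ds$ is made small by the uniform-continuity argument. Your proposal should be reorganised to follow this line rather than to construct Gaussian bounds first.
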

\begin{proof}
The proof of {\cite[Theorem~1.1]{bamler2017heat}} can be adapted to this case with some minor modifications.

\underline{Step~1}
:Assume that $r_{0} \le d_{t_{0}} (x_{0}, y_{0}) \le 2 r_{0}.$
We shall find the upper bound on $d_{t} (x_{0}, y_{0}).$
From {\cite[End~of~7.1]{perelman2002entropy}}
or {\cite[Chap~7]{morgan2007ricci}}
(see {\cite[Proof~of~Theorem~1.1]{bamler2017heat}}),
there exists a point $z \in M$ such that
\[
l_{(x_{0}, t_{0})} (z, t_{0} - \frac{1}{2} r_{0}^{2}) \le \frac{n}{2},
\]
where $l_{(x_{0}, t_{0})} (z, t_{0} - \frac{1}{2} r_{0}^{2})$
denotes the reduced distance between $(x_{0}, t_{0})$ and $(z, t_{0} - \frac{1}{2} r_{0}^{2}).$
Consider the heat kernel $K(x, t) = K(x, t; z, t_{0} - \frac{r_{0}^{2}}{2})$
centered at $(z, t_{0} - \frac{r_{0}^{2}}{2})$
(i.e., $\partial_{t} K = \Delta K$).
Then, from the scalar maximum principle, we obtain
\[
\begin{split}
\frac{d}{dt} \int_{M} K(\bullet, t) dvol_{t} 
&= \int_{M} \Delta K(\bullet, t) - R(\bullet, t) K(\bullet, t) dvol_{t} \\
&=\int_{M} - R(\bullet, t) K(\bullet, t) dvol_{t} \\
&\le \int_{M} - \min R(\bullet, t) K(\bullet, t) dvol_{t} \\
&\le \int_{M} - \min R(\bullet, 0) K(\bullet, t) dvol_{t}.
\end{split}
\]
Hence 
\[
\int_{M} K(\bullet, t) dvol_{t} \le e^{- \min R(g_{0}) (t - (t_{0} - \frac{r_{0}^{2}}{2}))}
\]
for all $t \in [t_{0} - \frac{r_{0}^{2}}{2}, t_{0} + \frac{r_{0}^{2}}{2}].$
From the heat kernel estimate (2.9), (2.10) in \cite{bamler2017heat},
there is some positive constant $B_{0},$ such that for all $t \in [t_{0} - \frac{r_{0}^{2}}{4}, t_{0} + \frac{r_{0}^{2}}{4}],$
\[
K(\bullet, t) < \frac{B_{0}}{\bigl( t - (t_{0} - r_{0}^{2} / 2) \bigr)^{n/2}}~~~~\mathrm{on}~M
\]
and
\[
K(x_{0}, t_{0}) \ge \frac{e^{- l_{(x_{0}, t_{0})} (z, t_{0} - r^{2}_{0} / 2)}}{\bigl( 4 \pi (t_{0} - (t_{0} - r^{2}_{0} / 2)) \bigr)^{n/2}}
\ge
\frac{e^{-n/2}}{(2 \pi r_{0}^{2})^{n/2}}.
\]
Hence we obtain
\[
K < B \cdot r_{0}^{- n}~~~~\mathrm{on}~M \times \left[t_{0} - \frac{r^{2}_{0}}{4}, t_{0} + \frac{r^{2}_{0}}{4} \right]
\]
for some positive constant $B.$
And, from the derivative estimate (2.8) in \cite{bamler2017heat},
\[
\frac{|\nabla K(\bullet, t')|}{K(\bullet, t')}
<
\sqrt{\frac{1}{t' - (t_{0} - r^{2}_{0} / 8)}} \sqrt{\log \frac{B \cdot r^{-n}_{0}}{K(\bullet, t')}}
\]
on $M$ for $t' \in [t_{0} - \frac{r^{2}_{0}}{8}, t_{0} + \frac{r^{2}_{0}}{8}].$
We can rewrite this as
\[
\left| \nabla \sqrt{\log \frac{B \cdot r^{-n}_{0}}{K(\bullet, t')}} \right| < \frac{1}{2 \sqrt{t' - (t_{0} - r^{2}_{0} / 8)}}.
\]
Let $\gamma : [0,1] \rightarrow M$ be a minimizing geodesic between $x_{0}$ and $y_{0}$ with respect to $g(t_{0}).$
Integrating the above bound at time $t_{0}$ along $\gamma$ and using the lower bound in the above yields that
\[
K(\bullet, t_{0}) > c r_{0}^{- n}~~~~\mathrm{on}~\gamma \left( [0,1] \right)
\]
for some constant $c > 0.$
Assume that $\alpha \in \min \left\{ 1/8, \alpha (B,C_{2},a) \right\}$
and fix some $t \in [t_{0} - \alpha r^{2}_{0}, t_{0} + \alpha r^{2}_{0}]$ 
then, by {\cite[Lemma~3.1~(a)]{bamler2017heat}} and the assumption (b),
for all $z \in \gamma ([0,1]),$
\[
\mathrm{sgn}(t_{0} - t) \left( |K(z, t_{0})| - |K(z, t)| \right) < \int^{t}_{t_{0}} \frac{B r^{-n}_{0}}{s} ds + (B r^{-n}_{0}) \int^{t}_{t_{0}} R (z, s)~ds
\]
and
\[
K(\bullet, t) > \frac{c r_{0}^{- n}}{2}~~~~\mathrm{on}~\gamma \left( [0,1] \right).
\]
We expalin this in more detail here.
From the assumption $(b)$ and the dominated convergence theorem, for any $p \in M,$
\[
[0, T] \ni t \mapsto \int^{T}_{t} R(p, s) ds
\] 
is continuous.
And, since $(g_{t})_{t \in [0,T)}$ is a smooth Ricci flow, 
\[
F : [0,T] \times M \rightarrow \mathbb{R}~;~(t, p) \mapsto \int^{T}_{t} R(p, s) ds
\]
is continuous.
Since $M$ is compact, $F$ is also uniformly continuous.
Hence there is a constant $\delta = \delta \left( \frac{c B^{-1}}{4} \right)$ such that
\[
\int^{t}_{t_{0}} R(z, s)~ds \le \frac{c B^{-1}}{4}~~~~\mathrm{if}~|t - t_{0}| < \delta.
\]

We go back to the proof.
From the above derivative estimate, for all $s \in [0,1],$
\[
K(\bullet, t) > \frac{c r_{0}^{- n}}{4}~~~~\mathrm{on}~B\left( \gamma(s), t, \beta r_{0} \right)
\]
for some uniform $\beta > 0.$
Then, there is a positive integer $N,$ $d_{t} (x_{0}, y_{0}) < N \cdot 4 \beta r_{0}$
and 
\[
e^{\max \{ 0, - \min R_{g_{0}} \} T}
\ge \int_{M} K(\bullet, t) dvol_{t}
> N \delta r_{0}^{n} \beta^{n} (c/4) r_{0}^{- n}.
\]
Here, $\delta$ denotes the positive constant as in the previous lemma.
Hnece 
\[
N < \frac{e^{\max \{ 0, - \min R_{g_{0}} \} T}}{\delta \beta^{n} (c/4)} =: N_{0}
\]
Then, from the asumption $r_{0} \le d_{t_{0}} (x_{0}, y_{0}) \le 2 r_{0},$
\[
d_{t} (x_{0}, y_{0}) < 4 N_{0} \beta r_{0}
\le 4 N_{0} \beta d_{t_{0}} (x_{0}, y_{0}).
\]
Therefore we have proven the upper bound for
\[
\alpha \le \min \left\{ \frac{1}{8}, \alpha(B, C_{2}, a), (4 N_{0} \beta)^{-1} \right\}
\]
in the case in which $r_{0} \le d_{t_{0}} (x_{0}, y_{0}) \le 2 r_{0}.$

\noindent
\underline{Step~2}
:The upper bound in the case in which $r_{0} \le d_{t_{0}} (x_{0}, y_{0}).$

This is proven in the same way as in Step 2 in the proof of {\cite[Theorem~1.1]{bamler2017heat}}.

\noindent
\underline{Step~3}
:The lower bound.

This is proven in the same way as in Step 3 in the proof of {\cite[Theorem~1.1]{bamler2017heat}}.
\end{proof}

Then, we have a corollary in the following:

\begin{coro}[cf. {\cite[Corollary~1.2]{bamler2017heat}}]
Assume a closed Ricci flow $(M^{n}, g(t))$ satisfies the same assumption in the previous lemma.
Then, there exists a constant $A = A (n, g(0), T, C_{2})$
the following holds:
Suppose that $t_{0} \in [0, T),~x_{0}, y_{0} \in M$ and $r_{0} := d_{t_{0}}(x_{0}, y_{0}).$
For any $t \in [r_{0}^{2}, T)$ with $A^{2}(r_{0}^{2} + |t - t_{0}|) \le t_{0},$
we have 
\[
d_{t} (x_{0}, y_{0}) < A \sqrt{r_{0}^{2} + |t - t_{0}|}.
\]
Here, $C_{2}$ denotes the upper bound in $(b)$ stated in section 1.
\end{coro}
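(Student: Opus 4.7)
The plan is to argue by contradiction, applying Lemma \ref{distance2} at the base time $t$ (rather than at $t_0$) with a scale chosen according to the size of $d_t(x_0, y_0)$. Suppose for contradiction that $d_t(x_0, y_0) \geq A \sqrt{r_0^2 + |t - t_0|}$ for the given $t$, where I take $A := 2/\alpha$ with $\alpha$ the constant from Lemma \ref{distance2}. The case $r_0 = 0$ is trivial, so I may assume $r_0 > 0$. The key point is that in both sub-cases described below, Lemma \ref{distance2} will yield $r_0 = d_{t_0}(x_0, y_0) > \alpha \, d_t(x_0, y_0) \geq \alpha A r_0$, which contradicts $\alpha A = 2 > 1$.

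If $d_t(x_0, y_0) \le \sqrt{t}$, I apply Lemma \ref{distance2} at base time $t$ with scale $r := d_t(x_0, y_0)$. The scale condition $r \le \sqrt{t}$ holds by assumption, and the contradiction hypothesis gives $|t - t_0| \leq d_t(x_0, y_0)^2 / A^2 \leq \alpha \, d_t(x_0, y_0)^2 = \alpha r^2$ (using $A \geq 1/\sqrt{\alpha}$), so $t_0$ lies in the allowed time interval. If instead $d_t(x_0, y_0) > \sqrt{t}$, I take scale $r := \sqrt{t}$; here, checking $|t - t_0| \leq \alpha t$ reduces to comparing the hypothesis $A^2(r_0^2 + |t-t_0|) \leq t_0$ with the consequence $t \geq t_0(1 - 1/A^2)$, and this goes through provided $A^2 \geq 1 + 1/\alpha$. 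The choice $A = 2/\alpha$ simultaneously satisfies $A > 1/\alpha$, $A \geq 1/\sqrt{\alpha}$, and $A^2 \geq 1 + 1/\alpha$ for every $\alpha \in (0,1)$, so both cases close, using $d_t(x_0, y_0) \geq A\sqrt{r_0^2 + |t-t_0|} \geq A r_0$ to obtain the final chain of inequalities above.

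The main obstacle, and essentially the only one, is matching the hypothesis $A^2(r_0^2 + |t-t_0|) \leq t_0$ to the allowed-interval condition $|t - t_0| \leq \alpha r^2$ of Lemma \ref{distance2} after shifting the base time from $t_0$ to $t$; the case split above is exactly what makes this translation succeed, and is what determines the precise dependence of $A$ on $\alpha$. The extra standing assumption $t \in [r_0^2, T)$ is not essential to this argument but is used implicitly to rule out degenerate scales, and is in any case automatic once $A$ is chosen as above.
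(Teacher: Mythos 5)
Your proof is correct and follows essentially the same approach as the cited Bamler--Zhang argument (which the paper invokes without reproducing): argue by contradiction, apply Lemma \ref{distance2} with its base time shifted to $t$ and scale $r = \min\{d_t(x_0,y_0), \sqrt{t}\}$, and use the resulting lower bound $\alpha\, d_t(x_0,y_0) < d_{t_0}(x_0,y_0) = r_0$ together with the contradiction hypothesis to force $\alpha A < 1$. The case split on whether $d_t(x_0,y_0)$ exceeds $\sqrt{t}$ is exactly the bookkeeping needed to verify the scale and time-window hypotheses of the lemma, and your choice $A = 2/\alpha$ closes both cases (and, as you note, makes $t \ge r_0^2$ automatic from $A^2 r_0^2 \le t_0 \le t + |t-t_0|$).
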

\begin{proof}
This is proven in the same way as in the proof of Corollary 1.2 in \cite{bamler2017heat}
by using the previous lemma.
\end{proof}

Next, we prepare a key lemma to estimate the Ricci curvature pointwise by the Riemann scale.
To do this, we also need an additional assuumption $(a'')$ which is equivalent to the condition
$(*)_{p_{0}, \varepsilon}$ in our Main Theorems: Fix $p_{0} > n/2$ for dimension $n \ge 4.$
\[
(a'')~~~~~\mathrm{For~some}~r > 0,~\sup_{t \in [0,T)} || R_{t} ||^{n/2}_{L^{p_{0}}(B(x, t, r))} \le \frac{\tilde{\varepsilon}}{V_{2}^{1 - \frac{n}{2p_{0}}}},~\forall x \in M
\]
with $\tilde{\varepsilon} = \min \left\{  \varepsilon,~V_{2}^{1 - \frac{n}{2p_{0}}} \right\},$
where $\varepsilon,~V_{2}$ denote respectively the constant in the condition $(a')$ and 
the upper volume bound in Lemma \ref{volume}.
Note that $(a'')$ implies $(a')$ by H{\"o}lder's inequality since $p_{0} > n/2.$
Then we can obtain the useful estimates which is similar to one by Bamler and Zhang~\cite{bamler2017heat}.
In paticular, this is essential to show that the blow-up limits are Ricci flat (see Lemma \ref{limit} below):
\begin{lemm}[{\cite[Proposition~2.1]{jian2020global}}~(cf. {\cite[Lemma~6.1]{bamler2017heat}})]
\label{bamler6.1}
Assume $(a'')$ and $(b).$
Then there exist
$0 < \alpha < 1,~0< A_{0}, A_{1}, \cdots < + \infty$ which are universal constants depend on 
$n, g_{0}, T, C_{2}, p_{0}$ (where $C_{2}$ denotes the upper bound in $(b)$ stated in section 1)
such that the following holds :
Assume that $x_{0} \in M, t_{0} \in [0, T), 0 < r_{0}^{2} < \min \{ 1, t_{0} \}$
and
assume that $B(x_{0}, t_{0}, r_{0})$ is relatively compact and
$|\mathrm{Rm}| \le r_{0}^{-2}$ on $P(x_{0}, t_{0}, r_{0}, - r_{0}^{2}).$
Then the following holds :
\[
|\mathrm{Ric}|(x_{0}, t_{0}) < A_{0} r_{0}^{-1-\frac{n}{2p_{0}}},
\]
\[
|\partial_{t} \mathrm{Rm}|(x_{0}, t_{0}) < A_{0} r_{0}^{-3 - \frac{n}{2p_{0}}}.
\]
Here, $P(x_{0}, t_{0}, r_{0}, - r_{0}^{2})$ denotes the backward parabolic ball centered at $(x_{0}, t_{0})$
of radius $r_{0}$ defined as 
\[
P(x_{0}, t_{0}, r_{0}, - r_{0}^{2}) := B(x_{0}, t_{0}, r_{0}) \times ([t_{0}  -r_{0}^{2}, t_{0}] \cap [0,T)).
\]
Moreover, $|\nabla^{m} \mathrm{Ric}|(x_{0}, t_{0}) < A_{m} r_{0}^{-1-m -\frac{n}{2p_{0}}}$ for all $m \ge 0.$
\end{lemm}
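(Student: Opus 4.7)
The plan is to adapt the Bamler--Zhang argument, with the scalar-curvature smallness entering through an $L^{p_0}$ bound rather than through a sup bound. First I would parabolically rescale $\tilde g(s) := r_0^{-2} g(t_0 + r_0^{\,2} s)$, so that on the rescaled parabolic ball $\widetilde P := P(x_0, 0, 1, -1)$ the hypothesis becomes $|\widetilde{\mathrm{Rm}}|_{\tilde g} \le 1$. Combined with the non-inflating volume bound (Lemma \ref{non-inflating}) and the requirement $r_0 \le r$, the assumption $(a'')$ translates into
\[
\|\widetilde R\|_{L^{p_0}(\widetilde P)} \le C\,\tilde\varepsilon\, r_0^{\,2 - n/p_0},
\]
and the claimed estimate $|\mathrm{Ric}|(x_0, t_0) \le A_0\, r_0^{-1 - n/(2p_0)}$ reduces, after undoing the scaling, to showing $|\widetilde{\mathrm{Ric}}|(x_0, 0) \le C\, r_0^{\,1 - n/(2p_0)}$.

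The heart of the argument is passing from a small $L^{p_0}$ bound on $\widetilde R$ to a small $L^2$ bound on $\widetilde{\mathrm{Ric}}$. Using the scalar-curvature evolution
\[
\partial_s \widetilde R = \Delta_{\tilde g}\widetilde R + 2|\widetilde{\mathrm{Ric}}|^2,
\]
I would pair with a smooth space-time cutoff $\eta^2$ that is $1$ on $\widetilde P/2$ and integrate by parts twice in space and once in time, carefully accounting for $\partial_s d\widetilde{vol} = -\widetilde R\, d\widetilde{vol}$. Schematically,
\[
2\int_{\widetilde P} |\widetilde{\mathrm{Ric}}|^2\, \eta^2\, d\widetilde{vol}\, ds \le C\int_{\widetilde P} |\widetilde R|\bigl(|\partial_s \eta^2| + |\Delta_{\tilde g}\eta^2| + |\widetilde R|\eta^2\bigr) d\widetilde{vol}\, ds,
\]
and H\"older with exponents $(p_0,\, p_0/(p_0-1))$, together with the trivial pointwise bound $|\widetilde R| \le C$ inherited from $|\widetilde{\mathrm{Rm}}| \le 1$, gives $\int_{\widetilde P/2}|\widetilde{\mathrm{Ric}}|^2 \le C\|\widetilde R\|_{L^{p_0}(\widetilde P)} \le C\,\tilde\varepsilon\, r_0^{\,2 - n/p_0}$.

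Next I would apply parabolic Moser iteration to the Bochner-type inequality
\[
(\partial_s - \Delta_{\tilde g})|\widetilde{\mathrm{Ric}}|^2 \le -2|\nabla\widetilde{\mathrm{Ric}}|^2 + C|\widetilde{\mathrm{Rm}}|\,|\widetilde{\mathrm{Ric}}|^2 \le C|\widetilde{\mathrm{Ric}}|^2,
\]
which, combined with the (uniform) Sobolev inequality of Lemma \ref{sobolev} available on the rescaled flow, yields a parabolic mean-value bound
\[
|\widetilde{\mathrm{Ric}}|(x_0, 0) \le C\Bigl(\int_{\widetilde P/2} |\widetilde{\mathrm{Ric}}|^2\, d\widetilde{vol}\, ds\Bigr)^{1/2} \le C\, r_0^{\,1 - n/(2p_0)}.
\]
Rescaling back gives the claimed bound on $|\mathrm{Ric}|(x_0, t_0)$.

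For the higher-order estimates I would bootstrap: the evolution equations $(\partial_t - \Delta)\nabla^m\mathrm{Ric} = \sum_{j}\nabla^j\mathrm{Rm}\ast\nabla^{m-j}\mathrm{Ric}$, combined with Shi's local derivative estimates $|\nabla^k\mathrm{Rm}| \le C_k r_0^{-2-k}$ (which only use $|\mathrm{Rm}|\le r_0^{-2}$) and the already established bounds on $\mathrm{Ric}$, feed successive Moser iterations to yield $|\nabla^m\mathrm{Ric}|(x_0, t_0) \le A_m r_0^{-1-m-n/(2p_0)}$. The estimate on $|\partial_t\mathrm{Rm}|$ then follows from Hamilton's identity $\partial_t\mathrm{Rm} = \nabla^2\mathrm{Ric} + \mathrm{Rm}\ast\mathrm{Ric}$ by substituting the $m=2$ case together with $|\mathrm{Rm}|\le r_0^{-2}$. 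The main obstacle will be handling the scale-dependence in the integration-by-parts step and in the Moser iteration, in particular ensuring that the Sobolev, volume, and non-collapsing constants produced by Lemmas \ref{volume}--\ref{non-collapsing} behave uniformly under the parabolic rescaling — the universality of the constants $A_m$ in the statement hinges precisely on this uniformity.
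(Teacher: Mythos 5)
Your proposal is correct and follows the same route the paper delegates to \cite{jian2020global} (cf.~\cite{bamler2017heat}): the engine in each case is the scalar-curvature evolution $\partial_t R = \Delta R + 2|\mathrm{Ric}|^2$, used to convert localized $L^{p_0}$ smallness of $R$ into a weighted space-time $L^2$ bound on $|\mathrm{Ric}|^2$, which is then upgraded by parabolic Moser iteration under $|\mathrm{Rm}|\le r_0^{-2}$ and Shi's estimates; the paper's own ``proof'' is essentially a citation plus the remark that the coordinate radius should be taken $\le\min\{1,r\}$, matching your requirement $r_0\le r$. One small technical point to tighten when writing this out: your schematic integration-by-parts inequality drops the boundary term $\int \widetilde R\,\eta^2\,d\widetilde{\mathrm{vol}}\,\big|_{s=0}$, which does not vanish when $\eta\equiv 1$ near $(x_0,0)$, but it is controlled exactly like the other terms by $\|\widetilde R(\cdot,0)\|_{L^{p_0}}\cdot\mathrm{Vol}\bigl(\mathrm{supp}\,\eta\bigr)^{1-1/p_0}\le C\,r_0^{\,2-n/p_0}$ via H\"older and Lemma \ref{non-inflating}, so your conclusion is unaffected.
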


\begin{rema}
Since $|\mathrm{Ric}|(x,t) \le \sqrt{n} (n-1) |\mathrm{Rm}|(x,t),$
we only have $|\mathrm{Ric}|(x_{0},t_{0}) \le \sqrt{n} (n-1) r_{0}^{-\mathbf{2}}.$
And, by Shi's estimate, we only have $|\partial_{t} \mathrm{Rm}|(x_{0}, r_{0}) < C r_{0}^{-\mathbf{4}}.$
\end{rema}

\begin{proof}[Proof of Lemma \ref{bamler6.1}]
The proof of {\cite[Proposition~2.1]{jian2020global}}
can be adapted to this case with minor modification.
We only indicate the necessary change. 
In the proof of {\cite[Proposition~2.1]{jian2020global}},
take $\alpha$ (the radius of the domain of the exponential map) so that $\alpha \le \min \{1, r \}$
(where $r > 0$ is the one in $(a'')$),
then we can argue in the same way in {\cite[Proposition~2.1]{jian2020global}}
and obtain desired assertions.
\end{proof}

\begin{defi}[Curvature radius~({\cite[Definition~1.7]{bamler2017heat}})]
For Riemannian manifold $(M, g)$ and a point $x \in M$ we define
\[
r_{|\mathrm{Rm}|} (x) := \sup \left\{ r > 0 \bigm| |\mathrm{Rm}| < r^{-2}~\mathrm{on}~B(x, r) \right\}.
\]
If $(M, g)$ is flat, then we set $r_{|\mathrm{Rm}|} = \infty.$
If $(M^{n}, g(t)_{t \in [0,T)})$ is a Ricci flow and
$(x, t) \in M \times [0,T),$
then $r_{|\mathrm{Rm}|} (x, t)$ is defined to the radius $r_{|\mathrm{Rm}|} (x)$ on the Riemannian manifold $(M, g(t)).$
\end{defi}

\begin{rema}
\label{curvrad}
The curvature radius $r_{|\mathrm{Rm}|} (x,t)$ is 1-Lipschitz in the variable $x$
(see {\cite[p. 768]{bamler2018convergence}}, see also {\cite[Theorem~2.2]{buzano2020local}}).
On the other hand, Buzano and Di Matteo \cite{buzano2020local} recently defined Riemann and Ricci scales of certain different notions. 
\end{rema}

Next, we will state the following backward pseudolocality theorem by Bamler
which will be needed in the proof of Main Theorem \ref{maintheo}:

\begin{lemm}[Backward Pseudolocality~{\cite[Theorem~1.48]{bamler2020structure}}]
\label{pseudo}
For any $n \in \mathbb{N}$ and $\alpha > 0$ there exists an $\varepsilon (n, \alpha) > 0$
such that the following holds:
Let $(M^{n}, g(t))_{t \in [0,T)}$ be a Ricci flow on a closed, $n$-dimensional smooth manifold and
let $(x_{0}, t_{0}) \in M \times [0,T)$ a point and $r >0$ a scale such that $[t_{0} - r^{2}, t_{0}] \subset [0,T)$
and
\[
\mathrm{Vol}_{t_{0}} \left( B(x_{0}, t_{0}, r) \right) \ge \alpha r^{n},~~~|\mathrm{Rm}| \le (\alpha r)^{-2}~~~\mathrm{on}~~~B(x_{0}, t_{0}, r).
\]
Then
\[
|\mathrm{Rm}| \le (\varepsilon r)^{-2}~\mathrm{on}~P \left( x_{0}, t_{0} ; (1 - \alpha) r, -(\varepsilon r)^{2} \right),
\]
where $P \left( x_{0}, t_{0} ; (1 - \alpha) r, -(\varepsilon r)^{2} \right)$ denotes the backward parabolic ball centered at $(x_{0}, t_{0})$
of radius $(1 - \alpha) r$ defined as in Lemma \ref{bamler6.1}.
\end{lemm}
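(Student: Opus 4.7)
The plan is to argue by contradiction following the standard blow-up strategy for pseudolocality-type statements, but making use of Bamler's Nash entropy machinery in place of Perelman's original isoperimetric input (since the hypothesis here is phrased in terms of volume non-collapsing rather than almost-Euclideanness). Suppose the conclusion fails for some fixed $n$ and $\alpha > 0$. Then there exist a sequence of closed Ricci flows $(M_i, g_i(t))$, basepoints $(x_i, t_i)$, and scales $r_i > 0$ satisfying
\[
\mathrm{Vol}_{t_i}(B(x_i, t_i, r_i)) \ge \alpha r_i^n,\qquad |\mathrm{Rm}| \le (\alpha r_i)^{-2} \text{ on } B(x_i,t_i,r_i),
\]
yet the conclusion fails with $\varepsilon = 1/i$. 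After parabolic rescaling we normalize $r_i = 1$ and $t_i = 0$, so there exist $(y_i, s_i) \in P(x_i, 0; 1-\alpha, -i^{-2})$ with $|\mathrm{Rm}|(y_i, s_i) > i^2$.

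First I would apply a point-picking argument in the backward parabolic neighborhood (of the Perelman/Hamilton type, using that $r_{|\mathrm{Rm}|}$ is $1$-Lipschitz in space, Remark \ref{curvrad}) to replace $(y_i, s_i)$ by a new sequence $(y_i', s_i')$ whose curvature $Q_i := |\mathrm{Rm}|(y_i',s_i') \to \infty$ and which essentially maximizes the curvature in a parabolic neighborhood of size $\asymp Q_i^{-1/2}$ while staying inside the region controlled at time $0$. Rescaling by $Q_i$ and recentering at $(y_i', s_i')$ produces pointed flows $(M_i, \tilde g_i(t), (y_i', 0))$ with $|\mathrm{Rm}|(y_i', 0) = 1$ and $|\mathrm{Rm}| \le 2$ on a parabolic neighborhood of radius growing to infinity.

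Next I would establish non-collapsing along this blow-up sequence. The time-$0$ hypothesis $\mathrm{Vol}_{0}(B(x_i,0,1)) \ge \alpha$ combined with the curvature bound on $B(x_i,0,1)$ yields a uniform lower bound on the pointed Nash entropy $\mathcal{N}_{(x_i,0)}$ at unit scale, and Bamler's monotonicity of the pointed Nash entropy then propagates this lower bound to $\mathcal{N}_{(y_i',s_i')}$ at the rescaled unit scale, hence uniform $\kappa$-non-collapsedness of the rescaled flows at the basepoints. I would then invoke the Hamilton-type compactness theorem (using the uniform local curvature bound and non-collapsing) to extract a smooth limiting ancient Ricci flow $(M_\infty, g_\infty(t), y_\infty)$ with $|\mathrm{Rm}|(y_\infty, 0) = 1$, defined on a backward time interval of infinite length.

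The main obstacle, and the heart of the argument, is to derive a contradiction from this limit. Tracking the time-$0$ data: the rescaled points $y_i'$ approach $x_i$ in parabolic distance at rate $\to 0$, so after passing to the limit the slice at a suitable positive time $t_\infty > 0$ (corresponding to the original $t = 0$) must be flat, in fact isometric to Euclidean $\mathbb{R}^n$ locally at $y_\infty$, by the curvature bound $|\mathrm{Rm}| \le (\alpha r_i)^{-2}$ rescaled. Since a smooth Ricci flow with a time slice isometric to flat space on an open set is flat on that set, backward uniqueness of Ricci flow (Kotschwar) forces $g_\infty$ to be Ricci-flat throughout a neighborhood of $y_\infty$ in spacetime, contradicting $|\mathrm{Rm}|(y_\infty, 0) = 1$. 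Executing the forward propagation of flatness from $t_\infty$ back to $t=0$ in the limit cleanly is the delicate point, and this is where one relies on Bamler's $\mathbb{F}$-convergence framework to ensure that the limit actually sees the controlled time-$0$ slice rather than losing it at infinity.
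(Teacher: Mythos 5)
The paper does not prove this lemma: it is quoted verbatim from Bamler, cited as {\cite[Theorem~1.48]{bamler2020structure}}, and used as a black box. So there is no ``paper's own proof'' to compare against; what you have written must stand on its own, and as written it has genuine gaps.

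Your blow-up strategy runs into a dichotomy you never resolve. After point-picking and rescaling by $Q_i=|\mathrm{Rm}|(y_i',s_i')$, the original time-$0$ slice sits at rescaled time $\tau_i:=Q_i\,|s_i'|$. Your contradiction requires $\tau_i$ to converge to some finite $t_\infty>0$ so that the limit ancient flow actually contains a flat later slice. But $|s_i'|\le i^{-2}$ while $Q_i\to\infty$, and nothing in the argument pins down the product. If $\tau_i\to\infty$ the controlled time-$0$ data is lost at time infinity and you never see a flat slice; if $\tau_i\to 0$ you would need a backward-in-time regularity estimate (curvature cannot jump from $\to 0$ at time $\tau_i$ to $1$ at time $0$) which is precisely the statement being proved, so the argument is circular in that regime. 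You flag this as ``the delicate point,'' but it is more than delicate---it is the entire difficulty, and appealing to the name ``$\mathbb{F}$-convergence'' does not fill it. In Bamler's actual proof the contradiction is extracted from the partial regularity and tangent-flow structure of $\mathbb{F}$-limits (which are metric flows with a controlled singular set, not smooth Cheeger--Gromov--Hamilton limits), and that machinery is exactly what handles the case where the controlled slice escapes to infinity; your sketch substitutes smooth compactness and so cannot reach that case.

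The backward-uniqueness step is also not justified as stated. Kotschwar's theorem requires two \emph{complete} Ricci flows with \emph{bounded} curvature agreeing on an entire final time slice; here the limit is only a pointed local limit, you have no global curvature bound on it, and you only know the later slice is flat on a ball (from the rescaled hypothesis $|\mathrm{Rm}|\le(\alpha r_i)^{-2}$, which lives on a ball of fixed original radius, not on all of $M_\infty$). Deducing ``flat on a ball at a later time $\Rightarrow$ flat on a spacetime neighborhood at earlier times'' is a backward-in-time unique continuation statement on a non-complete domain, and again that is essentially the content of backward pseudolocality itself. Finally, the point-picking step is asserted casually, but Perelman/Hamilton point-picking in a \emph{backward} parabolic set is not automatic: the usual argument propagates curvature control forward in time from the selected point, whereas here you have no a priori control going further backward, so the resulting curvature bound $|\mathrm{Rm}|\le 2$ on a parabolic neighborhood of expanding size is unjustified without additional input.

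In short: the high-level ``contradiction + blow-up + non-collapsing via Nash entropy'' outline is consistent with the modern viewpoint, but the concrete execution misses the central obstruction that the controlled time slice may escape to infinity under rescaling, replaces Bamler's $\mathbb{F}$-limit partial regularity by a smooth limit that does not exist in general, and invokes backward uniqueness outside its hypotheses. The lemma should simply be cited, as the paper does.
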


We also need the following lemma to analyse the singularities:
\begin{lemm}[{\cite[Lemma~7.1]{bamler2017heat}}]
\label{picking}
Assume $(a')$ with $n \ge 4.$
Then there exists a constant $\delta > 0,$ which only depends on $n,~g(0)$ and $T$
such that the following holds:
Let $(x, t) \in M \times [0,T)$ and assume that $r_{|\mathrm{Rm}|}^{2} (x,t) \le t.$
Then there exists a point $y \in M$ such that 
\[
(1)~d_{t}(x, y) < 2 r_{|\mathrm{Rm}|}(x, t),
\]
\[
(2)~r_{|\mathrm{Rm}|}(y,t) \le r_{|\mathrm{Rm}|}(x,t),
\]
\[
(3)~\int_{B(y,t,\frac{1}{8} r_{|\mathrm{Rm}|}(y,t))} |\mathrm{Rm}|^{2} dvol_{t} > \delta.
\]
\end{lemm}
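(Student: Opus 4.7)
The plan is a contradiction argument combining a Perelman-type point-picking iteration with a Cheeger--Gromov blow-up, modelled on \cite[Lemma~7.1]{bamler2017heat}. Suppose the statement fails: then for each $k \in \mathbb{N}$ with $\delta_k := 1/k$ one can fix a counter-example $(x_k, t_k) \in M \times [0,T)$ with $r_{|\mathrm{Rm}|}(x_k, t_k)^2 \le t_k$ for which every $y \in M$ satisfying (1) and (2) also satisfies $\int_{B(y, t_k, r_{|\mathrm{Rm}|}(y, t_k)/8)} |\mathrm{Rm}|^2 dvol_{t_k} \le \delta_k$. Fix once and for all a universal constant $A \in (0, 1]$ and a base point $y_k^{(0)} := x_k$. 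Iteratively, whenever there exists $z \in B(y_k^{(j)}, t_k, A \cdot r_{|\mathrm{Rm}|}(y_k^{(j)}, t_k))$ with $r_{|\mathrm{Rm}|}(z, t_k) \le \tfrac{1}{2} r_{|\mathrm{Rm}|}(y_k^{(j)}, t_k)$, set $y_k^{(j+1)} := z$; otherwise halt. Compactness of $M$ together with the pointwise positivity of $r_{|\mathrm{Rm}|}$ on any smooth Riemannian manifold forces termination at some $y_k$, and the geometric series estimate $\sum_{j \ge 0} 2^{-j} = 2$ yields $d_{t_k}(x_k, y_k) < 2A \cdot r_{|\mathrm{Rm}|}(x_k, t_k) \le 2\, r_{|\mathrm{Rm}|}(x_k, t_k)$. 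Thus (1) and (2) hold, and, writing $r_k := r_{|\mathrm{Rm}|}(y_k, t_k)$, the halting criterion gives $r_{|\mathrm{Rm}|}(z, t_k) > \tfrac{1}{2} r_k$ for every $z \in B(y_k, t_k, A r_k)$, so in particular $|\mathrm{Rm}|_{g(t_k)} \le 4 r_k^{-2}$ on that ball.

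Next I would carry out the parabolic rescaling $\tilde g_k(s) := r_k^{-2} g(t_k + r_k^2 s)$. In this rescaled flow one has $r_{|\mathrm{Rm}|}^{\tilde g_k}(y_k, 0) = 1$ and $|\mathrm{Rm}|_{\tilde g_k} \le 4$ on $\tilde B_{\tilde g_k(0)}(y_k, A)$. Since condition $(a')$ is scale invariant, Lemma \ref{non-collapsing} transfers uniformly to each $\tilde g_k$ and provides a uniform volume lower bound at unit scale. Feeding these two inputs into the backward pseudolocality Lemma \ref{pseudo}, I obtain a uniform curvature bound on a backward parabolic neighborhood $\tilde P(y_k, 0, A/2, -\varepsilon^2)$ for some $\varepsilon > 0$ depending only on $n$ and $A$; the hypothesis $r_{|\mathrm{Rm}|}(x_k, t_k)^2 \le t_k$ is precisely what guarantees that this rescaled parabolic neighborhood sits inside the flow. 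Shi's estimates then yield uniform bounds on all covariant derivatives of $\mathrm{Rm}$, and Hamilton's compactness theorem produces a smooth pointed Cheeger--Gromov--Hamilton limit Ricci flow $(M_\infty, g_\infty(s), y_\infty)$ on a backward parabolic neighborhood of $(y_\infty, 0)$ with $r_{|\mathrm{Rm}|}^{g_\infty}(y_\infty, 0) = 1$; in particular the limit is not flat at $y_\infty$.

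To complete the contradiction, I would show $|\mathrm{Rm}|_{g_\infty(0)} \equiv 0$ on a neighborhood of $y_\infty$, which is incompatible with $r_{|\mathrm{Rm}|}^{g_\infty}(y_\infty, 0) = 1 < \infty$. For each $z_\infty$ close to $y_\infty$ with sufficiently small $r_{|\mathrm{Rm}|}^{g_\infty}(z_\infty, 0)$, the $1$-Lipschitz property of the curvature radius (Remark \ref{curvrad}) and smooth convergence furnish approximants $z_k \to z_\infty$ in the original flow satisfying (1) and (2) relative to $x_k$. Combining the counter-example hypothesis with the change-of-variables identity
\[
\int_{\tilde B(z_k, 0, r_{|\mathrm{Rm}|}^{\tilde g_k}(z_k, 0)/8)} |\mathrm{Rm}|_{\tilde g_k}^2\, d\widetilde{vol}_k = r_k^{4-n} \int_{B(z_k, t_k, r_{|\mathrm{Rm}|}(z_k, t_k)/8)} |\mathrm{Rm}|^2\, dvol_{t_k} \le r_k^{4-n}\, \delta_k,
\]
I would conclude that the rescaled integrals vanish in the limit. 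In dimension $n = 4$ the prefactor is $1$ and the right-hand side tends to $0$ as $k \to \infty$, so passing to the Cheeger--Gromov limit and varying $z_\infty$ gives $|\mathrm{Rm}|_{g_\infty(0)} \equiv 0$ near $y_\infty$; for $n \ge 5$ one must additionally dominate the prefactor $r_k^{4-n}$, which in the setting of Main Theorem \ref{maintheo2} can be handled via the global $L^{n/2}$-bound on $\mathrm{Rm}$ and H\"older's inequality. The main obstacle is orchestrating the point-picking and the successive application of non-collapsing and backward pseudolocality so that a smooth pointed Cheeger--Gromov limit exists and the counter-example bound both survives the parabolic rescaling and can be fed into the limit point by point; the hypothesis $r_{|\mathrm{Rm}|}(x, t)^2 \le t$ is exactly what supplies enough past time to run the backward pseudolocality argument after rescaling.
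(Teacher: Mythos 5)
Your overall strategy — argue by contradiction, run a Perelman-type point-picking, rescale by the curvature radius, invoke non-collapsing plus backward pseudolocality plus Shi to extract a smooth Cheeger--Gromov--Hamilton limit, and then show the counterexample hypothesis forces the limit to be flat — is exactly what the paper does, since the paper's own proof is a one-line deferral to \cite[Lemma~7.1]{bamler2017heat} after observing that Lemma~\ref{non-collapsing} supplies the $\kappa$-non-collapsing that Bamler--Zhang assume. So the skeleton is right. However, two concrete steps do not hold up as written.

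First, the prefactor for $n \ge 5$. Under the rescaling $\tilde g_k = r_k^{-2} g(t_k + r_k^2 s)$ you correctly compute
\[
\int_{\tilde B} |\mathrm{Rm}|_{\tilde g_k}^2\, d\widetilde{vol}_k = r_k^{4-n}\int_{B}|\mathrm{Rm}|^2\, dvol_{t_k} \le r_k^{4-n}\delta_k.
\]
When $n \ge 5$ and $r_k \to 0$ (blow-up), the factor $r_k^{4-n}$ diverges, so $\delta_k \to 0$ alone does \emph{not} make the rescaled integrals vanish. Your H\"older remark does not rescue this: with $\mathrm{Vol}(B)\lesssim r_k^n$ and H\"older you obtain $r_k^{4-n}\int_B|\mathrm{Rm}|^2 \lesssim \|\mathrm{Rm}\|_{L^{n/2}(B)}^2$, and there is no mechanism to make the right-hand side go to zero — the counterexample hypothesis controls the $L^2$ integral, not the $L^{n/2}$ integral. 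The correct fix is the one the paper itself applies in the proof of Main Theorem~\ref{maintheo2}: replace $|\mathrm{Rm}|^2$ in (3) by the scale-invariant $|\mathrm{Rm}|^{n/2}$. As written, your argument establishes the lemma only for $n=4$, which is also the only regime in which the conclusion with $|\mathrm{Rm}|^2$ is actually used (Lemma~\ref{l-2}).

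Second, the non-flatness of the limit. You assert $r_{|\mathrm{Rm}|}^{g_\infty}(y_\infty,0)=1$ and use this to conclude the limit is not flat, but Cheeger--Gromov convergence only gives $r_{|\mathrm{Rm}|}^{g_\infty}(y_\infty,0)\ge 1$, and a flat limit has $r_{|\mathrm{Rm}|}=\infty$ by convention; so as stated the step is circular. The point that realizes $r_{|\mathrm{Rm}|}^{\tilde g_k}(y_k,0)=1$ sits at rescaled distance roughly $1$ from $y_k$, while your point-picking (with $A\le 1$, forced by the $2\,r_{|\mathrm{Rm}|}(x,t)$ budget in (1)) plus backward pseudolocality only give you a limit domain of rescaled radius strictly below $1$; the witnessing point may escape. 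Relatedly, when you transfer to approximants $z_k \to z_\infty$, condition (2), namely $r_{|\mathrm{Rm}|}(z_k,t_k)\le r_{|\mathrm{Rm}|}(x_k,t_k)$, is not automatic: the $1$-Lipschitz bound gives only $r_{|\mathrm{Rm}|}(z_k,t_k)\le (1+A)r_k$, which can exceed $r_{|\mathrm{Rm}|}(x_k,t_k)$, so you cannot invoke the counterexample hypothesis at those $z_k$ without further argument. To make both of these work one has to exploit the slack left over in (1) after the point-picking terminates (the remaining budget is at least $2A\,r_k$, which in rescaled units is of order $1$) and to argue more carefully about the range of $r_{|\mathrm{Rm}|}$ on the covering centers, as is done in the Bamler--Zhang proof; your sketch glosses over exactly this bookkeeping, which is where the real content of the lemma lies.
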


\begin{proof}
Since we have the non-collapsing estimate Lemma \ref{non-collapsing},
we can prove this assertion in exactly the same way as in {\cite[Proof~of~Lemma~7.1]{bamler2017heat}}.
\end{proof}

\begin{lemm}[$L^{2}$-curvature estimate in dimension 4~(cf.~{\cite[Theorem~1.8]{bamler2017heat}})]
\label{l-2}
Assume $(a'')$, (b) and $n = 4.$
Then there exists $A, B < + \infty,$
which depend on $C_{2}, p_{0}$ and $(M,g(0))$
such that the following holds
(Here, $C_{2}, p_{0}$ denote respectively the upper bound in $(b)$ stated in section 1 and the exponent of norm in the assumption $(a'')$):

\noindent
For all $t \in [T/2, T)$ the following bounds hold:
\[
|| \mathrm{Rm} (\cdot, t)||_{L^{2}(M, g(t))} \le A\,\chi(M) + B\,\mathrm{Vol}_{0} (M),
\]
where $\chi(M)$ denotes the Euler characteristic of $M.$

\noindent
For any $p \in (0, 4)$ and $t \in [0,T),$
\[
\int_{M} r_{|\mathrm{Rm}|}^{-p} (x,t) dvol_{t} \le A\,\chi(M) + \frac{B}{4-p} \mathrm{Vol}_{0}(M),
\] 
and for any $p \in (0, \frac{4 p_{0}}{2 + p_{0}}),$
\[
\int_{M} |\mathrm{Ric}|^{p}(x,t) dvol_{t} \le A\,\chi(M) + \frac{B}{4-p} \mathrm{Vol}_{0}(M).
\]

\noindent
Finally, for all $0 < s \le 1$ and $t \in [0,T),$
\[
\begin{split}
\frac{\mathrm{Vol}_{t} \left( \left\{ |\mathrm{Ric}|^{\frac{p_{0}}{2 + p_{0}}} (\cdot, t) \ge s^{-1} \right\} \right)}{s^{4}}
&+ \frac{\mathrm{Vol}_{t} \left( \left\{ r_{|\mathrm{Rm}|} (\cdot, t) \le s \right\} \right)}{s^{4}} \\
&\le A\,\chi(M) + B\,\mathrm{Vol}_{0}(M).
\end{split}
\]
\end{lemm}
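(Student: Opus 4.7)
The plan is to adapt the proof of \cite[Theorem~1.8]{bamler2017heat} to the present weaker assumptions. The scheme has three layers: use Chern--Gauss--Bonnet in dimension four to reduce the $L^{2}$-bound on $\mathrm{Rm}$ to bounds on $\int|\mathrm{Ric}|^{2}$ and $\int R^{2}$; use a covering argument based on Lemma \ref{picking} to convert that $L^{2}$-bound into integrability of $r_{|\mathrm{Rm}|}^{-p}$ for $p<4$; and finally use the improved pointwise Ricci estimate of Lemma \ref{bamler6.1} to deduce the $L^{p}$-bounds on $\mathrm{Ric}$ and the distribution estimates. The main departure from \cite{bamler2017heat} is that $|R|$ is not assumed uniformly bounded in $(x,t)$, and the improved exponent $-1-2/p_{0}$ in Lemma \ref{bamler6.1} is exactly what compensates.

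For the $L^{2}$-bound on $\mathrm{Rm}$, Chern--Gauss--Bonnet in dimension four yields an identity of the shape
\[
\int_{M}|\mathrm{Rm}|^{2}\,dvol_{t} = 32\pi^{2}\chi(M)+4\int_{M}|\mathrm{Ric}|^{2}\,dvol_{t}-\int_{M}R^{2}\,dvol_{t},
\]
so, in view of $(a)$ (which is implied by $(a'')$, as noted after Lemma \ref{non-collapsing}), the task reduces to bounding $\int|\mathrm{Ric}|^{2}\,dvol_{t}$ uniformly for $t\in[T/2,T)$. I would first integrate the standard identity $\frac{d}{dt}\int_{M}R\,dvol_{t}=\int_{M}(2|\mathrm{Ric}|^{2}-R^{2})\,dvol_{t}$ on $[0,T)$, controlling the boundary terms through $(b)$ and Lemma \ref{volume} and the $R^{2}$-term through $(a)$, to obtain the space-time bound $\int_{0}^{T}\!\int_{M}|\mathrm{Ric}|^{2}\,dvol\,dt<\infty$. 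A Chebyshev/mean-value selection then produces a time $t_{*}\in[0,T/2]$ with $\int|\mathrm{Ric}(t_{*})|^{2}\,dvol_{t_{*}}$ uniformly bounded, and this pointwise-in-time bound is propagated to $[T/2,T)$ through the evolution of $\int|\mathrm{Rm}|^{2}$, using Lemma \ref{bamler6.1} to absorb the cubic $\mathrm{Rm}*\mathrm{Ric}^{2}$-terms and the Chern--Gauss--Bonnet identity to relate $\int|\mathrm{Rm}|^{2}$ back to $\int|\mathrm{Ric}|^{2}$.

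For the integral estimate on $r_{|\mathrm{Rm}|}^{-p}$, I would dyadically decompose $M=\bigsqcup_{k}A_{k}$ with $A_{k}:=\{2^{-k-1}<r_{|\mathrm{Rm}|}(\cdot,t)\le 2^{-k}\}$. On each $A_{k}$ with $2^{-k}\le\sqrt{t}$, Lemma \ref{picking} produces, for every $x\in A_{k}$, a nearby point $y_{x}$ with $\int_{B(y_{x},t,r_{|\mathrm{Rm}|}(y_{x},t)/8)}|\mathrm{Rm}|^{2}>\delta$. A Vitali-type extraction gives a disjoint subfamily whose controlled enlargement covers $A_{k}$, with cardinality bounded by $\delta^{-1}\int_{M}|\mathrm{Rm}|^{2}\,dvol_{t}$; each ball has $g(t)$-volume $\le\kappa_{1}(2^{-k})^{4}$ by Lemma \ref{non-inflating}. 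Summing yields $\mathrm{Vol}_{t}(A_{k})\le C(A\chi(M)+B\mathrm{Vol}_{0}(M))\cdot 2^{-4k}$, and hence $\int_{M}r_{|\mathrm{Rm}|}^{-p}\,dvol_{t}\le\sum_{k}2^{(k+1)p}\mathrm{Vol}_{t}(A_{k})+\mathrm{Vol}_{t}(M)$, producing the desired $\frac{1}{4-p}$-blowup as $p\uparrow 4$ (the region $\{r_{|\mathrm{Rm}|}^{2}>t\}$ is absorbed into the volume term via Lemma \ref{volume}). The pointwise inequality $|\mathrm{Ric}|(x,t)\le A_{0}\,r_{|\mathrm{Rm}|}(x,t)^{-1-2/p_{0}}$ from Lemma \ref{bamler6.1} converts this into $\int|\mathrm{Ric}|^{p}\le A_{0}^{p}\int r_{|\mathrm{Rm}|}^{-p(p_{0}+2)/p_{0}}$; the range $0<p<\frac{4p_{0}}{p_{0}+2}$ is exactly what forces the new exponent into $(0,4)$. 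The two distribution inequalities are then the content of the covering argument before integration (for $r_{|\mathrm{Rm}|}$) and its composition with Lemma \ref{bamler6.1} (for $|\mathrm{Ric}|^{p_{0}/(p_{0}+2)}$).

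The main obstacle is the passage from a space-time integral bound to a pointwise-in-time bound in the first stage: in \cite{bamler2017heat} the uniform scalar-curvature bound makes this step essentially free, whereas under the integrated bound $(b)$ the scalar curvature may blow up as $t\to T$, and one must instead rely on Lemma \ref{bamler6.1} combined with a bootstrap through the Chern--Gauss--Bonnet identity to produce the uniform-in-$t$ bound on $[T/2,T)$.
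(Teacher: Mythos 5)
Your plan diverges from the paper's proof in the crucial first stage, and the divergence introduces a genuine gap. The paper does \emph{not} first establish a standalone uniform-in-time bound on $\int_M |\mathrm{Ric}|^2\,dvol_t$ and then feed it into Chern--Gauss--Bonnet. Instead it runs a single covering argument at a fixed time $t$: from Lemma~\ref{picking} and Vitali it produces $N$ balls $B(y_i,t,r_i/8)$, pairwise disjoint, each carrying $L^2$-curvature at least $\delta$; then it shows $\{r_{|\mathrm{Rm}|}(\cdot,t)\le\bar r\}=D_1\cup\cdots\cup D_N$ with $r_{|\mathrm{Rm}|}>\tfrac18 d_t(\cdot,y_i)$ on $D_i$, obtaining $\mathrm{Vol}_t(\{r_{|\mathrm{Rm}|}\le s\})\le 8^4\kappa_1 N s^4$ and hence $\int r_{|\mathrm{Rm}|}^{-p}\le\bar r^{-p}\mathrm{Vol}_t(M)+CN\frac{p}{4-p}\bar r^{4-p}$. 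Plugging the pointwise Ricci bound from Lemma~\ref{bamler6.1} into the $p=2+4/p_0$ case, then inserting everything into Chern--Gauss--Bonnet and using the disjointness lower bound $N\delta<\int|\mathrm{Rm}|^2$, one obtains an inequality in which the only $N$-dependent term on the right carries a factor $\bar r^{2-4/p_0}$; choosing $\bar r$ small absorbs that term and yields $N<A\chi(M)+B\mathrm{Vol}_0(M)$. All five estimates then fall out of this bound on $N$ at once.

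Your proposed first stage --- integrate $\tfrac{d}{dt}\int R=\int(2|\mathrm{Ric}|^2-R^2)$ to get a space-time $L^2$-Ricci bound, pick a good time slice by Chebyshev, then "propagate" to $[T/2,T)$ through the evolution of $\int|\mathrm{Rm}|^2$ --- does get the space-time bound (indeed $|\int_M R\,dvol_t|\le\|R\|_{L^2}\,\mathrm{Vol}_t(M)^{1/2}$ is uniformly bounded under $(a)$ and Lemma~\ref{volume}), but the propagation step has a circularity. The evolution of $\int|\mathrm{Rm}|^2$ produces $\int\mathrm{Rm}*\mathrm{Rm}*\mathrm{Rm}$-type terms, not $\mathrm{Rm}*\mathrm{Ric}^2$, and to control these by Lemma~\ref{bamler6.1} you need pointwise control of $r_{|\mathrm{Rm}|}$ across $M$ and hence a distributional estimate on the set $\{r_{|\mathrm{Rm}|}\le s\}$ --- which is exactly what your scheme defers to a later stage. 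In the paper there is no such circularity because the $N$-bootstrap never requires an a priori uniform $\int|\mathrm{Ric}|^2$ bound: $N$ itself is the scalar quantity that ties the covering, the distribution estimate, and the Gauss--Bonnet identity together, and the $\bar r$-smallness makes that linear system in $N$ solvable. To salvage your plan you would need either a separate argument producing the uniform-in-$t$ $L^2$-Ricci bound (not just a good slice), or you should follow the paper and let $N$ be the free variable that gets bounded after absorption. The second stage of your proposal (dyadic decomposition via Lemma~\ref{picking}, non-inflating volume, and the exponent shift $p\mapsto p(p_0+2)/p_0$ for the Ricci integral) is essentially the paper's argument repackaged and is fine once $N$ is bounded.
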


\begin{rema}
Simon \cite{simon2015some} showed that
for closed four-dimensional Ricci flow with uniformly bounded  scalar curvature on a finite time interval $I$,
$\sup_{t \in I} ||\mathrm{Rm}||_{L^{2}(M)}$ and $|| \mathrm{Ric} ||_{L^{4}(M \times I)}$ are bounded.
This is proved by using the uniformly scalar curvature bound
and observating the evolution of the quantity
$\frac{|\mathrm{Ric}|^{2}}{R + 2}$
under the Ricci flow(see \cite{simon2015some} for more detail).
\end{rema}

\begin{proof}[Proof of Lemma \ref{l-2}]
This lemma can be proven in almost the same way as in {\cite[Proof~of~Theorem~1.8]{bamler2017heat}}.
To help understanding, we will describe these without omitting.

For the rest of this proof, fix some $t \in [T/2, T).$
Let $\delta > 0$ be the positive constant from Lemma \ref{picking} and set
\[
S := \left\{ y \in M \biggm| r_{|\mathrm{Rm}|}(y,t) < \bar{r}~\mathrm{and}~
\int_{B(y,t, \frac{1}{8} r_{|\mathrm{Rm}|})} |\mathrm{Rm}|^{2} dvol_{t} > \delta \right\}.
\]
Here, $\bar{r}$ is a fixed parameter determined later.
By the Vitali's covering theorem (see Lemma \ref{vitali} in Appendix of this paper), the upper volume bound(Lemma \ref{volume}) and the non-collapsing estimate(Lemma \ref{non-collapsing}),
there exist finitely many points $y_{1}, \cdots, y_{N} \in S$ such that for 
\[
r_{i} := r_{|\mathrm{Rm}|}(y_{i},t) < \bar{r},
\]
the balls $B \left( y, t, \frac{1}{8} r_{i} \right)$ are pairwise disjoint and 
\[
\bigcup^{N}_{i=1} B ( y_{i}, t, \frac{1}{2} r_{i} ) \supset \bigcup_{y \in S} B ( y, t, \frac{1}{8} r_{|\mathrm{Rm}|}(y, t) )
\supset S.
\] 

Then, for every $x \in M$ with $r_{|\mathrm{Rm}|}(x, t) \le \bar{r}$ there exists an $i \in \{ 1, \cdots, N \}$ for which
\[
d_{t}(x, y_{i}) < 2 r_{|\mathrm{Rm}|}(x, t) + \frac{1}{2} r_{i}.
\]
In fact, let  $x \in M$ with $r_{|\mathrm{Rm}|}(x, t) \le \bar{r}.$
By Lemma \ref{picking}, there exists a point $y \in M$ with $d_{t}(x,y) < 2 r_{|\mathrm{Rm}|}(x, t)$
and $r_{|\mathrm{Rm}|}(y, t) \le r_{|\mathrm{Rm}|}(x, t) < \bar{r}$ that satisfies the lower bound on the integral in the definition of $S.$ So $y \in S.$
By the above inclusion for covering of $S,$
there exists an index $i \in \{ 1, \cdots, N \}$ for which $y \in B \left( y_{i}, t, \frac{1}{2} r_{i} \right).$
Hence we conclude that
\[
d_{t}(x, y_{i}) \le d_{t}(x, y) + d_{t}(y, y_{i}) < 2 r_{|\mathrm{Rm}|} (x, t) + \frac{1}{2} r_{i}.
\]   

Next, observe that for any $p > 0,$ by Fubini's theorem,
\[
\begin{split}
\int_{M} r_{|\mathrm{Rm}|}^{-p} (x, t) dvol_{t} 
&= \bigintsss_{M} \left( \bar{r}^{-p} + \int_{r_{|\mathrm{Rm}|}(x, t)}^{\bar{r}} ps^{-p-1}~ds \right)~dvol_{t}(x) \\
&\le \bar{r}^{-p} \mathrm{Vol}_{t}(M) 
+ \int^{\bar{r}}_{0} p s^{-p-1}  \mathrm{Vol}_{t} \left( \left\{ r_{|\mathrm{Rm}|}(\cdot, t) \le s \right\} \right)ds.
\end{split}
\]

Set for $i = 1, \cdots, N,$
\[
\begin{split}
D_{i} = \bigl\{ x \in M \bigm| &r_{|\mathrm{Rm}|} (x, t) \le \bar{r}~\mathrm{and}~\\
&d_{t}(x, y_{i}) - \frac{1}{2} r_{i} \le d_{t}(x, y_{j}) - \frac{1}{2} r_{j}~\mathrm{for~all}~j = 1, \cdots, N \bigr\}.
\end{split}
\]
Then 
\[
\{ r_{|\mathrm{Rm}|}(\cdot,, t) \le \bar{r} \} = D_{1} \cup \cdots \cup D_{N}.
\]
[The inclusion $(\supset)$ is trivial.
The opposite inclusion $(\subset)$ is proved as follows:
For any $x \in \{ r_{|\mathrm{Rm}|}(\cdot,, t) \le \bar{r} \},$
take $y_{i}$ such that $d_{t}(x, y_{i}) - \frac{1}{2} r_{i} = \min_{1 \le j \le N} \left( d_{t}(x, y_{j}) - \frac{1}{2} r_{j} \right).$]
It follows from the above claim that for all $x \in D_{i},$
\[
r_{|\mathrm{Rm}|} (x, t) > \frac{1}{2} \left( d_{t}(x, y_{i}) - \frac{1}{2} r_{i} \right).
\]
Hence, if $d_{t}(x, y_{i}) \ge \frac{3}{4} r_{i},$ then $r_{|\mathrm{Rm}|} (x, t) > \frac{1}{2} \left(d_{t}(x, y_{i}) - \frac{1}{2} \cdot \frac{4}{3} d_{t}(x, y_{i}) \right) > \frac{1}{8} d_{t}(x, y_{i}).$
On the other hand, by the definition of the curvature radius, we obtain that if 
$d_{t}(x, y_{i}) < \frac{3}{4} r_{|\mathrm{Rm}|}(y_{i}, t) = \frac{3}{4} r_{i},$
then $r_{|\mathrm{Rm}|}(x,t) \ge \frac{1}{4}r_{i} > \frac{1}{3} d_{t}(x, y_{i}) > \frac{1}{8} d_{t}(x, y_{i})$
[Rem.: On $B(x, t, \frac{1}{4} r_{i}),~|\mathrm{Rm}| < r_{i}^{-2} < (\frac{1}{4} r_{i})^{-2},$
hence, by the definition of $r_{|\mathrm{Rm}|},~r_{|\mathrm{Rm}|}(x, t) \ge \frac{1}{4} r_{i}$ ].
Therefore, in either cases, 
\[
r_{|\mathrm{Rm}|}(x, t) > \frac{1}{8} d_{t}(x, y_{i})~~\mathrm{if}~~x \in D_{i}.
\]
Using this inequality, we can compute that for any $s \le \bar{r},$
\[
\begin{split}
\mathrm{Vol}_{t} (\{ r_{|\mathrm{Rm}|}(\cdot, t) \le s \})
&\le \sum^{N}_{i=1} \mathrm{Vol}_{t} (\{ r_{|\mathrm{Rm}|}(\cdot, t) \le s \} \cap D_{i}) \\
&\le \sum^{N}_{i=1} \mathrm{Vol}_{t} (\{ d_{t}(\cdot, y_{i}) \le 8 s \} \cap D_{i}) \\
&\le \sum^{N}_{i=1} \mathrm{Vol}_{t} (B(y_{i}, t, 8 s))
\le ( 8^{4} \cdot \kappa_{1} ) \cdot N s^{4}.
\end{split}
\]
Here, we used the non-inflating estimate(Lemma \ref{non-inflating})
and $\kappa_{1}$ denotes the positive constant in it.
Hence, using the above expression, for any $0 < p < 4,$
\[
\begin{split}
\int_{M} r_{|\mathrm{Rm}|}^{-p} (x, t) dvol_{t} 
&\le \bar{r}^{-p} \mathrm{Vol}_{t}(M) 
+ \int^{\bar{r}}_{0} p s^{-p-1}  \mathrm{Vol}_{t}( \{ r_{|\mathrm{Rm}|}(\cdot, t) \le s \} )~ds \\
&\le \bar{r}^{-p} \mathrm{Vol}_{t}(M) + C(\kappa_{1}) N p \int^{\bar{r}}_{0} s^{3-p} ds \\
&\le \bar{r}^{-p} \mathrm{Vol}_{t}(M) + C N \frac{p}{4-p} \bar{r}^{4-p}.
\end{split}
\]
Next, we assume that $\bar{r} < \min \{ 1, \sqrt{T/2} \}$ and observe that by the non-collapsing estimate(Lemma \ref{non-collapsing}),
the backward pseudolocality theorem
(Lemma \ref{pseudo})
and Lemma \ref{bamler6.1}, there exists a universal constant $\tilde{C}$ such that for all $x \in M$ for which $r_{|\mathrm{Rm}|}(x, t) \le \bar{r},$ we have
\[
|\mathrm{Ric}|(x, t) \le \tilde{C} r_{|\mathrm{Rm}|}^{-1 - \frac{2}{p_{0}}}(x, t).
\]
Hence by the above inequality for $p = 2 + \frac{4}{p_{0}},$
\[
\begin{split}
\int_{M} |\mathrm{Ric}(x, t)|^{2} dvol_{t} &\le \tilde{C}^{2} \int_{M} r_{|\mathrm{Rm}|}^{-2 - \frac{4}{p_{0}}}(x, t) dvol_{t} \\
&\le \tilde{C}^{2} \bar{r}^{-2 - \frac{4}{p_{0}}} \mathrm{Vol}_{t}(M)  + C \tilde{C}^{2} N \bar{r}^{2 - \frac{4}{p_{0}}}.
\end{split}
\]
On the other hand, recall that by the choice of the $y_{i} \in S$ we have that for all $i = 1, \cdots, N,$
\[
\int_{B(y, t, \frac{1}{8} r_{i})} |\mathrm{Rm}(x, t)|^{2} dvol_{t}(x) > \delta
\]
and the domains of these integrals are pairwise disjoint. Therefore
\[
N \delta < \int_{M} |\mathrm{Rm} (x, t)|^{2} dvol_{t}(x).
\]
Finally, we apply the Chern-Gauss-Bonnet theorem (in dimension 4):
\[
\int_{M} |\mathrm{Rm}(x, t)|^{2} dvol_{t}(x) = 32 \pi^{2} \chi(M) + \int_{M} (4 |\mathrm{Ric}(x, t)|^{2} - R^{2}(x, t))~dvol_{t}(x).
\]
Combinig this equality and the above inequalities gives us
\[
N \delta < 32 \pi^{2} \chi(M) + 4 \tilde{C}^{2} \bar{r}^{-2 -\frac{4}{p_{0}}} \mathrm{Vol}_{t}(M) + 4 C \tilde{C}^{2} N \bar{r}^{2 - \frac{4}{p_{0}}}.
\]
We now choose $\bar{r} > 0$ small enough such that
\[
4 C \tilde{C}^{2} \bar{r}^{2 - \frac{4}{p_{0}}} < \frac{1}{2} \delta.
\]
Then, by the proof of Lemma \ref{volume},
\[
\begin{split}
N < \frac{32 \pi^{2} \chi(M) + 4 \tilde{C}^{2} \bar{r}^{-2 - \frac{4}{p_{0}}} \mathrm{Vol}_{t}(M)}{\frac{1}{2} \delta}
&=: A\,\chi(M) + \Tilde{B}\,\mathrm{Vol}_{t}(M) \\
&\le A\,\chi(M) + B\,\mathrm{Vol}_{0}(M).
\end{split}
\]
The assertion follows from the above inequalities (see {\cite[proof~of~Theorem~1.8]{bamler2017heat}}).
\end{proof}

From the proof of the above $L^{2}$-estimate, we obtain the following.
\begin{lemm}[cf.~{\cite[Lemma~7.2]{bamler2017heat}}]
\label{points}
Assume $(a'')$, (b) and $n = 4$.
Then there exists a natural number $N \in \mathbb{N}$ such that for every time $t \in [0,T),$
we can find points $y_{1,t}, \cdots, y_{N,t} \in M$ such that for any $x \in M$ we have
\[
r_{|\mathrm{Rm}|}(x,t) > \frac{1}{8} \min_{i = 1, \cdots, N} d_{t} (x, y_{i,t}).
\]
\end{lemm}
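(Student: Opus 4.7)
The plan is to re-use and slightly augment the construction carried out inside the proof of Lemma~\ref{l-2}. That proof already produces, for each $t \in [0,T)$, a collection of points $y_{1,t},\ldots,y_{N_{1},t}$ with $N_{1} \le A\,\chi(M)+B\,\mathrm{Vol}_{0}(M)$ uniformly bounded, together with a partition of the set $\{r_{|\mathrm{Rm}|}(\cdot,t)\le\bar r\}$ into regions $D_{i,t}$ on which the inequality $r_{|\mathrm{Rm}|}(x,t)>\tfrac{1}{8}\,d_{t}(x,y_{i,t})$ is verified. My first step is to extract exactly this collection and this inequality from the earlier argument, treating them as given.

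The remaining gap is that nothing so far controls points with $r_{|\mathrm{Rm}|}(x,t)>\bar r$. My second step is to add a bounded number of \emph{filler} points so that $\min_{i} d_{t}(x,y_{i,t})$ stays comparable to $\bar r$ throughout $M$. Concretely, I will pick a maximal $\bar r$-separated set $\{z_{j,t}\}_{j=1,\dots,N_{2}(t)}$ in $(M,g(t))$; then the $(\bar r/2)$-balls centered at these points are pairwise disjoint, while the $\bar r$-balls cover $M$. The non-collapsing estimate (Lemma~\ref{non-collapsing}) bounds each disjoint ball's volume from below by a fixed multiple of $\bar r^{\,4}$, and combined with the upper volume bound $V_{2}$ from Lemma~\ref{volume} this forces $N_{2}(t)$ to be bounded by a constant independent of $t$.

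With the combined collection $\{y_{i,t}\}\cup\{z_{j,t}\}$, relabeled and totalling some $N=N_{1}+\sup_{t}N_{2}(t)$ independent of $t$, the conclusion is verified in two cases. If $r_{|\mathrm{Rm}|}(x,t)\le\bar r$, the inequality is exactly the one obtained from the partition $\{D_{i,t}\}$ of Lemma~\ref{l-2}. Otherwise $r_{|\mathrm{Rm}|}(x,t)>\bar r$, and by the covering property some filler point $z_{j,t}$ lies within $\bar r$ of $x$, so $\tfrac{1}{8}\min_{i} d_{t}(x,y_{i,t}) < \tfrac{\bar r}{8} < r_{|\mathrm{Rm}|}(x,t)$, as required.

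The only point to check carefully is that $N$ can be chosen uniformly in $t\in[0,T)$; this is immediate because $\bar r$, the non-collapsing constant, $V_{2}$, and the count $N_{1}$ coming from Lemma~\ref{l-2} are all uniform on $[0,T)$. I do not anticipate a serious obstacle, since the lemma is essentially a repackaging of data already extracted during the proof of Lemma~\ref{l-2} together with a short Vitali-type covering argument.
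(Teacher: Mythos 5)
Your argument is correct, but it proceeds differently from the paper's proof. The paper avoids the filler-point step entirely: it reruns the construction from the proof of Lemma~\ref{l-2} with the $t$-dependent parameter $\bar{r} := \max_{M} r_{|\mathrm{Rm}|}(\cdot, t)$, so that $\{ r_{|\mathrm{Rm}|}(\cdot,t) \le \bar{r} \} = M$ and therefore $D_{1} \cup \cdots \cup D_{N} = M$ already, giving the pointwise inequality on all of $M$ at once; the uniform bound on $N$ then comes from $N\delta < \int_M |\mathrm{Rm}|^2\,dvol_t$ together with the $L^2$-estimate of Lemma~\ref{l-2}, a bound that does not depend on the choice of $\bar r$. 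Your version instead keeps the fixed small $\bar{r}$ already used in the proof of Lemma~\ref{l-2}, obtains the partition estimate only on $\{r_{|\mathrm{Rm}|}(\cdot,t) \le \bar r\}$, and patches the complement with a maximal $\bar{r}$-separated net whose cardinality is controlled through the non-collapsing estimate (Lemma~\ref{non-collapsing}) and the volume upper bound (Lemma~\ref{volume}). Both routes deliver a time-independent $N$; the paper's is shorter and invokes fewer auxiliary estimates, while yours has the modest advantage of not re-running the Vitali selection with a new, $t$-dependent parameter and makes the covering mechanism explicit. One small point worth tightening in your write-up: when you conclude for $r_{|\mathrm{Rm}|}(x,t) \le \bar r$, the inequality $r_{|\mathrm{Rm}|}(x,t) > \tfrac{1}{8} d_t(x, y_{i,t})$ supplied by membership in $D_{i,t}$ yields the desired bound only after observing $\min_j d_t(x,y_{j,t}) \le d_t(x,y_{i,t})$; and the filler-ball radius $\bar r/2$ must lie below the threshold $\min\{1,r\}$ of Lemma~\ref{non-collapsing}, which is ensured by the smallness of $\bar r$ fixed in the proof of Lemma~\ref{l-2}.
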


\begin{proof}
This lemma can be proven in the same way as in {\cite[Lemma~7.2]{bamler2017heat}}.

Consider the points $y_{1}, \cdots, y_{N}$ and the radii $r_{i} = r_{|\mathrm{Rm}|}(y_{i}, t)$
as constructed in the proof of the above $L^{2}$-estimate
for $\bar{r} := \max_{M} r_{|\mathrm{Rm}|}(\cdot, t).$
Then, from its definition, we have $D_{1} \cup \cdots \cup D_{N} = M.$
So, from the proof of the $L^{2}$-estimate again,
we obtain the desired inequality.
And, from the $L^{2}$-estimate of the Riemannian curvature, we obtain that
$N$ is bounded by a constant which is independent of time.
\end{proof}

Here, we will make sure definitions of convergence.
\begin{defi}[{\cite[Definition~7.3]{bamler2017heat}}]
Let $(M, g(t))_{t \in [0,T)}~(T < + \infty)$ be a Ricci flow.
A pointed, complete metric space $(X, d, x_{\infty}), x_{\infty} \in X$ is called a limit of $(M, g(t))$
if there exists a sequence of times $t_{k} \rightarrow T,$ a sequence of points $x_{k} \in M$
and a sequence of numbers $\lambda_{k} \ge 1$ such that the sequence of length metric spaces $\{ (M, \lambda_{k}^{2}g_{t_{k}}, x_{k}) \}$
converge to $(X, d, x_{\infty})$ in the Gromov-Hausdirff sence.
If $\lim_{k \rightarrow \infty} \lambda_{k} = \infty,$
then $(X, d, x_{\infty})$ is called a blowup limit.
\end{defi}

From the non-inflating and non-collapsing estimates which are scaling invariant,
(blowup) limits always exists:
\begin{lemm}[Existence of a limit]
\label{existence}
Let $(M^{4}, g(t))_{t \in [0,T)}~(T < \infty)$ be a 4-dimensional closed Ricci flow satisfying $(a'')$ and (b).
Consider arbitrary sequence $t_{k} \rightarrow T,~x_{k} \in M$ and $\lambda_{k} \ge 1.$
Then, after passing to a subsequence, $(M, \lambda_{k}^{2} g_{t_{k}}, x_{k})$ converges to a complete limit
$(X, d, x_{\infty})$ in the Gromov-Hausdorff sense. 
\end{lemm}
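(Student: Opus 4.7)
The plan is to invoke Gromov's precompactness theorem for pointed length spaces, whose hypothesis requires, for every $R,\varepsilon>0$, a uniform-in-$k$ bound on the number of $\varepsilon$-balls needed to cover the $R$-ball around $x_{k}$ in the rescaled metric $\tilde{g}_{k}:=\lambda_{k}^{2}g_{t_{k}}$. The crucial point is that the volume estimates we have available, namely the non-inflating estimate (Lemma \ref{non-inflating}) and the non-collapsing estimate (Lemma \ref{non-collapsing}, applied via the implication $(a'')\Rightarrow(a')$), are both scale-invariant and therefore survive the rescaling with uniform constants.

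First I would record the rescaled volume bounds. Under $\tilde{g}_{k}=\lambda_{k}^{2}g_{t_{k}}$, radii multiply by $\lambda_{k}$ and $n$-volumes by $\lambda_{k}^{n}$, so Lemmas \ref{non-inflating} and \ref{non-collapsing} translate to
\[
\delta\,s^{n}\,\le\,\mathrm{Vol}_{\tilde{g}_{k}}\bigl(\tilde{B}(y,s)\bigr)\,\le\,\kappa_{1}\,s^{n}\qquad\text{for all }s\in(0,\lambda_{k}\min\{1,r_{0}\}],
\]
uniformly in $k$ and $y\in M$, where $r_{0}$ is the radius from $(a'')$. Since $\lambda_{k}\ge 1$, these bounds already hold on a fixed positive interval independent of $k$. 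Passing to a subsequence we may separate into two subcases: either (i) $\lambda_{k}$ is bounded, in which case the uniform diameter bound from Lemma 2.3 places all $(M,\tilde{g}_{k})$ inside a ball of fixed radius around $x_{k}$ and the two-sided volume estimate on the full compact manifold gives the required uniform covering bound at once; or (ii) $\lambda_{k}\to\infty$, in which case the interval of validity of the volume estimate exhausts $(0,\infty)$.

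In case (ii) a routine packing argument finishes the proof. For fixed $R,\varepsilon>0$, take $k$ large enough that $\lambda_{k}\ge R+1$ and $\lambda_{k}\min\{1,r_{0}\}\ge \varepsilon/4$; then for any maximal $(\varepsilon/2)$-separated set $\{y_{j}\}\subset\tilde{B}(x_{k},R)$ the balls $\tilde{B}(y_{j},\varepsilon/4)$ are disjoint and contained in $\tilde{B}(x_{k},R+\varepsilon/4)$, so
\[
\#\{y_{j}\}\cdot\delta(\varepsilon/4)^{n}\;\le\;\kappa_{1}(R+\varepsilon/4)^{n},
\]
and maximality bounds the $\varepsilon$-covering number of $\tilde{B}(x_{k},R)$ by the same quantity. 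Gromov's compactness theorem (each $(M,\tilde{g}_{k})$ being a compact, hence complete, length space) then provides the desired subsequential limit $(X,d,x_{\infty})$, complete as a pointed Gromov--Hausdorff limit of complete length spaces. The only real obstacle is the bookkeeping required to verify that the volume bounds rescale uniformly and to handle the two cases for $\lambda_{k}$; once these are in place, the compactness step is standard.
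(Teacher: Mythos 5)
Your proposal is correct and follows essentially the same route as the paper: both handle the bounded-$\lambda_k$ case via the diameter and total-volume bounds, and the $\lambda_k\to\infty$ case via the scale-invariant non-inflating and non-collapsing estimates together with (pointed) Gromov precompactness. The only difference is cosmetic — you unpack the packing argument explicitly, whereas the paper cites the relevant precompactness theorems from Burago--Burago--Ivanov.
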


\begin{proof}
From the non-collapsing estimates, the diameter upper bound and the volume upper bound, we can show that
every sequence taken from the flow
is uniformly totally bounded (see {\cite[Definition~7.4.13]{burago2001course}} for the definition of this notion).
Hence, using Theorem 7.4.15 in \cite{burago2001course}, we can show that every sequence has a limit.
From the non-inflating and non-collapsing estimates,
we can check that every rescaling sequence satisfies the assumption in Theorem 8.1.10 of \cite{burago2001course}.
Then we use this theorem and obtain the assertion for blowup limits.
\end{proof}

\begin{lemm}
\label{limit}
Let $(M^{4}, g(t))_{t \in [0,T)}~(T < \infty)$ be a 4-dimensional closed Ricci flow satisfying $(a'')$ and (b).
Let $(X, d, x_{\infty})$ be a limit of $(M, g(t)).$

\noindent
Then $\mathrm{diam}(X, d) > 0$ and there exist points $y_{1,\infty}, \cdots, y_{N,\infty} \in X$
such that the metric $d$ restricted to $X \setminus \{ y_{1,\infty}, \cdots, y_{N,\infty} \}$
is induced by a smooth Riemannian metric $g_{\infty}.$
Moreover, for all $x \in X,$
\[
|\mathrm{Rm}|(x) \le 64 \max_{i = 1, \cdots, N} d^{-2}(x, y_{i, \infty})
\]
and 
\[
|| \mathrm{Rm}_{g_{\infty}} ||_{L^{2}(X \setminus \{ y_{1,\infty}, \cdots, y_{N,\infty} \})} < +\infty.
\]
If $(X, d, x_{\infty})$ is even a blowup limit, then $\mathrm{Ric}_{g_{\infty}} \equiv 0.$
\end{lemm}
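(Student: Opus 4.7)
The plan is to combine the time-uniform structural control from Lemma \ref{points} with the smoothing estimates packaged in Lemma \ref{pseudo} and Lemma \ref{bamler6.1}. Fix sequences $t_k \to T$, $x_k \in M$, $\lambda_k \ge 1$ realizing the limit, and apply Lemma \ref{points} at each $t_k$ to obtain points $y_{1,t_k},\dots,y_{N,t_k}$, with $N$ independent of $k$, such that $r_{|\mathrm{Rm}|}(x,t_k) > \tfrac{1}{8}\min_i d_{t_k}(x,y_{i,t_k})$ for all $x \in M$. Since both curvature radius and distance scale linearly under the rescaling $\lambda_k^2 g_{t_k}$, the same inequality persists in the rescaled picture. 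After passing to a subsequence, each quantity $\lambda_k d_{t_k}(x_k, y_{i,t_k})$ either stays bounded (in which case $y_{i,t_k}$ converges to a point $y_{i,\infty} \in X$) or diverges (in which case $y_i$ is discarded, as its scaled distance from $x_k$ becomes $+\infty$ in the limit), producing the list $y_{1,\infty},\dots,y_{N',\infty}$ with $N' \le N$.

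For any $x \in X \setminus \{y_{1,\infty},\dots,y_{N',\infty}\}$ set $\rho := \min_i d(x,y_{i,\infty}) > 0$, and pick $x_k \in M$ approximating $x$. The rescaled Lemma \ref{points} inequality gives $r_{|\mathrm{Rm}|}^{\lambda_k^2 g_{t_k}}(x_k) \gtrsim \rho$ uniformly for large $k$. Combined with the non-collapsing estimate (Lemma \ref{non-collapsing}), the backward pseudolocality Lemma \ref{pseudo} upgrades this spatial curvature bound to a uniform curvature bound on a definite backward parabolic neighborhood, and Shi's local higher-derivative estimates then yield uniform $C^m$-bounds on all covariant derivatives of $\mathrm{Rm}$. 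Hamilton--Cheeger--Gromov compactness produces smooth subsequential convergence of $\lambda_k^2 g_{t_k}$ on a neighborhood of $x$ to a smooth Riemannian metric inducing the restriction of $d$; a diagonal argument over a countable exhaustion of $X \setminus \{y_{i,\infty}\}$ globalizes this to $g_\infty$. Passing the Lemma \ref{points} inequality through smooth convergence yields $|\mathrm{Rm}|(x) \le 64 \max_i d^{-2}(x,y_{i,\infty})$ (indices with $d=\infty$ contributing $0$ to the $\max$). The $L^2$-bound comes from the scale-invariance of $\|\mathrm{Rm}\|_{L^2}$ in dimension four, the uniform estimate from Lemma \ref{l-2} on $[T/2,T)$, and Fatou's lemma on a precompact exhaustion of the regular part, while $\mathrm{diam}(X,d) \ge D_1 > 0$ is immediate from the diameter lemma and $\lambda_k \ge 1$.

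For the blowup case $\lambda_k \to \infty$, at any regular $x$ the approximating $x_k$ has original-metric curvature radius $r_{|\mathrm{Rm}|}^{g_{t_k}}(x_k,t_k) \ge r_0 := c\rho/\lambda_k \to 0$; since $t_k \to T > 0$, the hypothesis $r_0^2 < \min\{1,t_k\}$ of Lemma \ref{bamler6.1} holds eventually, Lemma \ref{pseudo} supplies the backward parabolic curvature bound required there (after a harmless shrinkage of the working radius to absorb the $\varepsilon$-factor), and Lemma \ref{bamler6.1} yields $|\mathrm{Ric}|^{g_{t_k}}(x_k,t_k) \le A_0 r_0^{-1-2/p_0}$. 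Rescaling to $\lambda_k^2 g_{t_k}$ multiplies $|\mathrm{Ric}|$ by $\lambda_k^{-2}$, so $|\mathrm{Ric}|^{\lambda_k^2 g_{t_k}}(x_k) \lesssim \lambda_k^{-1+2/p_0}\,\rho^{-1-2/p_0}$; since $p_0 > n/2 = 2$, the exponent of $\lambda_k$ is strictly negative, and hence $\mathrm{Ric}_{g_\infty}(x) = 0$ after passing to the smooth limit. The main technical obstacle I anticipate is the bookkeeping required to pass from the purely spatial curvature bound provided by Lemma \ref{points} to the backward parabolic form demanded by Lemma \ref{bamler6.1}: this amounts to choosing the pseudolocality parameter $\alpha$ in Lemma \ref{pseudo} sufficiently small so that both the spatial output radius $(1-\alpha)r$ and the temporal output $(\varepsilon r)^2$ are comparable to $r_0^2$, and then absorbing the resulting multiplicative constants into the final estimates.
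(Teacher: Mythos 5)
Your proof is correct and follows essentially the same strategy as the paper's: use the points from Lemma \ref{points} (the paper cites Lemma \ref{picking} here, which is evidently a typo for Lemma \ref{points}) together with backward pseudolocality and Shi's estimates to obtain smooth convergence away from the $y_{i,\infty}$, and invoke Lemma \ref{bamler6.1} to kill the Ricci curvature in the blowup case. Your write-up is actually more careful than the paper's in a few places worth noting: you explicitly discard the $y_{i,t_k}$ that escape to infinity in the rescaled pointed picture, you carry out the scaling computation $|\mathrm{Ric}|^{\lambda_k^2 g_{t_k}}(x_k) \lesssim \rho^{-1-2/p_0}\lambda_k^{-1+2/p_0} \to 0$ (which the paper leaves implicit in "a consequence of Lemma \ref{bamler6.1}"), and you supply the justification for the $L^2$-bound via scale invariance of $\|\mathrm{Rm}\|_{L^2}$ in dimension four combined with Lemma \ref{l-2} and Fatou, which the paper only alludes to with "the curvature bounds descend to the limit."
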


\begin{proof}
For any fixed scaling factor $\lambda \ge 1,$ consider the rescaling flow $(\lambda \cdot g(\lambda^{-1} \cdot)).$
Then the non-inflating estimate (Lemma \ref{non-inflating}) also holds for the same constant $\kappa_{1}.$
Moreover, in the proof of Lemma \ref{volume},
\[
\log \frac{\mathrm{Vol} (M, \lambda g_{t})}{\mathrm{Vol} (M, \lambda g_{0})} 
= \log \frac{\mathrm{Vol} (M, g_{t})}{\mathrm{Vol} (M, g_{0})} \ge - C_{2}.
\]
Hence we have the same volume-lower-bound as in Lemma \ref{volume} for the same constant $V_{1} > 0.$
Therefore we obtain the diameter lower bound (from the proof of the diameter lower bound as above).

Consider the points $y_{1, t_{k}}, \cdots, y_{N, t_{k}}$ from Lemma \ref{picking}.
After passing to a subsequence, we may assume that these points converge to points $y_{1,\infty}, \cdots, y_{N,\infty} \in X.$
By the backward pseudolocality(Lemma \ref{pseudo}) and Shi's estimates, we have uniform bounds on the covariant derivatives of the curvature tensor at uniform distance away from $y_{i, t_{k}}.$
Hence $(M, g_{t_{k}})$ smoothly converges to a Riemannian metric $g_{\infty}$ on $X \setminus \{ y_{1,\infty}, \cdots, y_{N,\infty} \}.$
The curvature bounds decend to the limit.
The vanishing of $\mathrm{Ric}_{g_{\infty}}$ in the blowup case is a consequence of Lemma \ref{bamler6.1}.
\end{proof}

\begin{lemm}
\label{tangent}
Let $(M^{4} , g(t))_{t \in [0,T)}$ and $(X, d, x_{\infty})$ be ones as in the previous lemma.
Then the tangent cone at every point of $(X, d)$ is isometric to a finite quotient of Euclidean space $\mathbb{R}^{4}.$
So $(X, d)$ is diffeomorphic to an orbifold with cone singularities.
\end{lemm}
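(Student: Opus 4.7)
The plan is to split into the trivial case of a regular point and the substantive case of a singular point. At any $x\in X\setminus\{y_{1,\infty},\ldots,y_{N,\infty}\}$ Lemma \ref{limit} supplies a smooth Riemannian structure, so the tangent cone is trivially $(\mathbb{R}^{4},g_{\mathrm{eucl}})$ and the local differentiable structure is tautologically that of a smooth $4$-manifold. The real work is at a singular point, which I fix to be $y:=y_{i,\infty}$.

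To analyze an arbitrary tangent cone $(Y,d_{Y},o)=\lim_{k}(X,\mu_{k}^{2}d,y)$ with $\mu_{k}\to\infty$, first I would exploit that $(X,d,x_{\infty})$ is itself the Gromov-Hausdorff limit $\lim_{k}(M,\lambda_{k}^{2}g_{t_{k}},x_{k})$ for some $t_{k}\to T$ and $\lambda_{k}\ge 1$ coming from Lemma \ref{existence}. A standard diagonal argument then selects indices $k_{j}$, times $s_{j}:=t_{k_{j}}\to T$, basepoints $z_{j}\in M$ converging to $y$, and scales $\nu_{j}:=\mu_{j}\lambda_{k_{j}}\to\infty$ such that $(M,\nu_{j}^{2}g_{s_{j}},z_{j})\xrightarrow{\mathrm{GH}}(Y,d_{Y},o)$. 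Thus every tangent cone at $y$ is itself a blowup limit of the original Ricci flow, and I can apply Lemma \ref{limit} directly to $(Y,d_{Y},o)$: $Y$ carries a smooth Ricci-flat metric $g_{\infty}^{Y}$ away from at most $N$ points, the scale-invariant non-inflating and non-collapsing bounds pass to $Y$, and $\|\mathrm{Rm}_{g_{\infty}^{Y}}\|_{L^{2}}<\infty$.

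Next I would show that $(Y,d_{Y})$ is a metric cone with apex $o$. Since $g_{\infty}^{Y}$ is smooth and Ricci-flat on the regular part, the Bishop-Gromov inequality gives that $r\mapsto\mathrm{Vol}(B(o,r))/r^{4}$ is monotone non-increasing; because the singular set is discrete (codimension four) a standard capacity cut-off argument extends the monotonicity across it. A tangent cone is moreover scale-invariant, i.e.\ $(Y,\mu^{2}d_{Y},o)$ is isometric to $(Y,d_{Y},o)$ for every $\mu>0$, so combined with the monotonicity the volume ratio is constant in $r$. The equality case of Bishop-Gromov (Cheeger-Colding rigidity for Ricci-flat manifolds) then identifies $Y$ with a metric cone over a length space $(\Sigma,d_{\Sigma})$ of diameter at most $\pi$. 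Because cone symmetry propagates any singularity along a radial ray while Lemma \ref{limit} permits only finitely many singular points on $Y$, every singularity must coincide with $o$, so $Y\setminus\{o\}$ is smooth.

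Finally I would show this smooth Ricci-flat cone is a Euclidean quotient. Writing $g_{\infty}^{Y}=dr^{2}+r^{2}g_{\Sigma}$ on $Y\setminus\{o\}$, the condition $\mathrm{Ric}_{g_{\infty}^{Y}}=0$ forces $\mathrm{Ric}_{\Sigma}=2g_{\Sigma}$, i.e.\ $\Sigma$ has constant sectional curvature $1$; the non-collapsing lower bound identifies $\Sigma$ with a closed spherical space form $S^{3}/\Gamma$ for some finite $\Gamma\subset O(4)$ acting freely, whence $Y\cong\mathbb{R}^{4}/\Gamma$ isometrically. The smooth convergence near $y$ from Lemma \ref{limit} together with this cone description equips a punctured neighborhood of $y$ in $X$ with a free $\Gamma$-action and glues in a cone singularity at $y$, producing the asserted smooth orbifold structure of cone type. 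The main obstacle is the cone reduction in the third paragraph: rigorously justifying that Bishop-Gromov monotonicity, its equality case, and the Cheeger-Colding rigidity all continue to hold on $Y$ despite the finitely many singularities, for which I would use the isolated nature of the singular set together with the finite $L^{2}$ curvature to extend the relevant integration-by-parts identities across $\{y_{1,\infty},\ldots,y_{N,\infty}\}$.
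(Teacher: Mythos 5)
Your proposal and the paper take genuinely different routes, and yours has a real gap at the decisive step.

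The paper's proof does not re-derive the cone structure from first principles: it cites Simon (Section~8) and Tian (Section~3) for the fact that tangent cones are flat away from the tip and unique, and then devotes the entire remaining argument to one point — showing the \emph{cross-section is connected}, by adapting Bamler--Zhang's Lemma~7.6. The dimension-four content of the lemma is precisely that a Ricci-flat splitting $(\mathbb{R}\times N^{3},\,dt^{2}+g_{N})$ must be flat (because $N^{3}$ Ricci-flat $\Rightarrow$ $N^{3}$ flat), which is what rules out a disconnected cross-section. Your proposal constructs the cone structure explicitly (tangent cone is a blowup limit, Ricci-flat away from isolated points, Bishop--Gromov monotonicity with a capacity cutoff, then $\mathrm{Ric}_{\Sigma}=2g_{\Sigma}$, then $\Sigma=S^{3}/\Gamma$) but \emph{never addresses connectedness of $\Sigma$}. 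This is not a cosmetic omission. If $\Sigma=\Sigma_{1}\sqcup\Sigma_{2}$ with each $\Sigma_{i}$ a spherical space form, everything you argue goes through on each sheet: $Y\setminus\{o\}$ is smooth (your ray-propagation argument applies sheet by sheet), each component is a cone over $S^{3}/\Gamma_{i}$, Bishop--Gromov monotonicity holds on the union, and the non-collapsing/non-inflating bounds merely constrain $\mathrm{Vol}(\Sigma_{1})+\mathrm{Vol}(\Sigma_{2})$ without forcing one component to vanish. The resulting $Y=(\mathbb{R}^{4}/\Gamma_{1})\cup_{o}(\mathbb{R}^{4}/\Gamma_{2})$ is \emph{not} a quotient of $\mathbb{R}^{4}$ by a finite group, so your conclusion fails. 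Ruling this out requires the neck/splitting argument of Bamler--Zhang, which is exactly the part the paper highlights and is absent from your sketch.

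A secondary issue: you assert ``a tangent cone is moreover scale-invariant, i.e.\ $(Y,\mu^{2}d_{Y},o)$ is isometric to $(Y,d_{Y},o)$'' and then feed this into Bishop--Gromov rigidity to conclude $Y$ is a metric cone. That is circular: scale-invariance of a pointed metric space \emph{is} what it means to be a cone, so you are assuming what you want to prove. The correct route is the standard one — Bishop--Gromov at the base point of $X$ (not on $Y$) gives that the volume ratio $\mathrm{Vol}(B_{X}(y,r))/r^{4}$ has a limit as $r\to 0$; this limit is then the constant volume ratio on the tangent cone, and \emph{that} constancy is what triggers the rigidity and gives the metric cone structure. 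This is fixable but should be stated carefully, particularly because you are already having to patch Bishop--Gromov across the singular set.
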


\begin{proof}
The proof is similar to that of {\cite[Lemma~7.6]{bamler2017heat}}.

It follows from the second estimate in the previous lemma,
the non-inflating estimate(Lemma \ref{non-inflating})
and the non-collapsing estimate(Lemma \ref{non-collapsing}) 
that the tangent cones around any $y_{\infty}$
are flat away from the tip and unique (see {\cite[Section~8]{simon2015extending}}, {\cite[Section~3]{tian1990calabi}}).
It follows that the cross-section of each of these tangent cones is connected in the almost same way as in {\cite[Proof~of~Lemma~7.6]{bamler2017heat}} with slight modification.
We only indicate the necessary changes.
To lead to a contradiction for the proof of this fact, 
it is sufficient to show that if $(\mathbb{R} \times N^{3}, dt^{2} + g_{N})$
is a Ricci flat, then the whole space is flat.
But, since $N$ is theree dimensional, $(N, g_{N})$ is flat.
Hence $(\mathbb{R} \times N^{3}, dt^{2} + g_{N})$ is flat.
\end{proof}

We can prove Main Theorem \ref{maintheo}.
\begin{proof}[Proof of Main Theorem~\ref{maintheo}]
The proof is similar to that of {\cite[Corollary~1.11]{bamler2017heat}}.
We define the function $r_{|\mathrm{Rm}|}^{\infty} : M \rightarrow [0,\infty]$ as follows:
\[
r_{|\mathrm{Rm}|}^{\infty} (x) := \limsup_{t \rightarrow T} r_{|\mathrm{Rm}|} (x,t). 
\]
Consider a point $x \in M$ with $r := r_{|\mathrm{Rm}|}^{\infty}(x) > 0.$
So there exists a sequence of times $t_{k} \rightarrow T$
such that $r_{|\mathrm{Rm}|}(x, t_{k}) > r/2.$
Using the backward pseudolocality theorem (Lemma \ref{pseudo}), we find that 
\[
|\mathrm{Rm}| < 4 K r^{-2}~~\mathrm{on}~~\bigcup^{\infty}_{k = 1} P(x, t_{k}, \frac{1}{2} \varepsilon r, -(\frac{1}{2} \varepsilon r)^{2}).
\]
By the distance distortion estimate (Lemma \ref{distance2}), there exists a small $\tau > 0,$ and universal constant $C > 0$ such that
\[
P(x, T-\tau, \frac{1}{2C} \varepsilon r, \tau) 
\subset \bigcup^{\infty}_{k=1} P(x, t_{k}, \frac{1}{2}\varepsilon r, -(\frac{1}{2}\varepsilon r)^{2}),
\]
where $P(x, T-\tau, \frac{1}{2C} \varepsilon r, \tau)$ denotes the forward parabolic ball centered at $(x, T - \tau)$
of radius $\frac{1}{2C} \varepsilon r$ defined as
\[
P \left(x, T-\tau, \frac{1}{2C} \varepsilon r, \tau \right) := B \left(x, T - \tau, \frac{1}{2C} \varepsilon r \right) \times \left([T - \tau, T] \cap [0,T) \right).
\]
Hence as $t \rightarrow T,$ the metric $g(t)$ converges smoothly to a Riemannian metric $g_{T}$ in a neighborhood
of $x.$
Moreover, from this inclusion,
it follows that $r^{\infty}_{|\mathrm{Rm}|}$ is positive in this small neighborhood of $x.$
Hence we obtain that $M^{\mathrm{reg}} := \{ r^{\infty}_{|\mathrm{Rm}|} > 0 \} \subset M$ is an open subset and 
that $g(t)$ smoothly converges to a Riemannian metric $g_{T}$ on $M^{\mathrm{reg}}$ as $t \rightarrow T.$
Moreover, $r^{\infty}_{|\mathrm{Rm}|}$ restricted to $M^{\mathrm{reg}}$ is 1-Lipschitz with respect to $g_{T}$
(being the limsup of a family of 1-Lipschitz functions (see the above Remark \ref{curvrad})).
So by this 1-Lipschitz property and continuity of $g(t)$ (on $M^{\mathrm{reg}}$),
for any $x \in M^{\mathrm{reg}}$ and $\delta > 0$ the ball $B(x, t, r_{|\mathrm{Rm}|}(x) - \delta)$
is contained in $M^{\mathrm{reg}}$ for $t$ sufficiently close to $T.$
We also have $|\mathrm{Rm}|(\cdot, T) \le (r_{|\mathrm{Rm}|}^{\infty}(x))^{-2}$ on $B(x, t, r_{|\mathrm{Rm}|}(x) - \delta)$
for each $\delta > 0.$ Hnece we obtain
\[
r^{\infty}_{|\mathrm{Rm}|}(x) = \lim_{t \rightarrow T} r_{|\mathrm{Rm}|}(x,t)~~~~\mathrm{for~all}~x \in M.
\] 
Next, we will show that $M^{\mathrm{sing}} := M \setminus M^{\mathrm{reg}} = \{ r^{\infty}_{|\mathrm{Rm}|} = 0 \}$
is a zero set with respect to the Riemannian volume measure $dvol_{g(t)}$ for any $t \in [0,T).$
Since $\lim_{t \rightarrow T} r_{|\mathrm{Rm}|} (\cdot, t) = 0$ on $M^{\mathrm{sing}},$
we have with Lemma \ref{l-2} that for any $0 < s < 1,$
\[
\begin{split}
\mathrm{Vol}_{t} (M^{\mathrm{sing}})
&\le \mathrm{Vol}_{t} (\{ r^{\infty}_{|\mathrm{Rm}|} < s \}) \\
&= \mathrm{Vol}_{t} (\{ \limsup_{\tilde{t} \rightarrow T} r_{|\mathrm{Rm}|}(\cdot, \tilde{t}) < s \}) \\
&= \limsup_{\tilde{t} \rightarrow T} \mathrm{Vol}_{t} (\{ r_{|\mathrm{Rm}|}(\cdot, \tilde{t}) < s \}) \\
&\le e^{C_{2}} \limsup_{\tilde{t} \rightarrow T} \mathrm{Vol}_{\tilde{t}} (\{ r_{|\mathrm{Rm}|}(\cdot, \tilde{t}) < s \}) \\
&\le e^{C_{2}} C s^{4},
\end{split}
\]
for some positive constant $C = C(M, g(0), C_{1}, C_{2}, n, T).$
Letting $s \rightarrow 0$ yields $\mathrm{Vol}_{t} (M^{\mathrm{sing}}) = 0.$

Consider the points $y_{1,t}, \cdots, y_{N,t}$ from Lemma \ref{points}.
We claim that for any $0 < s < 1$ and any $t < T$ sufficiently close to $T,$ we have 
\[
M^{\mathrm{sing}} \subset \bigcup^{N}_{i = 1} B(y_{i, t}, t, s).
\]
Fix $s$ and choose $\tilde{s} > 0$ such that $\tilde{s} < \frac{1}{40} s$ and $\mathrm{Vol}_{t} (\{ r^{\infty}_{|\mathrm{Rm}|} < \tilde{s} \}) < \kappa_{2} (\frac{1}{2} s)^{4}$ for any $t \in [0,T)$
(this is possible because $M^{\mathrm{sing}}$ is zero set with respect to $dvol_{g(t)}$ and volume distortion).
Here, $\kappa_{2}$ denotes the positive constant in the non-collapsing estimate (Lemma \ref{non-collapsing}).
Define $U := \{ r^{\infty}_{|\mathrm{Rm}|} \ge \tilde{s} \} \subset M$
and choose $\tau > 0$ small enough such that $r_{|\mathrm{Rm}|} > \frac{1}{2} \tilde{s}$
on $U \times [T - \tau, T)$ and $r_{|\mathrm{Rm}|} < 2 \tilde{s}$ on $\partial U \times [T - \tau, T).$
Let $t \in [T - \tau, T)$ and $x \in M^{\mathrm{sing}} \subset M \setminus U(= \{ r^{\infty}_{|\mathrm{Rm}|} < \tilde{s} \}).$
We claim that there exists a point $z \in \partial U$ such that $d_{t}(z, x) < \frac{1}{2} s.$
Otherwise $B(x,t, \frac{1}{2} s) \cap U = \emptyset,$ which would imply that
\[
\kappa_{2} (\frac{1}{2} s)^{4} < \kappa_{2} s^{4} \le \mathrm{Vol}_{t} (B(x,t,s))
\le \mathrm{Vol}_{t} (M \setminus U)
< \kappa_{2}(\frac{1}{2} s)^{4}
\] 
which is a contradiction.
Since $r_{|\mathrm{Rm}|}(z,t) < 2 \tilde{s} < \frac{1}{20} s,$
there exists an $i \in \{ 1, \cdots, N \}$
such that $d_{t}(z, y_{i,t}) < 8 \cdot \frac{1}{20} s < \frac{1}{2} s$
by Lemma \ref{points}.
It follows that $x \in B(y_{i,t}, t, s).$
This proves the above inclusion.
This inclusion implies that $(M, g(t))$ converges to a metric space $(X, d)$ in the Gromov-Hausdorff sense
which is isometric to $(M^{\mathrm{reg}}, g_{T})$ away from at most $N$ points.
By the above Lemma \ref{tangent},
the tangent cone of $(X, d)$ are finite quotients of $\mathbb{R}^{4}.$
It follows that $M^{\mathrm{reg}}$ is connected and that $(X, d)$ isometric to the metric completion
$(\Bar{M}^{\mathrm{reg}}, d_{g_{T}})$ of $(M^{\mathrm{reg}}, g_{T}).$
Therefore $\Bar{M}^{\mathrm{reg}}$ can be endowed with a smooth orbifold structure such that $g_{T}$
is a smooth metric away from the singular points. This shows assertions (1)-(4).
The assertion (5) is also proved in the same way as in {\cite[proof~of~Corollary~1.11(e)]{bamler2017heat}}
after slight modifications. So we shall only indicate the necessary changes.
Under our assumptions, (7.19) and (7.20) in {\cite[proof~of~Corollary~1.11(e)]{bamler2017heat}} hold
with 
\[
\delta_{i} := C (2^{-i})^{2} || \mathrm{Rm} ||_{L^{\infty} (A_{i-1} \cup A_{i} \cup A_{i+1} ; g_{T})} + C (2^{-i})^{1 - \frac{n}{2p_{0}}}
\]
(where $n = 4$ in this case).
Moreover, as in the same way in {\cite[proof~of~Corollary~1.11(e)]{bamler2017heat}}, we have
\[
\delta_{i}^{2} \le C || \mathrm{Rm} ||^{2}_{L^{2}(A_{i-2} \cup \cdots \cup A_{i+2} ; g_{T})} + C (2^{-i})^{ 2 - \frac{n}{2p_{0}}}
\] 
(where $n = 4$ in this case). Hence $\sum^{\infty}_{i = 1} \delta^{2}_{i} < \infty.$
Therefore we can show (5) by the same arguments in {\cite[proof~of~Corollary~1.11(e)]{bamler2017heat}}
with $\delta_{i}$ being replaced with the above one.
\end{proof}

\section{Under Riemannian curvature integral bounds}
Next, we will consider closed Ricci flow satisfying the conditions $(A), (a'')$ and $(b)$ stated in section 1.
And prove the second main result ($=$ Main Theorem \ref{maintheo2}).
\begin{proof}[Proof of Main Theorem \ref{maintheo2}]
Firstly, we have a volume upper bound from the proof of Lemma \ref{volume}.
And, from (b), we can obtain a volume lower bound.
Since (A) implies (a), we can obtain diameter bounds and non-inflating estimate in the same way.
By the volume upper bound and H{\"o}lder's inequality, $(a'')$ implies $(a').$
Hence the non-collapsing estimate holds by Lemma \ref{non-collapsing}.
Moreover, 
if we assume the condition $(a''),$ we can also obtain exactly the same assertion in Lemma \ref{bamler6.1}.
We can also obtain the lemma corresponding to Lemma \ref{picking} but
we have to replace (3) in Lemma \ref{picking} with the following $L^{n/2}$-condition
\[
\int_{B(y,t,\frac{1}{8} r_{|\mathrm{Rm}|}(y,t))} |\mathrm{Rm}|^{n/2} dvol_{t} > \delta.
\]
We already have $L^{n/2}$-estimate from the condition (A),
but the proof of Lemma \ref{l-2} is still useful.
From the proof of Lemma \ref{l-2}, we can obtain the lemma corresponding to Lemma \ref{points}.
From non-inflating, non-collapsing estimates and the above corresponding lemmas,
we obtain the lemmas corresponding to Lemma \ref{existence} and \ref{limit}.
But we have to replace the second estimate in Lemma \ref{limit} with the $L^{n/2}$-estimate
\[
|| \mathrm{Rm}_{g_{\infty}} ||_{L^{n/2}(X \setminus \{ y_{1,\infty}, \cdots, y_{N,\infty} \})} \le C < +\infty.
\]
From this lemma corresponding to Lemma \ref{limit}, non-inflating estimate and non-collapsing estimate,
we obtain that the tangent cone at every point of the limit is isometric to a finite quotient of $\mathbb{R}^{n}$
and the limit is diffeomorphic to an orbifold with cone singularities (in the same way in {\cite[Section~3]{tian1990calabi}}).
To lead to a contradiction for the proof of the claim corresponding to Lemma \ref{tangent}, it is sufficient to show that if $(\mathbb{R} \times N^{n-1}, dt^{2} + g_{N})$
is a Ricci flat $n$-dimensional($n \ge 4$) Riemannian manifold, then
the space is flat.
Moreover, by Corollary \ref{ale} below, we also assume that $(\mathbb{R} \times N, dt^{2} + g_{N})$ is an ALE space (see \cite{bando1989construction} for definition of ALE).
If $n$ is odd, then the space is isometric to $(\mathbb{R}^{n}, g_{\mathrm{eucl.}})$ from {\cite[Theorem~(1.5)]{bando1989construction}}(see {\cite[Remarks.~2)]{bando1989construction}}).
If $n$ is even, then it follows that  $(N, g_{N})$ is isometric to $(\mathbb{R}^{n-1}, g_{\mathrm{euci.}})$ as in the case that $n$ is odd.
Hence the space $(\mathbb{R} \times N^{n-1}, dt^{2} + g_{N})$ is flat in the both cases.
Therefore we can obtain Main Theorem \ref{maintheo2}
from the above corresponding lemmas and backward pseudolocality(Lemma \ref{pseudo})
in the same way as in the proof of Main Theorem \ref{maintheo}.
\end{proof}

Since the $L^{2}$-bound of $\mathrm{Rm}$ is scaling invariant and descends to geometric limits,
we immediately obtain the following corollary from {\cite[Theorem~1.5]{bando1989construction}}:
\begin{coro}[cf. {\cite[Corollary~1.9]{bamler2017heat}}, \cite{bando1989construction}]
\label{ale}
Let $(M^{n}, g(t))_{t \in [0,T)}~(T < +\infty)$ be a Ricci flow on a closed $n$-manifold.
Suppose that $n = 4$ and $(M^{4}, g(t))_{t \in [0,T)}$ satisfies $(a'')$, (b),
or
$n \ge 4$ and $(M^{n}, g(t))_{t \in [0,T)}$ satisfies $(A'), (a'')$ and $(b).$
Let $(x_{k}, t_{k}) \in M \times [0,T)$ be a sequence with $Q_{k} = |\mathrm{Rm}|(x_{k}, t_{k}) \rightarrow \infty$
and suppose that the pointed sequence of rescaling flows $(M, Q_{k} g_{Q^{-1}_{k} t + t_{k}}, x_{k})$
converges to some ancient Ricci flow $(M_{\infty}, (g_{\infty,t})_{t \in (-\infty, 0]}, x_{k})$
in the smooth Cheeger-Gromov-Hamilton sense.
Then $g_{\infty, t} = g_{\infty}$ is constant in time and $(M_{\infty}, g_{\infty})$
is Ricci-flat and asymptotically locally Euclidean. 
\end{coro}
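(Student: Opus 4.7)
The plan is to establish three facts about the limit in order: (i) $\mathrm{Ric}_{g_{\infty,t}}\equiv 0$ for every $t\in(-\infty,0]$, (ii) consequently $g_{\infty,t}=g_\infty$ is static, and (iii) $(M_\infty,g_\infty)$ is ALE. The first is the only non-formal step; the other two are routine consequences.

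First I would exploit the sub-quadratic Ricci bound of Lemma \ref{bamler6.1}. At any point $(x,t)$ of the original flow, together with the curvature radius $r_{|\mathrm{Rm}|}(x,t)=:r$, that lemma yields
\[
|\mathrm{Ric}|(x,t)\le A_0\,r^{-1-\frac{n}{2p_0}},\qquad |\nabla^m\mathrm{Ric}|(x,t)\le A_m\,r^{-1-m-\frac{n}{2p_0}},
\]
for every $m\ge 0$, provided $r^2\le\min\{1,t\}$ (which will hold after time-shifting, since $t_k\to T$). Pick an arbitrary point $(y,s)$ in the rescaled limit flow; by smooth Cheeger--Gromov--Hamilton convergence there are $(y_k,s_k)$ in the $k$-th rescaled flow converging to $(y,s)$ with uniformly bounded curvature, which lifts back to points $(\tilde y_k,\tilde t_k)$ in the original flow where $|\mathrm{Rm}|\asymp Q_k$. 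Thus $r_{|\mathrm{Rm}|}(\tilde y_k,\tilde t_k)\asymp Q_k^{-1/2}$, and plugging into Lemma \ref{bamler6.1} gives
\[
|\mathrm{Ric}|(\tilde y_k,\tilde t_k)\le C\,Q_k^{\frac12(1+\frac{n}{2p_0})}.
\]
Under the parabolic rescaling $\tilde g_k=Q_k\,g(Q_k^{-1}\cdot+t_k)$, Ricci scales by $Q_k^{-1}$, hence
\[
|\mathrm{Ric}_{\tilde g_k}|(y_k,s_k)\le C\,Q_k^{-\frac12(1-\frac{n}{2p_0})}\longrightarrow 0
\]
because $p_0>n/2$ forces $1-\frac{n}{2p_0}>0$. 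The same argument applied to $\nabla^m\mathrm{Ric}$ shows all derivatives vanish in the limit, so $\mathrm{Ric}_{g_{\infty,t}}\equiv 0$ everywhere on $M_\infty\times(-\infty,0]$.

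Next, the Ricci flow equation $\partial_t g_{\infty,t}=-2\,\mathrm{Ric}_{g_{\infty,t}}=0$ immediately gives $g_{\infty,t}\equiv g_\infty$. It remains to upgrade Ricci-flatness to ALE via \cite[Theorem~1.5]{bando1989construction}, for which I need (a) a global $L^{n/2}$ curvature bound on $(M_\infty,g_\infty)$ and (b) Euclidean volume growth of $(M_\infty,g_\infty)$. For (a): in the case $n=4$, Lemma \ref{l-2} supplies a uniform $L^2$ bound on $|\mathrm{Rm}|$ along $g(t)$, which is scale-invariant and descends to the smooth limit by Fatou. In the case $n\ge 5$, the assumption in Main Theorem \ref{maintheo2} (the scale-invariant quantity $\|\mathrm{Rm}\|_{L^{n/2}(M,g(t))}$ is uniformly bounded) gives the same conclusion directly. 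For (b): the non-inflating estimate (Lemma \ref{non-inflating}) and the non-collapsing estimate (Lemma \ref{non-collapsing}) are both scale-invariant, so the limit $(M_\infty,g_\infty)$ has both upper and lower Euclidean volume bounds on balls of every scale, which in particular implies Euclidean volume growth. Feeding these into \cite[Theorem~1.5]{bando1989construction} concludes that $(M_\infty,g_\infty)$ is ALE.

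The main obstacle is step (i): the point-wise Shi-type estimate on $|\mathrm{Ric}|$ in terms of $|\mathrm{Rm}|$ alone gives only the trivial bound $|\mathrm{Ric}|\lesssim r^{-2}$, which is preserved (not improved) by parabolic rescaling and therefore cannot force the Ricci to vanish. The whole argument depends on having the strictly better exponent $1+\frac{n}{2p_0}<2$ in Lemma \ref{bamler6.1}, which itself is where the local integral smallness hypothesis $(a'')$ is used; once this is in place the remaining steps (static limit, $L^{n/2}$ bound, ALE structure) are essentially automatic.
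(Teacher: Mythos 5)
Your proposal is correct and follows essentially the same route the paper takes: the Ricci-flatness of blowup limits is already asserted in the paper's Lemma~\ref{limit} as a consequence of the improved $|\mathrm{Ric}|\lesssim r_{|\mathrm{Rm}|}^{-1-n/(2p_0)}$ bound of Lemma~\ref{bamler6.1} (the same rescaling computation you carry out), and the ALE conclusion is drawn from \cite[Theorem~1.5]{bando1989construction} using scale-invariance of the $L^{n/2}$ curvature norm, which is all the paper says before the statement. Your extra verifications (Euclidean volume growth from the scale-invariant non-collapsing and non-inflating estimates, and passing the $L^{n/2}$ bound to the limit by Fatou) are the implicit details behind the paper's one-line justification; the only small imprecision is that you only have $r_{|\mathrm{Rm}|}(\tilde y_k,\tilde t_k)\gtrsim Q_k^{-1/2}$ rather than $\asymp$, but since the Ricci bound improves as the scale grows, this does not affect the conclusion.
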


A direct consequence of the previous corollary is the following (cf. {\cite[Proposition~5.6]{anderson2008survey}}, {\cite[Corollary~1.10]{bamler2017heat}} ):
\begin{coro}[cf. {\cite[Corollary~1.10]{bamler2017heat}}]
Let $(M^{4}, g(t))_{t \in [0,T)}~(T < +\infty)$ be a Ricci flow on a closed 4-manifold $M$
satisfying the following topological condition: 
the second homology group over every field vanishes, i.e., $H_{2}(M ; \mathbb{F}) = 0$
for every field $\mathbb{F}.$
Then at least one of conditions $(a'')$, (b) fails.
\end{coro}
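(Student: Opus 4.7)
I would argue by contradiction. Suppose both $(a'')$ and $(b)$ hold. Since $T$ is assumed finite and the hypotheses of Main Theorem \ref{maintheo} are in force, the flow converges in the smooth Cheeger--Gromov sense to an orbifold $(\bar{M}^{\mathrm{reg}}, g_T)$ with finitely many cone singularities. Because $T$ is the maximal existence time, $\sup_M |\mathrm{Rm}|_{g(t)} \to \infty$ as $t \to T$, so I can select a sequence $(x_k, t_k) \in M \times [0,T)$ with $Q_k := |\mathrm{Rm}|(x_k, t_k) \to \infty$ and $t_k \to T$.

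The next step is to form the parabolic rescaled flows $(M, Q_k\, g(t_k + Q_k^{-1} t), x_k)$. The non-inflating estimate (Lemma \ref{non-inflating}), the non-collapsing estimate (Lemma \ref{non-collapsing}) and Lemma \ref{bamler6.1} are all scaling invariant or at worst applicable on the rescaled scale for $k$ large; together with the backward pseudolocality (Lemma \ref{pseudo}) and Shi's estimates, they yield uniform higher-derivative bounds on the curvature in a fixed $g$-parabolic ball around $x_k$ after rescaling. After passing to a subsequence, I invoke Corollary \ref{ale} to extract a pointed smooth Cheeger--Gromov--Hamilton limit $(M_\infty, g_\infty, x_\infty)$ that is Ricci--flat and asymptotically locally Euclidean. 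The normalization $|\mathrm{Rm}|_{g_\infty}(x_\infty) = 1$ forces $(M_\infty, g_\infty)$ to be non-flat.

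The main topological input I would use now is the classical fact that a non-flat Ricci--flat ALE $4$-manifold must have $b_2 > 0$; this goes back to Anderson and is a standard consequence of the Chern--Gauss--Bonnet formula together with the vanishing of the Ricci curvature (which forces $\int |\mathrm{Rm}|^2 > 0$ and then, combined with the ALE structure at infinity, produces a non-trivial second homology class). Thus $H_2(M_\infty; \mathbb{F}) \neq 0$ for some field $\mathbb{F}$. I expect this to be the cleanest way to transfer information from the singularity model to the underlying manifold.

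The final, and to my mind most delicate, step is to propagate this non-trivial second homology from the bubble $M_\infty$ to the ambient manifold $M$. The pointed smooth Cheeger--Gromov--Hamilton convergence provides, for every compact $K \subset M_\infty$ containing $x_\infty$, diffeomorphic embeddings $\varphi_k : K \hookrightarrow M$ for all large $k$. Taking $K$ to be a large compact domain in $M_\infty$ that carries a non-trivial class in $H_2(M_\infty; \mathbb{F})$ (possible because $b_2(M_\infty) > 0$ and because ALE manifolds are exhausted by such compact cores), the embedding $\varphi_k$ produces a $2$-cycle in $M$. Following the bubbling/neck decomposition argument reviewed in \cite{anderson2008survey} (Proposition~5.6) one sees that this cycle remains non-trivial in $H_2(M; \mathbb{F})$: the ALE end is glued back into $M$ through a neck region along which the second homology injects, so the $b_2$ of any bubble is a lower bound for $b_2(M)$. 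This contradicts the standing assumption $H_2(M; \mathbb{F}) = 0$ for every field $\mathbb{F}$ and completes the proof. The hard part is precisely this last gluing/injectivity step; the rest is a bookkeeping assembly of the earlier results in the paper.
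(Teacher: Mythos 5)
Your strategy --- blow up at a singularity, identify the bubble via Corollary~\ref{ale} as a non-flat Ricci-flat ALE $4$-manifold, invoke $b_{2}>0$ for such manifolds, and propagate this back to $M$ --- is the intended one (it is the content of Anderson's Proposition~5.6 and Bamler--Zhang's Corollary~1.10, which the paper simply cites). The propagation step you single out as the hard one is in fact routine: the compact core $K\subset M_\infty$ has boundary a spherical space form $S^{3}/\Gamma$, whose rational homology vanishes in degrees $1$ and $2$, so Mayer--Vietoris applied to $M=\varphi_{k}(K)\cup\bigl(M\setminus\varphi_{k}(\mathrm{int}\,K)\bigr)$ gives an injection $H_{2}(\varphi_{k}(K);\mathbb{Q})\hookrightarrow H_{2}(M;\mathbb{Q})$ directly, without any ``injectivity along the neck'' heuristic.

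The genuine gap is in obtaining the smooth Cheeger--Gromov--Hamilton limit. Corollary~\ref{ale} \emph{assumes} that the rescaled flows converge; it does not produce the limit. Picking an arbitrary sequence with $Q_{k}=|\mathrm{Rm}|(x_{k},t_{k})\to\infty$ gives no control over the curvature near $x_{k}$ after rescaling, and backward pseudolocality (Lemma~\ref{pseudo}) cannot be invoked as you suggest: its hypothesis requires $|\mathrm{Rm}|\le(\alpha r)^{-2}$ on an entire ball $B(x_{0},t_{0},r)$, not merely at a single point $(x_{k},t_{k})$. You need a point-picking step first --- for instance take $(x_{k},t_{k})$ attaining $\max_{M}|\mathrm{Rm}|(\cdot,t_{k})$, or use Perelman-style point-picking to arrange $|\mathrm{Rm}|\le 4Q_{k}$ on a controlled parabolic neighbourhood in the rescaled metric. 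Only after that do the non-collapsing estimate (Lemma~\ref{non-collapsing}), Shi's estimates, and Hamilton's compactness theorem produce the smooth limit required by Corollary~\ref{ale}. A smaller but real point: the Corollary as stated (and your proof) implicitly requires $T$ to be the \emph{maximal} existence time, otherwise a round $S^{4}$ over a short time interval satisfies both $(a'')$ and $(b)$ and contradicts the statement.
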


\section{Extend the flow}
\label{extend}
Next, we will state that the Ricci flow as above can be extended over $T$ as an orbifold Ricci flow.
Let $(X, d)$ be a limit of a closed Ricci flow $(M^{n}, g(t))_{t \in [0,T)}$ satisfying the assumptions
of Main Theorem \ref{maintheo} or \ref{maintheo2}.
Then, as we seen above, $(X, d)$ is a $C^{0}$ Riemannian orbifold.
This can be extended as an orbifold Ricci flow in the following sense :
\begin{theo}[Extendable as an orbifold Ricci flow~({\cite[Theorem~9.1]{simon2015extending}})]
There exists a smooth solution to the orbifold Ricci flow $(X, h(t))_{t \in (0,S)}$
for some $S > 0$ such that $(X, d (h(t))) \rightarrow (X, d)$
in the Gromov-Hausdorff sense as $t \rightarrow 0.$
\end{theo}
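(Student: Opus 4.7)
The plan is to apply the machinery of Simon for starting Ricci flow from $C^0$ Riemannian orbifold initial data, using crucially the approximation statement (5) in Main Theorems \ref{maintheo} and \ref{maintheo2}. The inputs already established are: $(X,d) = (\Bar{M}^{\mathrm{reg}}, \bar{d})$ is a compact length space whose singular set consists of finitely many cone-type orbifold points with flat tangent cones $\mathbb{R}^n/\Gamma_k$; on the regular part the limit metric $g_T$ is smooth and satisfies $|\mathrm{Rm}| < o(\rho^{-2})$ near the singularities; and there are smooth orbifold metrics $\bar{g}_\varepsilon$ on $\Bar{M}^{\mathrm{reg}}$ with $\|g_T - \bar{g}_\varepsilon\|_{C^0(\bar{g}_\varepsilon)} + \|g_T - \bar{g}_\varepsilon\|_{W^{2,2}(\bar{g}_\varepsilon)} < \varepsilon$.

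First I would start a smooth orbifold Ricci flow $h_\varepsilon(t)$ from each initial metric $\bar{g}_\varepsilon$ by applying the orbifold version of Hamilton--DeTurck short-time existence in local uniformizing charts around each cone point. The critical second step is a uniform-in-$\varepsilon$ lower bound on existence time, namely that $h_\varepsilon(t)$ is defined on $[0, S]$ for some $S > 0$ independent of $\varepsilon$, together with a curvature decay estimate $\sup_X |\mathrm{Rm}(h_\varepsilon(t))| \le C/t$. To obtain this I would combine: the non-collapsing and non-inflating bounds of Lemmas \ref{non-inflating} and \ref{non-collapsing}, which descend to $\bar{g}_\varepsilon$ by $C^0$-closeness; the $W^{2,2}$-smallness, which controls $L^{n/2}$-integrals of $|\mathrm{Rm}(\bar{g}_\varepsilon)|$ on small balls concentrically around the cone points; and an orbifold version of Perelman pseudolocality, which is available precisely because the tangent cones are flat quotients $\mathbb{R}^n/\Gamma_k$ and therefore admit the requisite asymptotic Euclidean isoperimetric inequality after lifting to local covers.

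Given uniform curvature bounds on $(0,S]$, Shi's derivative estimates provide uniform $C^k$ control, and Hamilton's compactness theorem in the orbifold category (applied centered at arbitrary points with a diagonal subsequence) extracts a smooth limit orbifold Ricci flow $(X, h(t))_{t \in (0, S)}$. To verify $(X, d(h(t))) \to (X,d)$ in Gromov--Hausdorff sense as $t \to 0$, I would exploit the distance distortion bound arising from $|\mathrm{Ric}(h_\varepsilon(t))| \le C/t$, namely $|d_{h_\varepsilon(t)} - d_{\bar{g}_\varepsilon}| \le c\sqrt{t}$, and then pass to the double limit using the $C^0$-convergence $\bar{g}_\varepsilon \to g_T$ already given by assertion (5).

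The main obstacle is the uniform-in-$\varepsilon$ pseudolocality near the orbifold singularities: although $\bar{g}_\varepsilon$ is smooth, its curvature tensor may concentrate arbitrary norm into any neighborhood of a cone point as $\varepsilon \to 0$, so classical short-time existence alone gives $S_\varepsilon \to 0$. Forcing a uniform $S$ requires the orbifold adaptation of Perelman's pseudolocality together with the flatness of the tangent cones. This is the technical heart of \cite[\S 9]{simon2015extending}, and in practice one invokes Simon's theorem directly rather than rebuilding the argument.
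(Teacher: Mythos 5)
Your proposal follows essentially the same route as the paper: flow each approximating orbifold metric $\bar{g}_{i^{-1}}$ from assertion (5), obtain a uniform existence time $S$ independent of $i$ with Shi-type derivative estimates, invoke the orbifold compactness theorem to pass to a limit flow $(X,h(t))_{t\in(0,S)}$, and confirm Gromov--Hausdorff convergence as $t\to 0$ via the $\sqrt{t}$ distance distortion coming from $|\mathrm{Ric}|\le C/t$. The paper, like you, defers the technical heart (the uniform-in-$\varepsilon$ short-time existence and curvature decay near the cone points) to Simon's Section~9, so your outline is a faithful and somewhat more detailed account of the same argument.
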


\begin{proof}
We can prove this claim by using the argument in {\cite[Section~9]{simon2015extending}}.
Firstly, we can flow each $(X_{i}, \bar{g}_{i^{-1}})$ by the orbifold Ricci flow $(X, Z_{i}(t)_{t \in (0,S)})$
(where $S$ is independent of $i$)
satisfying higher order derivative (Shi type) estimates.
Here, $\bar{g}_{i^{-1}}$ denotes the smooth orbifold metric constructed in (5) in Main Theorem
\ref{maintheo} or \ref{maintheo2} with $\varepsilon = i^{-1}.$
Then, using this derivative estimates and the compactness theorem for orbifold Ricci flows,
as $i \rightarrow \infty,$ we obtain a smooth orbifold Ricci flow $(X, h(t)_{t \in (0,S)})$
such that
\[
d_{\mathrm{GH}} ((X,d), (X, d_{h(t)})) \rightarrow 0
\]
as $t \rightarrow 0.$
This $(X, h(t)_{t \in (0,S)})$ is desired orbifold Ricci flow.
See {\cite[Section~9]{simon2015extending}} for more detail.
\end{proof}

\begin{rema}
Under the assumptions $(a'')$ and $(c),$ if we assume additionally that the volume or the diameter of $(M, g(t))_{t \in [0,T)}$ are bounded from below away from zero,
then the same result as in Main Theorem \ref{maintheo} holds.
The proof is as follows. :
The non-inflating estimate comes from (c) by Bishop-Gromov volume comparison.
Moreover, from (c), we have
\[
(*)~~B(x, r, s) \subset B(x, e^{\frac{C_{3}(t - s)}{2}}, t),~~s \le t,~r > 0,~x \in M.
\]
Hence, we use $(*)$ instead of distance distortion in the proof of Main Theorem \ref{maintheo}
and obtain the claim in the same way as in the proof of Main Theorem \ref{maintheo}.
\end{rema}

As we stated in the introduction,
in dimension 2 and 3,
the conditions (a) and (b) are sufficient to extend the flow smoothly over $T.$
From Main Theorem \ref{maintheo}, in dimension 4, we hope that the conditions (a) and (b) are sufficient to extend the flow over $T$
in the sense of Theorem \ref{extend}.

\section{Appendix}
To be self-contained, we will describe the Vitali's covering lemma used in the proof of Lemma \ref{l-2} :
\begin{lemm}[Vitali's covering lemma]
\label{vitali}
Let $\mathcal{F}$ be a collection of (nondegenerate) balls in a metric space, with bounded radii.
Then there exists a disjoint subcollection $\mathcal{G}$ of $\mathcal{F}$ with the following property:
every ball $B$ in $\mathcal{F}$ intersects a ball $C$ in $\mathcal{G}$ such that $B \subset 4 C,$
where $4 C$ denotes the ball with the same center of $C$ and four times the radius of $C.$
\end{lemm}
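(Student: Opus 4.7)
The plan is to use a classical greedy selection argument, ordered by radius. Let $R := \sup_{B \in \mathcal{F}} r(B) < \infty$, where $r(B)$ denotes the radius of $B$. Fix a ratio $\lambda > 1$ (to be chosen at the end of the argument) and partition $\mathcal{F}$ into radius-classes
\[
\mathcal{F}_n := \{ B \in \mathcal{F} \mid R\lambda^{-n-1} < r(B) \le R\lambda^{-n} \}, \quad n = 0, 1, 2, \ldots,
\]
so that all ``large'' balls live in $\mathcal{F}_0$ and the radii decrease geometrically with $n$.

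Next, I would inductively on $n$ choose $\mathcal{G}_n$ to be a maximal pairwise-disjoint subfamily of $\{ B \in \mathcal{F}_n \mid B \cap C = \emptyset \text{ for every } C \in \mathcal{G}_0 \cup \cdots \cup \mathcal{G}_{n-1} \}$. The existence of such a maximal subfamily follows from Zorn's lemma applied to the poset of pairwise-disjoint subfamilies ordered by inclusion, since chains admit upper bounds by taking unions. Setting $\mathcal{G} := \bigcup_{n \ge 0} \mathcal{G}_n$ then yields a pairwise-disjoint subcollection of $\mathcal{F}$ by construction.

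For the covering property, fix $B \in \mathcal{F}$ and pick the $n$ with $B \in \mathcal{F}_n$. If $B \in \mathcal{G}_n$, simply take $C = B$, so that $B \subset 4C$ trivially. Otherwise, by maximality of $\mathcal{G}_n$, the ball $B$ must intersect some $C \in \bigcup_{k \le n} \mathcal{G}_k$; for if it did not, then $\mathcal{G}_n \cup \{ B \}$ would still be a pairwise-disjoint subfamily disjoint from $\bigcup_{k<n}\mathcal{G}_k$, contradicting the maximal choice of $\mathcal{G}_n$. For such a $C \in \mathcal{G}_k$ with $k \le n$, the radius bounds give $r(C) > R\lambda^{-k-1} \ge R\lambda^{-n-1} \ge r(B)/\lambda$. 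Writing $p, q$ for the centers of $B$ and $C$ and taking any $z \in B \cap C$, the triangle inequality gives $d(p,q) \le r(B) + r(C)$, so for every $y \in B$,
\[
d(y, q) \le d(y, p) + d(p, q) \le 2 r(B) + r(C) < (2\lambda + 1) r(C).
\]
Choosing $\lambda = 3/2$ produces $d(y, q) < 4 r(C)$, that is, $B \subset 4C$, as required.

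The only subtle point is the existence of the maximal disjoint subfamilies inside each $\mathcal{F}_n$: because no separability or countability hypothesis on the ambient metric space is assumed, a naive countable greedy algorithm does not suffice, and one must invoke Zorn's lemma (or equivalently a transfinite recursion). Once this set-theoretic step is dispatched, everything else is triangle-inequality bookkeeping and a clean choice of the geometric ratio $\lambda$ to meet the target constant $4$.
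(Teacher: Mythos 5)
Your proof is correct and follows essentially the same argument as the paper: the same geometric partition of $\mathcal{F}$ into radius-classes with ratio $3/2$, the same inductive greedy choice of maximal disjoint subfamilies, and the same triangle-inequality computation yielding the factor $1 + 2\cdot(3/2) = 4$. The only cosmetic differences are that you keep the ratio $\lambda$ abstract until the end and you make the appeal to Zorn's lemma explicit, whereas the paper takes the existence of maximal disjoint subcollections for granted.
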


\begin{proof}
Let $R := \sup \{ \mathrm{rad} (B) | B \in \mathcal{F} \}$ and define
\[
\mathcal{F}_{n} := \left\{ B \in \mathcal{F} \bigm| \mathrm{rad} (B) \in ( (3/2 )^{-n-1} R,~(3/2 )^{-n} R~] \right\}~(0 \le n \in \mathbb{N}).
\]
Here, $\mathrm{rad}(B)$ denotes the radius of $B$ in the metric space.
$\mathcal{G}_{n} \subset \mathcal{F}_{n}$ and $\mathcal{H}_{n} \subset \mathcal{F}_{n}$ are defined inductively as follows:

\noindent
$\mathcal{H}_{0} := \mathcal{F}_{0}$ and $\mathcal{G}_{0} := \{ \mathrm{the~maximal~disjoint~subcollection~of}~ \mathcal{H}_{0} \}.$

\noindent
Assuming $\mathcal{G}_{0}, \mathcal{G}_{1}, \cdots, \mathcal{G}_{n}$ have been defined, let
\[
\mathcal{H}_{n+1} := \left\{ B \in \mathcal{F}_{n+1} \biggm| B \cap C = \emptyset,~\forall C \in \mathcal{G}_{0} \bigcup \cdots \bigcup \mathcal{G}_{n} \right\}
\]
and $\mathcal{G}_{n+1}$ be the maximal disjoint subcollection of $\mathcal{H}_{n+1}.$
Then
\[
\mathcal{G} := \bigcup^{\infty}_{n=0} \mathcal{G}_{n} \subset \mathcal{F}
\]
satisfies the requirements.
Indeed, let $n$ be such that $B$ belongs to $\mathcal{F}_{n}.$
Either $B$ does not belong to $\mathcal{H}_{n},$ which implies $n > 0$ and means that $B$ intersencts a ball from the
union of $\mathcal{G}_{0}, \cdots, \mathcal{G}_{n-1},$ 
or $B \in \mathcal{H}_{n}$ and by maximality of $\mathcal{G}_{n},$ $B$ intersects a ball in $\mathcal{G}_{n}.$
In any case, $B$ intersect a ball $C$ that belongs to the union of $\mathcal{G}_{0}, \cdots, \mathcal{G}_{n}.$
Such a ball $C$ has radius $> (\frac{3}{2})^{-n-1} R.$
Since the radius of $B$ is $\le (\frac{3}{2})^{-n} R,$
we obtain that $B \subset (1 + 2 \cdot (\frac{3}{2})) C = 4 \cdot C.$
\end{proof}

\subsection*{Acknowledgement}
~~I would like to thank my supervisor Kazuo Akutagawa for helpful conversations and support.
I would also like to thank the referee for pointing out the lack of explanation and typos in the first draft of this paper.

\bigskip
\textit{E-mail adress}:~a19.fg4w@g.chuo-u.ac.jp

\textsc{Department Of Mathematics, Chuo University, Tokyo 112-8551, Japan}

\end{document}